 \newtheorem{Theorem}{Theorem}[section]
 \newtheorem{Corollary}[Theorem]{Corollary}
 \newtheorem{Lemma}[Theorem]{Lemma}
 \newtheorem{Proposition}[Theorem]{Proposition}
 \newtheorem{Definition}[Theorem]{Definition}
 \newtheorem{Conjecture}[Theorem]{Conjecture}
 \newtheorem{Remark}[Theorem]{Remark}
 \numberwithin{equation}{section}
\begin{document} 

\title[A generalization of the conjugate Hardy $H^2$ spaces]
 {A generalization of the conjugate Hardy $H^2$ spaces}

\author{Qi'an Guan}
\address{Qi'an Guan: School of
Mathematical Sciences, Peking University, Beijing 100871, China.}
\email{guanqian@math.pku.edu.cn}

\author{Zheng Yuan}
\address{Zheng Yuan: Institute of Mathematics, Academy of Mathematics and Systems
Science, Chinese Academy of Sciences, Beijing 100190, China.}
\email{yuanzheng@amss.ac.cn}

\thanks{}

\subjclass[2020]{30H10, 32A10, 32D15, 32W05, 30H20}

\keywords{Ohsawa-Saitoh-Hardy $H^2$ spaces, conjugate Hardy $H^2$ spaces, minimal $L^2$ integrals, conjugate Hardy $H^2$ kernels, Bergman kernels}

\date{}

\dedicatory{}

\commby{}

%%% ----------------------------------------------------------------------

\begin{abstract}
In this article, we consider a generalization of the conjugate Hardy $H^2$ spaces, and give some properties of the minimal norm of the generalization and some relations between the norm of the generalization and the minimal $L^2$ integrals. As applications, we give some monotonicity results for the conjugate Hardy $H^2$ kernels and the Bergman kernels on planar regions, and some relations between the conjugate Hardy $H^2$ kernels and the Bergman kernels on planar regions.
\end{abstract}

%%% ----------------------------------------------------------------------
\maketitle
%%% ----------------------------------------------------------------------

\section{Introduction}

Let $D$ be a planar region bounded by finite analytic Jordan curves.  
Let $z_0\in D$, and $G_{D}(\cdot,z_0)$ be the Green function on $D$. The conjugate Hardy $H^2$ space $H^{2}_{(c)}(D)$ (see \cite{saitoh}) is defined as the set of all holomorphic functions $f(z)$ on $D$ such that the subharmonic functions $|f(z)|^2$ have harmonic majorants $U(z)$: 
$$|f(z)|^2\le U(z)\,\, \text{on}\,\,D.$$
Each function $f(z)\in H^{2}_{(c)}(D)$ has Fatou's nontangential boundary value a.e. on $\partial D$ belonging to $L^2(\partial D)$ (see \cite{duren}). We recall the norm on $H^{2}_{(c)}(D)$ (see \cite{saitoh}):
$$\|f\|_{H^{2}_{(c)}(D)}:=\left(\frac{1}{2\pi}\int_{\partial D}|f(z)|^2\left(\frac{\partial G_D(z,z_0)}{\partial v_z}\right)^{-1}|dz|\right)^{\frac{1}{2}}$$
for any $f\in H^{2}_{(c)}(D)$, where  $\partial/\partial v_z$ denotes the derivative along the outer normal unit vector $v_z$. 

The conjugate Hardy $H^2$ kernel (see \cite{saitoh}) was defined by
\begin{equation}
	\nonumber
	\hat K_{D}(z_0):=\frac{1}{\inf\left\{\|f\|^2_{H^2_{(c)}(D)}:f\in H^2_{(c)}(D)\,\&\,f(z_0)=1\right\}}.\end{equation}
	Saitoh posed the following conjecture for the conjugate Hardy $H^2$ kernel (see \cite{saitoh,yamada}):
	
\begin{Conjecture}\label{conj1}
		If $D$ is not simple connected, then $\hat K_{D}(z_0)>\pi B_D(z_0)$, where $B_D(z_0)$ is the Bergman kernel on $D$.
\end{Conjecture}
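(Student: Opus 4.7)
The plan is to realize $1/B_D(z_0)$ and $\pi/\hat K_D(z_0)$ as two values extracted from a single concave function coming from a one-parameter family of minimal $L^2$ integrals tied to the Green function, and then to use the linearity (rigidity) case of the concavity to upgrade a weak inequality to a strict one. For $t\ge 0$ set
\[
G(t):=\inf\Big\{\int_{\{2G_D(\cdot,z_0)<-t\}}|f|^2\,d\lambda\;:\;f\in\mathcal{O}(\{2G_D(\cdot,z_0)<-t\}),\;f(z_0)=1\Big\}.
\]
Then $G(0)=1/B_D(z_0)$. By the Guan--Zhou concavity theorem for minimal $L^2$ integrals associated to a psh weight with logarithmic singularity (applied with $\psi:=2G_D(\cdot,z_0)$), the function $h(r):=G(-\log r)$ is concave on $(0,1]$, with $h(0^+)=0$.

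The next step is a coarea identification of the conjugate Hardy norm. Since $G_D(\cdot,z_0)$ vanishes on $\partial D$, $|\nabla(2G_D)|=2\,\partial G_D/\partial v_z$ there, and the coarea formula gives for every $f\in\mathcal{O}(D)$
\[
\lim_{t\to 0^+}\frac{1}{t}\int_{\{-t<2G_D<0\}}|f|^2\,d\lambda=\frac{1}{2}\int_{\partial D}\frac{|f|^2}{\partial G_D/\partial v_z}|dz|=\pi\|f\|^2_{H^{2}_{(c)}(D)}.
\]
Applying this with $f=f_0^*$, the Bergman extremal at $z_0$: since $f_0^*$ is admissible for $G(t)$ for all $t\ge 0$, restriction gives $G(t)\le\int_{\{2G_D<-t\}}|f_0^*|^2$, hence $G(t)-G(0)\le-\int_{\{-t<2G_D<0\}}|f_0^*|^2$, and dividing by $t$ and passing to the limit yields $G'(0^+)\le-\pi\|f_0^*\|^2_{H^{2}_{(c)}(D)}\le-\pi/\hat K_D(z_0)$ (the last inequality since $f_0^*(z_0)=1$ is admissible for $\hat K_D(z_0)$). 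On the other hand, concavity of $h$ with $h(0)=0$ gives $-G'(0^+)=h'(1^-)\le h(1)=1/B_D(z_0)$. Chaining,
\[
\pi/\hat K_D(z_0)\le -G'(0^+)\le 1/B_D(z_0),
\]
which already yields the weak inequality $\hat K_D(z_0)\ge\pi B_D(z_0)$.

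To upgrade to strict inequality when $D$ is not simply connected, invoke the characterization of the equality case in the Guan--Zhou concavity. In the framework developed by the present authors, $h$ is linear on $[0,1]$ precisely when a rigidity condition on the weight $\psi=2G_D(\cdot,z_0)$ holds; on a planar region bounded by finitely many analytic Jordan curves, this rigidity should be equivalent to the multivalued harmonic conjugate of $G_D(\cdot,z_0)$ being single-valued on $D$, i.e.\ to $D$ being simply connected. Hence for multiply connected $D$, $h$ is not linear, the right inequality above becomes strict, and $\hat K_D(z_0)>\pi B_D(z_0)$. The decisive step, and presumably the reason the paper introduces a generalization of $H^{2}_{(c)}(D)$, is precisely this rigidity analysis: one must classify the extremals for the generalized minimal $L^2$ problem and show that on multiply connected planar regions none of them can realize the linear profile $h(r)=r/B_D(z_0)$. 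The relations between the generalized norm and the minimal $L^2$ integrals announced in the abstract should be tailored to carry out exactly that classification.
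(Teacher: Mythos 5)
Your first half is sound and matches the easy part of the paper's argument: with $\psi=2G_D(\cdot,z_0)$, $c\equiv 1$, the identification $\lim_{t\to0+}\frac1t\int_{\{-t<\psi<0\}}|f|^2d\lambda=\pi\|f\|^2_{H^2_{(c)}(D)}$ (the paper's Proposition 2.16) together with the admissibility of the Bergman extremal gives $\pi/\hat K_D(z_0)\le -G'_+(0)$, and concavity of $r\mapsto G(-\log r)$ with limit $0$ at $r=0$ gives $-G'_+(0)\le G(0)=1/B_D(z_0)$; this is exactly the weak inequality $\hat K_D(z_0)\ge\pi B_D(z_0)$ obtained in the paper's Remark 1.9 (there by integrating the monotonicity of $\hat K_{D_t}(z_0)e^{-t}$ over all $t$, but the content is the same). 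Your observation that equality forces $h(r)=G(-\log r)$ to be linear on $(0,1]$ is also correct.

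The genuine gap is the decisive step, which you state but do not prove: that linearity of $h$ (equivalently, equality $\hat K_D(z_0)=\pi B_D(z_0)$) forces $D$ to be simply connected. You assert that the rigidity "should be equivalent" to the single-valuedness of the harmonic conjugate of $G_D(\cdot,z_0)$ and that "one must classify the extremals", which is precisely the content that has to be supplied and is where all the work lies. In the paper this is Corollary 1.8 together with Theorem 1.5, Proposition 2.21 and Lemma 2.7: from the equality case one first extracts a single holomorphic function $F$ with $F(z_0)=1$ realizing the minimal $L^2$ integrals $G(t)=\int_{\{2G_D<-t\}}|F|^2d\lambda$ for every $t\ge0$ (this already needs the new machinery relating the Ohsawa--Saitoh--Hardy norms $H(t)$ to $-G'_+(t)$, not just concavity at $t=0$); then, by the uniqueness of the minimizer on a small sublevel set, $F$ is identified with a constant multiple of $dp_*(f_{z_0})/dz$, where $p:\Delta\to D$ is the universal covering and $|f_{z_0}|=p^*(e^{G_D(\cdot,z_0)})$; finally single-valuedness of $p_*(f_{z_0})$ on $D$ rules out multiple connectivity (Suita's period argument: the periods of the multivalued conjugate $\widetilde G_D$ around the $n\ge2$ boundary components equal $2\pi\omega_j(z_0)\in(0,2\pi)$, so they cannot all lie in $2\pi\mathbb{Z}$). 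Note also that your proposed rigidity condition is not quite the right one: what must be single-valued is $p_*\bigl(e^{G_D+i\widetilde G_D}\bigr)$, i.e.\ the periods of $\widetilde G_D$ must lie in $2\pi\mathbb{Z}$, not vanish. Without carrying out this extremal classification (or invoking, as Guan's original proof does, the equality part of the Suita conjecture), the strict inequality for multiply connected $D$ does not follow from your argument.
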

In \cite{guan-19saitoh}, Guan gave an affirmative answer to the above conjecture  by using a concavity  property for minimal $L^2$ integrals (see \cite{ber-lem,guan_sharp}) and a solution of the equality part of Suita conjecture (see \cite{guan-zhou13ap}). Some recent results about general versions  of Saitoh's conjecture can be referred to \cite{GY-weightedsaitoh,GY-saitohprodct,GY-saitohfinite}.

In \cite{Ohsawa6}, Ohsawa gave an extension theorem involving the Hardy $H^2$ space as a limiting case for the weighted Bergman spaces.

Inspired by Saitoh's work in \cite{saitoh} and Ohsawa's work in \cite{Ohsawa6}, we consider a generalization of the conjugate Hardy $H^2$ spaces on complex manifolds in this article.

Let $M$ be an $n-$dimensional complex manifold, and let $K_{M}$ be the canonical line bundle on $M$. Let $\psi<0$ be a plurisubharmonic function on $M$, and let $\rho\ge0$ be a Lebesgue measurable function on $M$.

\begin{Definition}	The generalization of the conjugate Hardy $H^2$ space  is defined as the set of all $f\in H^0(M,\mathcal{O}(K_M))$ such that
	$$\liminf_{t\rightarrow0+0}\frac{1}{t}\int_{\{-t<\psi<0\}}|f|^2\rho<+\infty,$$
	which is called Ohsawa-Saitoh-Hardy space and denoted by  $OSH^2_{\rho}(M,\psi)$.
	 We define a norm on the space $OSH^2_{\rho}(M,\psi)$ as follows: 
	$$\|f\|_{OSH^2_{\rho}(M,\psi)}:=\left(\liminf_{t\rightarrow0+0}\frac{1}{t}\int_{\{-t<\psi<0\}}|f|^2\rho\right)^{\frac{1}{2}}$$ 
	for any $f\in OSH^2_{\rho}(M,\psi)$, where $|f|^2:=\sqrt{-1}^{n^2}f\wedge\bar f$. 
	\end{Definition}

	If $M=D$ is a planar region bounded by finite analytic Jordan curves and $\rho\equiv1$, the Ohsawa-Saitoh-Hardy space $OSH^2(M,\psi)$ is $\{fdz:f\in H^2_{(c)}(D)\}$ and $\|fdz\|^2_{OSH^2(M,\psi)}=2\pi\|f\|^2_{H^2_{(c)}(D)}$ for any $f\in H^2_{(c)}(D)$  (for the proof, see  Proposition \ref{l:OSH=H}). Thus, the Ohsawa-Saitoh-Hardy space $OSH^2_{\rho}(M,\psi)$ is a generalization of the conjugate Hardy $H^2$ spaces.

 Let $X$ and $Z$ be closed subsets of $M$, and assume that $(M,X,Z)$  satisfies the following conditions:

$\uppercase\expandafter{\romannumeral1}.$ $X$ is a closed subset of $M$ and $X$ is locally negligible with respect to $L^2$ holomorphic functions, i.e., for any local coordinated neighborhood $U\subset M$ and for any $L^2$ holomorphic function $f$ on $U\backslash X$, there exists an $L^2$ holomorphic function $\tilde{f}$ on $U$ such that $\tilde{f}|_{U\backslash X}=f$ with the same $L^2$ norm;

$\uppercase\expandafter{\romannumeral2}.$ $Z$ is an analytic subset of $M$ and $M\backslash (X\cup Z)$ is a weakly pseudoconvex K\"ahler manifold.

\

 Let $\varphi$ be a Lebesgue measurable function on $M$, such that $\psi+\varphi$ is a plurisubharmonic function on $M$. 
\begin{Definition}
We call a positive measurable function $c$ on $(0,+\infty)$ in class $P_{0,M}$ if the following two statements hold:
\par
$(1)$ $c(t)e^{-t}$ is decreasing with respect to $t$;
\par
$(2)$ there is a closed subset $E$ of $M$ such that $E\subset Z\cap \{\psi(z)=-\infty\}$ and for any compact subset $K\subset M\backslash E$, $e^{-\varphi}c(-\psi)$ has a positive lower bound on $K$.
\end{Definition}

 Let $Z_0$ be a subset of $\{\psi=-\infty\}$ such that $Z_0 \cap
Supp(\mathcal{O}/\mathcal{I}(\varphi+\psi))\neq \emptyset$. Let $U \supset Z_0$ be
an open subset of $M$, and let $f$ be a holomorphic $(n,0)$ form on $U$. Let $\mathcal{F}_{z_0} \supset \mathcal{I}(\varphi+\psi)_{z_0}$ be an ideal of $\mathcal{O}_{z_0}$ for any $z_{0}\in Z_0$. Let $c\in P_{0,M}$ such that $\int_0^{+\infty}c(t)e^{-t}dt<+\infty$.

Denote
\begin{equation}\nonumber
\begin{split}
\inf\Bigg\{\int_{ \{ \psi<-t\}}|\tilde{f}|^2e^{-\varphi}c(-\psi): \tilde{f}\in
H^0(\{\psi<-t\},\mathcal{O}(K_M)) \\
\&\, (\tilde{f}-f)\in
H^0(Z_0 ,(\mathcal{O} (K_M) \otimes \mathcal{F})|_{Z_0})\Bigg\}
\end{split}
\end{equation}
by $G(t)$, where $t\in[0,+\infty)$  and $(\tilde{f}-f)\in
H^0(Z_0 ,(\mathcal{O} (K_M) \otimes \mathcal{F})|_{Z_0})$ means $(\tilde{f}-f,z_0)\in(\mathcal{O}(K_M)\otimes \mathcal{F})_{z_0}$ for all $z_0\in Z_0$.

 Assume that there exists $t \in [0,+\infty)$ satisfying that $G(t)<+\infty$. Theorem \ref{thm:concave} shows that $G(h^{-1}(r))$ is concave on $[0,\int_0^{+\infty}c(t)e^{-t}]$, then the right-hand derivative $G'_{+}(t):=\lim_{t_1\rightarrow0+0}\frac{G(t+t_1)-G(t)}{t_1}$ exists and is finite for any $t\ge0$.

Let $\rho=e^{-\varphi}c(-\psi)$ on $M$. Denote that $M_t:=\{\psi<-t\}$ and $\psi_t=\psi+t$ for any $t\ge0$. Let $F_t$ be the unique holomorphic $(n,0)$ form on $M_t$ (see Lemma \ref{lem:A}) satisfying that $(F_t-f)\in
H^0(Z_0 ,(\mathcal{O} (K_M) \otimes \mathcal{F})|_{Z_0})$ and $G(t)=\int_{ \{ \psi<-t\}}|F_t|^2e^{-\varphi}c(-\psi)$. By definitions, we have $F_t\in OSH_{\rho}^2(M_t,\psi_t)$ and 
\begin{equation}
	\label{eq:0522a}\|F_t\|_{OSH_{\rho}^2(M_t,\psi_t)}^2=\liminf_{B\rightarrow0+0}\frac{\int_{\{-t-B<\psi<-t\}}|F_t|^2e^{-\varphi}c(-\psi)}{B}\leq-G'_{+}(t).
\end{equation}

	Denote that $c_+(t)=\lim_{s\rightarrow t+0}c(s)$. The concavity of $G(h^{-1}(r))$ shows that $\frac{e^{t}}{c_+(t)}G'_+(t)$ is decreasing on $[0,+\infty)$. We give some properties for $\|F_t\|_{OSH_{\rho}^2(M_t,\psi_t)}^2$ as follows:  
	
	\begin{Theorem}
	\label{thm1}
	The following three statements hold:
	
	$(1)$
	 $\frac{e^{t}}{c_+(t)}\|F_t\|_{OSH_{\rho}^2(M_t,\psi_t)}^2$ is increasing on $[0,+\infty)$; 
	 
	 $(2)$ $\|F_t\|_{OSH_{\rho}^2(M_t,\psi_t)}^2=-G'_{+}(t)$
	 holds for a.e. $t\in(0,+\infty)$;
	 
	 $(3)$ when functions $G'_+$ and $c$ are both continuous at $t\in(0,+\infty)$, equality 
	$$\|F_t\|_{OSH_{\rho}^2(M_t,\psi_t)}^2=-G'_{+}(t)$$
	 holds. 
	\end{Theorem}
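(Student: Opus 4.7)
The plan rests on establishing the two-sided sandwich
\[
\tilde G'_+(h(t)) \;\le\; A(t) \;\le\; \tilde G'_-(h(t)),
\]
where $A(t):=\tfrac{e^t}{c_+(t)}\|F_t\|^2_{OSH^2_\rho(M_t,\psi_t)}$, $h(t):=\int_t^{+\infty}c(s)e^{-s}\,ds$, and $\tilde G(r):=G(h^{-1}(r))$. All of (1), (2), (3) will then drop out by combining the sandwich with the concavity of $\tilde G$ (Theorem~\ref{thm:concave}) and the chain-rule identity $\tilde G'_\mp(h(t))=\tfrac{e^t(-G'_\pm(t))}{c_\pm(t)}$.

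\emph{Upper bound $A(t)\le \tilde G'_-(h(t))$.} Since both $F_t|_{M_{t+B}}$ and $F_{t+B}$ lie in the affine candidate class defining $G(t+B)$, Pythagoras in $L^2(M_{t+B},\,e^{-\varphi}c(-\psi))$ yields
\[
G(t)-G(t+B) \;=\; \int_{\{-t-B\le\psi<-t\}}|F_t|^2 e^{-\varphi}c(-\psi) \;+\; \|F_t-F_{t+B}\|^2_{L^2(M_{t+B},\rho)}.
\]
Dividing by $B$ and taking $\liminf_{B\to 0+}$, the non-negative discrepancy term only decreases the $\liminf$, giving exactly (\ref{eq:0522a}), equivalently $A(t)\le \tilde G'_-(h(t))$.

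\emph{Lower bound $\tilde G'_+(h(t))\le A(t)$.} Here I invoke the sharp Ohsawa--Takegoshi-type $L^2$-extension implicit in Theorem~\ref{thm:concave}: for $t_1<t_2$, extend $F_{t_2}$ from $M_{t_2}$ to a holomorphic form $\widetilde F$ on $M_{t_1}$ satisfying $\int_{M_{t_1}}|\widetilde F|^2 e^{-\varphi}c(-\psi) \le G(t_2) + A(t_2)\bigl(h(t_1)-h(t_2)\bigr)$, so that $\widetilde F$ is a valid competitor for $G(t_1)$. The resulting chord inequality $\tfrac{G(t_1)-G(t_2)}{h(t_1)-h(t_2)}\le A(t_2)$, in the limit $t_1\to t_2^-$ (so $h(t_1)\to h(t_2)^+$), yields $\tilde G'_+(h(t_2))\le A(t_2)$. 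Producing this sharp extension with the correct constant is the main technical obstacle and is in effect the infinitesimal dual of the concavity of $\tilde G$; I expect to extract it from the same $\bar\partial$/Ohsawa--Takegoshi machinery that established Theorem~\ref{thm:concave}.

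\emph{Deducing (1), (2), (3).} For (1): for $t_1<t_2$, concavity of $\tilde G$ (with $\tilde G'$ decreasing) gives the standard cross-estimate $\tilde G'_-(h(t_1))\le \tilde G'_+(h(t_2))$ whenever $h(t_1)>h(t_2)$; hence $A(t_1)\le\tilde G'_-(h(t_1))\le\tilde G'_+(h(t_2))\le A(t_2)$. For (3): when $G'_+$ and $c$ are both continuous at $t$, the relation $\lim_{s\to t^-}G'_+(s)=G'_-(t)$ (which holds in general by the chain-rule formulas and the left-continuity of $\tilde G'_-$) combined with continuity of $G'_+$ forces $G'_-(t)=G'_+(t)$; then $\tilde G'_-(h(t))=\tilde G'_+(h(t))$ and the sandwich collapses to $A(t)=\tilde G'_-(h(t))$, i.e.\ $\|F_t\|^2=-G'_+(t)$. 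For (2): the concavity of $\tilde G$ confines the discontinuities of $\tilde G'$ (hence of $G'_+$ via the chain rule) to a countable set, and the monotonicity of $c(s)e^{-s}$ similarly confines the discontinuities of $c$, so the hypothesis of (3) holds on a full-measure subset of $(0,+\infty)$.
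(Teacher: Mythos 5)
Your architecture coincides with the paper's: your upper bound $A(t)\le\tilde G'_-(h(t))$ is exactly inequality \eqref{eq:0522a} (and your Pythagoras derivation of it via Lemma \ref{lem:A} is correct, though the trivial estimate $G(t+B)\le\int_{M_{t+B}}|F_t|^2e^{-\varphi}c(-\psi)$ already suffices), your lower bound is precisely the chord inequality of the paper's Proposition \ref{p:1}, and your deduction of (1), (2), (3) from the two bounds plus concavity of $G(h^{-1}(r))$ --- the cross-estimate for (1), the collapse of the sandwich at common continuity points of $G'_+$ and $c$ for (3) (using $\lim_{s\to t-0}c_+(s)=c_-(t)$), and the countability of the discontinuities of $\tilde G'$ and of the monotone function $c(t)e^{-t}$ for (2) --- is the same chain as the paper's \eqref{eq:0610b}.

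The genuine gap is the lower bound itself, which you explicitly defer (``I expect to extract it from the same machinery''). That step is the actual content of the theorem and is not a formal consequence of Theorem \ref{thm:concave}: concavity only yields \eqref{eq:0716a}, i.e.\ the chord bound with $-G'_+(t_0)$, and a priori $\|F_{t_0}\|^2_{OSH^2_\rho(M_{t_0},\psi_{t_0})}$ could be strictly smaller than $-G'_+(t_0)$, being merely a liminf of boundary-strip averages. The paper proves the needed extension in Proposition \ref{p:1}: for each $B>0$, Lemma \ref{lem:GZ_sharp2} produces $\tilde F_B$ on $\{\psi<-t_1\}$ with error controlled by $\frac{e^{t_0+B}\int_{t_1}^{t_0+B}c(t)e^{-t}dt}{\inf_{(t_0,t_0+B)}c}\cdot\frac1B\int_{\{-t_0-B<\psi<-t_0\}}|F_{t_0}|^2e^{-\varphi}c(-\psi)$ (after converting $e^{-\varphi-\psi+v_{t_0,B}(\psi)}c(-v_{t_0,B}(\psi))\ge e^{-\varphi}c(-\psi)$); one then chooses $B_k\to0$ realizing the liminf defining the $OSH$-norm, extracts a limit extension $F_1$ via the compactness Lemma \ref{l:converge} (this is where $c\in P_{0,M}$ enters), preserves the condition $(F_1-F_{t_0})\in H^0(Z_0,(\mathcal{O}(K_M)\otimes\mathcal{F})|_{Z_0})$ by Lemma \ref{closedness}, splits the limiting estimate by Fatou into $\int_{\{-t_0\le\psi<-t_1\}}|F_1|^2e^{-\varphi}c(-\psi)+\int_{\{\psi<-t_0\}}|F_1-F_{t_0}|^2e^{-\varphi}c(-\psi)$, and finally uses the Pythagoras identity of Lemma \ref{lem:A} to bound $G(t_1)$ by $G(t_0)$ plus the error, giving the chord inequality whose infinitesimal form is your $\tilde G'_+(h(t))\le A(t)$. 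Until this limiting argument (or an equivalent one) is carried out, the lower half of the sandwich --- and hence all three statements --- is unproven; the upper half alone only reproves \eqref{eq:0522a}.
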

	
Denote that 
	\begin{equation}\nonumber
\begin{split}
H(t):=\inf\Bigg\{\|\tilde f\|_{OSH_{\rho}^2(M_t,\psi_t)}^2: \tilde{f}\in
H^0(\{\psi<-t\},\mathcal{O}(K_M)) \\
\&\, (\tilde{f}-f)\in
H^0(Z_0 ,(\mathcal{O} (K_M) \otimes \mathcal{F})|_{Z_0})\Bigg\}
\end{split}
\end{equation}	
for any $t\ge0$.

Following from inequality \eqref{eq:0522a}, we have
\begin{equation}
	\label{eq:0718a}
	H(t)\le -G'_{+}(t)
\end{equation}
 for any $t\ge0$. We gives a necessary and sufficient condition
 for $H(t)=-G'_{+}(t)$ for a.e. $t\in(0,+\infty)$.

\begin{Theorem}\label{thm2}
	The following two statements are equivalent:
	
	$(1)$ $H(t)=-G'_{+}(t)$ holds for a.e. $t\in(0,+\infty)$;
	
	$(2)$ there exists $F\in H^0(M,\mathcal{O}(K_M))$ such that $(F-f)\in
H^0(Z_0 ,(\mathcal{O} (K_M) \otimes \mathcal{F})|_{Z_0})$, $G(t)=\int_{\{\psi<-t\}}|F|^2e^{-\varphi}c(-\psi)$ for any $t\in[0,+\infty)$ and 
$H(t)=\|F\|_{OSH_{\rho}^2(M_t,\psi_t)}^2$ for a.e. $t\in(0,+\infty)$.
\end{Theorem}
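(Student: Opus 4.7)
The plan is to prove the two implications separately. The direction $(2)\Rightarrow(1)$ follows quickly from Theorem~\ref{thm1}. Given $F$ as in (2), the restriction $F|_{M_t}$ belongs to the admissible class defining $G(t)$ (the condition at $Z_0$ is inherited since $Z_0\subset\{\psi=-\infty\}\subset M_t$) and attains the infimum by hypothesis, so the uniqueness in Lemma~\ref{lem:A} forces $F|_{M_t}=F_t$. Then Theorem~\ref{thm1}(2) yields $\|F\|_{OSH^2_{\rho}(M_t,\psi_t)}^2=\|F_t\|_{OSH^2_{\rho}(M_t,\psi_t)}^2=-G'_{+}(t)$ for a.e.\ $t\in(0,+\infty)$; combined with the a.e.\ equality $H(t)=\|F\|_{OSH^2_{\rho}(M_t,\psi_t)}^2$ from (2), this gives $H(t)=-G'_{+}(t)$ a.e.

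The substantive direction $(1)\Rightarrow(2)$ rests on a compatibility claim: for every $0\le t_1\le t_2$ with $G(t_1)<+\infty$, one has $F_{t_1}|_{M_{t_2}}=F_{t_2}$. Set $g(s):=\int_{M_s}|F_{t_1}|^2e^{-\varphi}c(-\psi)$ for $s\ge t_1$. Then $g$ is non-increasing, $g(t_1)=G(t_1)$, and $g(s)\ge G(s)$ because $F_{t_1}|_{M_s}$ is admissible for $G(s)$. Since
\begin{equation*}
g(s)-g(s+B)=\int_{\{-s-B\le\psi<-s\}}|F_{t_1}|^2e^{-\varphi}c(-\psi),
\end{equation*}
at every point of differentiability of $g$ (so a.e.\ $s$) we have $\|F_{t_1}\|^2_{OSH^2_{\rho}(M_s,\psi_s)}=-g'(s)$. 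Because $F_{t_1}|_{M_s}$ is admissible for $H(s)$, we have $H(s)\le -g'(s)$, and hypothesis (1) then gives $-G'_{+}(s)\le -g'(s)$ a.e. Applying the monotone-function inequality $g(s)-g(t_1)\le\int_{t_1}^{s}g'(u)\,du$ together with the absolute continuity of $G$ (from the concavity of $G\circ h^{-1}$ ensured by Theorem~\ref{thm:concave}), one obtains
\begin{equation*}
g(s)-G(s)\le g(t_1)-G(t_1)=0,
\end{equation*}
so $g(s)=G(s)$. Lemma~\ref{lem:A} now yields $F_{t_1}|_{M_{t_2}}=F_{t_2}$.

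This compatibility permits gluing: using $M=\bigcup_{t>0}M_{t}$ together with the finiteness of $G(t)$ for all sufficiently small $t>0$ (built into the concavity setup yielding finite $G'_{+}$ on $[0,+\infty)$), define $F(x):=F_{t}(x)$ for any small $t>0$ with $x\in M_{t}$ and $G(t)<+\infty$; the compatibility claim ensures well-definedness. The resulting $F$ is holomorphic on $M$, the condition $(F-f)\in H^0(Z_0,(\mathcal{O}(K_M)\otimes\mathcal{F})|_{Z_0})$ is inherited from each $F_t$, and $\int_{M_t}|F|^2e^{-\varphi}c(-\psi)=G(t)$ for every $t\ge 0$ by construction. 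Finally, $F|_{M_t}=F_t$ combined with Theorem~\ref{thm1}(2) and hypothesis (1) gives $\|F\|^2_{OSH^2_{\rho}(M_t,\psi_t)}=-G'_{+}(t)=H(t)$ for a.e.\ $t$, establishing (2). I expect the main obstacle to be the clean execution of the monotone-function calculus---in particular, upgrading the a.e.\ inequality $g'(s)\le G'_{+}(s)$ to the integrated inequality $g\le G$---which hinges on the absolute continuity of $G$ supplied by the concavity result of Theorem~\ref{thm:concave}.
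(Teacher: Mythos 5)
Your proof is correct, and both directions ultimately rest on the same analytic core as the paper's proof: compare the mass function of a fixed candidate with $G$, bound its derivative below by $H=-G'_{+}$ a.e., integrate using the absolute continuity of $G$ coming from Theorem~\ref{thm:concave}, and invoke the uniqueness part of Lemma~\ref{lem:A}. The organization of $(1)\Rightarrow(2)$ differs, though: the paper works top-down with the global minimizer $F_0$ on $M$ (which exists since $G(0)<+\infty$ in the standing setup), showing in one stroke via $G(0)=I(0)\ge\int_0^{+\infty}-I'\ge\int_0^{+\infty}H=\int_0^{+\infty}-G'_{+}=G(0)$ that $I\equiv G$ and hence $F_0|_{M_t}=F_t$, so no gluing is needed; you instead never use $F_0$, proving the pairwise compatibility $F_{t_1}|_{M_{t_2}}=F_{t_2}$ by the localized inequality $g(s)-g(t_1)\le\int_{t_1}^{s}g'\le\int_{t_1}^{s}G'_{+}=G(s)-G(t_1)$ and then gluing over $M=\bigcup_{t>0}M_t$. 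Your version buys a statement (restriction compatibility of all the $F_t$'s) that is of independent interest and avoids privileging $t=0$ until the very end, at the cost of an extra gluing step and of still needing $\lim_{t\to 0+}G(t)=G(0)$ to get the $t=0$ case of $G(t)=\int_{\{\psi<-t\}}|F|^2e^{-\varphi}c(-\psi)$ (a one-line monotone convergence argument you should make explicit). Two cosmetic points: your claim $\|F_{t_1}\|^2_{OSH^2_{\rho}(M_s,\psi_s)}=-g'(s)$ at differentiability points should be an inequality $\le$ (the level set $\{\psi=-s-B\}$ may carry mass since the open strip is used in the $OSH$ norm), but only the inequality $H(s)\le -g'(s)$ is used, so nothing breaks; and in $(2)\Rightarrow(1)$ your detour through Theorem~\ref{thm1}(2) is fine and in fact slightly cleaner than the paper's direct differentiation of $G$.
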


Denote that 
\begin{displaymath}
	\begin{split}
		\mathcal{H}^2(M_t,\rho):=\Bigg\{\tilde f:\tilde{f}\in
H^0(\{\psi<-t\},\mathcal{O}(K_M)),\,\int_{\{\psi<-t\}}|\tilde f|^2e^{-\varphi}c(-\psi)<+\infty\Bigg\},
	\end{split}
\end{displaymath}
for $t\ge0$.

Assume that $H(t)$ is Lebesgue measurable on $(0,+\infty)$, then it is clear that $\int_0^{+\infty}H(t)dt\le G(0)$. Denote that $\hat H(t):=\int_t^{+\infty}H(s)ds$. We give a concavity property for $\hat H(t)$ in the following theorem.

\begin{Theorem}
	\label{thm3}Assume that $OSH_{\rho}^2(M_t,\psi_t)\subset \mathcal{H}^2(M_t,\rho)$ for a.e.  $t_0\in(0,+\infty)$. Then $\hat H(h^{-1}(r))$ is  concave with respect to $r\in[0,\int_0^{+\infty}c(t)e^{-t}dt]$, where   $h(t):=\int_t^{+\infty}c(s)e^{-s}ds$ for $t\ge0$. 
\end{Theorem}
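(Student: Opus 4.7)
My plan is to prove that, under the hypothesis, $\hat H(t)=G(t)$ for every $t\ge 0$, so that the conclusion follows immediately from the concavity of $G\circ h^{-1}$ in Theorem \ref{thm:concave}.

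First, I would establish a coarea identity: for every $\tilde f\in \mathcal{H}^2(M_t,\rho)$,
\[
\int_{\{\psi<-t\}}|\tilde f|^2 e^{-\varphi}c(-\psi)=\int_t^{+\infty}\|\tilde f\|^2_{OSH^2_\rho(M_s,\psi_s)}\,ds.
\]
This is Fubini applied to $(x,s)\mapsto |\tilde f(x)|^2 e^{-\varphi(x)}\mathbf{1}_{\{-\psi(x)>s\}}c(s)$, together with the observation that $c$ coincides a.e.\ with its right-continuous envelope $c_+$, since $c(s)e^{-s}$ is monotone and thus has at most countably many discontinuities. Applied to $F_t$, the identity yields $G(t)=\int_t^{+\infty}\|F_t\|^2_{OSH^2_\rho(M_s,\psi_s)}\,ds$, and in particular $\|F_t\|^2_{OSH^2_\rho(M_s,\psi_s)}<+\infty$ for almost every $s\ge t$.

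Under the hypothesis $OSH^2_\rho(M_s,\psi_s)\subset \mathcal{H}^2(M_s,\rho)$ for a.e.\ $s$, this finiteness implies that $F_t|_{M_s}$ is admissible for the $H(s)$-infimum, so $H(s)\le \|F_t\|^2_{OSH^2_\rho(M_s,\psi_s)}$ for a.e.\ $s\ge t$, and integrating gives the upper bound $\hat H(t)\le G(t)$. For the matching lower bound, I would combine Theorem \ref{thm1}(2), which gives $\|F_s\|^2_{OSH^2_\rho(M_s,\psi_s)}=-G'_+(s)$ a.e., with \eqref{eq:0718a}, $H(s)\le -G'_+(s)$, and then upgrade the inequality $H(s)\le -G'_+(s)=\|F_s\|^2_{OSH^2_\rho(M_s,\psi_s)}$ to equality by a convexity/uniqueness argument: under the hypothesis, any near-minimizer of $H(s)$ is a $G$-candidate whose $\mathcal{H}^2$-norm is bounded below by $G(s)$, and the strict convexity of the weighted $L^2$-inner product together with the uniqueness in Lemma \ref{lem:A} forces the OSH-minimizer to coincide (modulo the ideal along $Z_0$) with $F_s$. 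Hence $H(s)=-G'_+(s)$ a.e., whence $\hat H(t)=\int_t^{+\infty}-G'_+(s)\,ds=G(t)-\lim_{s\to+\infty}G(s)=G(t)$.

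Given $\hat H=G$, the concavity of $\hat H(h^{-1}(r))$ on $[0,\int_0^{+\infty}c(t)e^{-t}\,dt]$ is immediate from Theorem \ref{thm:concave}. The principal obstacle is the upgrade of $H(s)\le -G'_+(s)$ to equality: a priori the OSH-infimum could be strictly smaller than $\|F_s\|^2_{OSH^2_\rho(M_s,\psi_s)}$, and the containment $OSH^2_\rho\subset \mathcal{H}^2$ is precisely the ingredient that allows one to transfer the uniqueness of the weighted $L^2$-minimizer (Lemma \ref{lem:A}) to the OSH-setting, via the coarea identity from the first step.
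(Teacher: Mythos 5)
Your proposed route cannot work: the intermediate statement you aim for, $\hat H(t)=G(t)$ for all $t$, is strictly stronger than Theorem \ref{thm3} and is false in general under its hypothesis. Since $\hat H(t)=\int_t^{+\infty}H(s)\,ds$ and $G(t)=\int_t^{+\infty}(-G'_+(s))\,ds$, your claim forces $H(s)=-G'_+(s)$ for a.e.\ $s$, which by Theorem \ref{thm2} is equivalent to the existence of a single holomorphic $(n,0)$ form $F$ that minimizes $G(t)$ for every $t$ simultaneously. Corollary \ref{c:2} shows that this rigidity fails exactly when a planar domain $D$ is not simply connected, whereas the hypothesis $OSH^2_\rho(M_t,\psi_t)\subset\mathcal H^2(M_t,\rho)$ of Theorem \ref{thm3} does hold in that planar setting (a holomorphic function with finite $L^2$ norm on some boundary collar of $D_t$ is $L^2$ on all of $D_t$). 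So for multiply connected $D$ one has $\hat H(t)<G(t)$ for some $t$, yet Theorem \ref{thm3} remains true: it asserts only concavity of $\hat H\circ h^{-1}$, not that it coincides with $G\circ h^{-1}$.

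The concrete step that fails is the ``upgrade'' of $H(s)\le -G'_+(s)$ to an equality. The uniqueness in Lemma \ref{lem:A} concerns minimizers of the weighted $L^2$ norm $\int|\cdot|^2e^{-\varphi}c(-\psi)$, while the $OSH$ norm only records boundary-collar behavior; a candidate admissible for $H(s)$ can have strictly smaller $OSH$ norm than $F_s$ while having larger weighted $L^2$ norm, so no convexity/uniqueness argument transfers from the $L^2$ problem to the $OSH$ problem. (Also, your coarea ``identity'' is in general only the inequality $\int_t^{+\infty}\|\tilde f\|^2_{OSH^2_\rho(M_s,\psi_s)}\,ds\le\int_{\{\psi<-t\}}|\tilde f|^2e^{-\varphi}c(-\psi)$, because the monotone function $s\mapsto\int_{\{\psi<-s\}}|\tilde f|^2e^{-\varphi}c(-\psi)$ need not be absolutely continuous, e.g.\ when $\psi$ has level sets of positive measure; this only affects the direction you do not need, but it should not be asserted as an equality.) The paper's proof avoids identifying $H$ with $-G'_+$ altogether: applying the extension Lemma \ref{lem:GZ_sharp2} to near-minimizers of $H(t_0)$ yields the differential-type inequality $\int_{t_1}^{t_0}H(t)\,dt\le\frac{e^{t_0}}{c(t_0)}H(t_0)\int_{t_1}^{t_0}c(t)e^{-t}\,dt$ for a.e.\ $t_0$ (Proposition \ref{p:2}), and concavity of $\hat H\circ h^{-1}$ then follows from the elementary real-variable Lemma \ref{l:concave}; Theorem \ref{thm:concave} is not invoked for $\hat H$ at all. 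You would need to replace your equality argument by an estimate of this kind.
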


\subsection{Applications: planar regions}\label{sec:appli}

In this section, we consider the case that $M=D$ is a planar region bounded by finite analytic Jordan curves.

Let $z_0\in D$, and $G_{D}(\cdot,z_0)$ be the Green function on $D$. Denote the set of all critical values of $G_{D}(\cdot,z_0)$ by $N\subset(-\infty,0)$. $N\subset\subset(-\infty,0)$ is a discrete set (see Lemma \ref{l:discrete}).
Denote that 
$$D_t:=\{z\in D:2G_D(z,z_0)<-t\}$$ for any $t\ge0$. For any $t\in[0,+\infty)\backslash -2N$, $D_t$ is a planar region bounded by finite analytic Jordan curves, thus the conjugate Hardy $H^2$ space $H^{2}_{(c)}(D_t)$ is well defined. 

We recall the conjugate Hardy $H^2$ kernel (see \cite{saitoh}) 
\begin{equation}
	\label{eq:0715a}\hat K_{D_t}(z_0):=\frac{1}{\inf\left\{\|f\|^2_{H^2_{(c)}(D_t)}:f\in H^2_{(c)}(D_t)\,\&\,f(z_0)=1\right\}},\end{equation}
where $t\in[0,+\infty)\backslash -2N$, $\|f\|^2_{H^2_{(c)}(D_t)}:=\frac{1}{2\pi}\int_{\partial D_t}|f(z)|^2\left(\frac{\partial G_D(z,z_0)}{\partial v_z}\right)^{-1}|dz|$  and $\partial/\partial v_z$ denotes the derivative along the outer normal unit vector $v_z$.
Following from Proposition \ref{l:OSH=H}, for $t\in -2N$, we define that 
$$H^2_{(c)}(D_t):=\left\{f\in\mathcal{O}(D_t):\|f\|^2_{H^2_{(c)}(D_t)}:=\liminf_{B\rightarrow0+0}\frac{\int_{\{-t-B<2G_D(\cdot,z_0)<-t\}}|f|^2d\lambda_D}{B\pi}<+\infty\right\},$$
and $\hat K_{D_t}(z_0)$ is defined as equality \eqref{eq:0715a}, where $d\lambda_D$ is the Lebesgue measure on $D$.

Denote that $B_{D_t}(\cdot,\cdot)$ is the Bergman kernel on $D_t$. Then we have 
$$\frac{2}{B_{D_t}(z_0,z_0)}=\inf\left\{\int_{D_t}|\tilde f|^2:\tilde f\in H^0(D_t,\mathcal{O}(K_D))\,\&\,\tilde f(z_0)=dz\right\}.$$
Note that $\frac{B_{D_t}(\cdot,z_0)}{B_{D_t}(z_0,z_0)}$ is a holomorphic function on $D_t$ satisfying 
$$\int_{D_t}\left|\frac{B_{D_t}(\cdot,z_0)}{B_{D_t}(z_0,z_0)}\right|^2=\inf\left\{\int_{D_t}|\tilde f|^2d\lambda_D:\tilde f\in \mathcal{O}(D_t)\,\&\,\tilde f(z_0)=1\right\},$$
then Theorem \ref{thm1} implies the following result.

\begin{Corollary}
	\label{c:1}
	The following two statements hold:
	
	$(1)$
	$\frac{\|B_{D_t}(\cdot,z_0)\|_{H^2_{(c)}(D_t)}^2}{|B_{D_t}(z_0,z_0)|^2}e^t$ is an increasing function on $[0,+\infty)$;
	
	$(2)$ equality
	$$\pi\|B_{D_t}(\cdot,z_0)\|_{H^2_{(c)}(D_t)}^2=\frac{d}{dt}B_{D_t}(z_0,z_0)$$
	holds a.e. on $(0,+\infty)$.
\end{Corollary}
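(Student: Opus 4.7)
The plan is to specialize Theorem \ref{thm1} to the data $M=D$, $\psi(z):=2G_D(z,z_0)$, $\varphi\equiv 0$, $c\equiv 1$ (so $\rho\equiv 1$), $Z_0=\{z_0\}$, $f:=dz$ on a neighborhood of $z_0$, and $\mathcal{F}_{z_0}:=\mathfrak{m}_{z_0}$. First I verify admissibility: $D$ is a Stein, hence weakly pseudoconvex Kähler, planar region (take $X=Z=\emptyset$); $\psi$ is negative subharmonic on $D$ with logarithmic pole at $z_0$, so $\mathcal{I}(\varphi+\psi)_{z_0}=\mathfrak{m}_{z_0}\subset \mathcal{F}_{z_0}$ and $z_0\in \mathrm{Supp}(\mathcal{O}/\mathcal{I}(\varphi+\psi))$; the constant $c\equiv 1$ lies in $P_{0,M}$ because $c(t)e^{-t}=e^{-t}$ is decreasing and $e^{-\varphi}c(-\psi)\equiv 1$ trivially has a positive lower bound on any compact set; and $\int_0^{+\infty}c(t)e^{-t}dt=1<+\infty$.

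Next I identify $G(t)$ and its minimizer $F_t$. The pointwise computation $|gdz|^2=\sqrt{-1}\,g\bar g\,dz\wedge d\bar z=2|g|^2d\lambda_D$ together with the extremal characterization of the Bergman kernel yields
\begin{equation*}
G(t)=\inf\left\{\int_{D_t}|gdz|^2:g\in\mathcal{O}(D_t),\,g(z_0)=1\right\}=\frac{2}{B_{D_t}(z_0,z_0)},
\end{equation*}
attained uniquely by $F_t=B_{D_t}(z_0,z_0)^{-1}B_{D_t}(\cdot,z_0)\,dz$. Applying the same factor of $2$ to the $OSH^2$-norm and invoking the planar-region definition of $\|\cdot\|_{H^2_{(c)}(D_t)}$ recalled in the text gives
\begin{equation*}
\|F_t\|_{OSH^2_{\rho}(D_t,\psi_t)}^2=\frac{2\pi\,\|B_{D_t}(\cdot,z_0)\|_{H^2_{(c)}(D_t)}^2}{|B_{D_t}(z_0,z_0)|^2},\qquad t\ge 0,
\end{equation*}
uniformly, including at the critical values of $-\psi$ where the $\liminf$ extension was made explicit in the introduction.

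Statement $(1)$ follows directly from Theorem \ref{thm1}$(1)$: since $c_+(t)\equiv 1$, the quantity $e^t\|F_t\|_{OSH^2_{\rho}(D_t,\psi_t)}^2$ is increasing on $[0,+\infty)$, and cancelling the constant $2\pi$ in the displayed formula yields the monotonicity of $e^t\|B_{D_t}(\cdot,z_0)\|_{H^2_{(c)}(D_t)}^2/|B_{D_t}(z_0,z_0)|^2$. For statement $(2)$, Theorem \ref{thm1}$(2)$ gives $\|F_t\|_{OSH^2_{\rho}(D_t,\psi_t)}^2=-G'_+(t)$ for a.e.\ $t\in(0,+\infty)$. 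Since $G(h^{-1}(r))=G(-\log r)$ is concave on $[0,1]$ by Theorem \ref{thm:concave}, $G(t)$ is locally Lipschitz and hence a.e.\ differentiable; differentiating $G(t)=2/B_{D_t}(z_0,z_0)$ yields $-G'_+(t)=2B_{D_t}(z_0,z_0)^{-2}\frac{d}{dt}B_{D_t}(z_0,z_0)$ a.e., and equating with the displayed norm formula and cancelling the common factor $2/|B_{D_t}(z_0,z_0)|^2$ gives precisely $\pi\|B_{D_t}(\cdot,z_0)\|_{H^2_{(c)}(D_t)}^2=\frac{d}{dt}B_{D_t}(z_0,z_0)$.

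The main obstacle here is purely bookkeeping: one must be meticulous about the factor $2$ relating $|gdz|^2$ to $|g|^2d\lambda_D$ and the factor $\pi$ built into the planar-region definition of the conjugate Hardy norm, so that the two constants match up on both sides of $(2)$. A secondary subtlety is that at the critical values $t\in -2N$ of $2G_D(\cdot,z_0)$ the boundary $\partial D_t$ is not smooth and the classical definition of $\|\cdot\|_{H^2_{(c)}(D_t)}$ does not directly apply; fortunately Proposition \ref{l:OSH=H} extends the identification between the $OSH^2$-norm and the $H^2_{(c)}$-norm to all $t$, so the identification (and hence the monotonicity) carries through uniformly.
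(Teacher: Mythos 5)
Your proposal is correct and follows essentially the same route the paper intends: specialize Theorem \ref{thm1} with $\psi=2G_D(\cdot,z_0)$, $\varphi\equiv 0$, $c\equiv 1$, $Z_0=\{z_0\}$, $\mathcal{F}_{z_0}=\mathfrak{m}_{z_0}$, identify $G(t)=2/B_{D_t}(z_0,z_0)$ with minimizer $F_t=B_{D_t}(z_0,z_0)^{-1}B_{D_t}(\cdot,z_0)\,dz$, and translate the $OSH^2$-norm into the conjugate Hardy norm via Proposition \ref{l:OSH=H} (including the $t\in-2N$ case by definition). The constant bookkeeping ($|g\,dz|^2=2|g|^2d\lambda_D$, the factor $2\pi$, and the a.e.\ differentiability of $G$ from concavity) is handled correctly, so no gaps remain.
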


Inequality \eqref{eq:0718a} implies that 
$\frac{\pi}{\hat K_{D_t}(z_0)}\le-\frac{d}{dt}\frac{1}{B_{D_t}(z_0,z_0)}.$
 The following Corollary gives a monotonicity of $\hat K_{D_t}(z_0)e^{-t}$ and a characterization for $\frac{\pi}{\hat K_{D_t}(z_0)}=-\frac{d}{dt}\frac{1}{B_{D_t}(z_0,z_0)}$ a.e. on $(0,+\infty)$.

\begin{Corollary}
	\label{c:2}$\hat K_{D_t}(z_0)e^{-t}$ is a decreasing function on $[0,+\infty)$. Moreover, the following three statements are equivalent:
	
	$(1)$ $\frac{\pi}{\hat K_{D_t}(z_0)}=-\frac{d}{dt}\frac{1}{B_{D_t}(z_0,z_0)}$ a.e. on $(0,+\infty)$;
	
	$(2)$ $\hat K_{D_t}(z_0)e^{-t}$ is a constant function on $[0,+\infty)$;
	
	$(3)$ $D$ is simply connected. 
\end{Corollary}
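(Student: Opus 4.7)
The plan is to recognize Corollary \ref{c:2} as the specialization of Theorems \ref{thm1}--\ref{thm3} to the data $M=D$, $\psi=2G_D(\cdot,z_0)$, $\varphi\equiv 0$, $c\equiv 1$, $Z_0=\{z_0\}$, $\mathcal{F}_{z_0}=\mathfrak m_{z_0}$, $f=dz$. Under this choice, $M_t=D_t$, $G(t)=2/B_{D_t}(z_0,z_0)$ on the Bergman side, $H(t)=2\pi/\hat K_{D_t}(z_0)$ on the Hardy side (by Proposition \ref{l:OSH=H}), and $h(t)=\int_t^{+\infty}e^{-s}\,ds=e^{-t}$, so $h^{-1}(r)=-\log r$ on $r\in[0,1]$. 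The inclusion $OSH_\rho^2(D_t,\psi_t)\subset\mathcal H^2(D_t,\rho)$ needed for Theorem \ref{thm3} is automatic, since a conjugate Hardy $H^2$ function on a planar region with analytic boundary is automatically square-integrable on the region.

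For the monotonicity in Part 1, apply Theorem \ref{thm3}: $\hat H(h^{-1}(r))$ is concave in $r\in[0,1]$. A direct chain-rule calculation yields $\tfrac{d}{dr}\hat H(h^{-1}(r))=H(t)e^t$ with $r=e^{-t}$, so this derivative is decreasing in $r$, equivalently increasing in $t$. Hence $2\pi e^t/\hat K_{D_t}(z_0)$ is increasing in $t$, which is exactly the assertion that $\hat K_{D_t}(z_0)e^{-t}$ is decreasing on $[0,+\infty)$.

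For the equivalences, I would close the loop via $(3)\Rightarrow(1),(2)$, $(2)\Rightarrow(3)$, $(1)\Rightarrow(3)$. If $D$ is simply connected, a Riemann map $\phi:D\to\mathbb D$ with $\phi(z_0)=0$ identifies $D_t=\phi^{-1}(\{|w|<e^{-t/2}\})$ conformally with the disk of radius $e^{-t/2}$; transporting the standard disk formulas gives $B_{D_t}(z_0,z_0)=|\phi'(z_0)|^2 e^t/\pi$ and $\hat K_{D_t}(z_0)=|\phi'(z_0)|^2 e^t$, from which both (1) and (2) follow by direct calculation. For $(2)\Rightarrow(3)$: as $t\to+\infty$, $D_t$ approximates a small near-disk centred at $z_0$ of radius $\sim c_\beta(z_0)^{-1}e^{-t/2}$, where $c_\beta(z_0)$ denotes the logarithmic capacity, so $\hat K_{D_t}(z_0)e^{-t}\to c_\beta(z_0)^2$. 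Under (2) the constant value must equal this limit, and setting $t=0$ gives $\hat K_D(z_0)=c_\beta(z_0)^2$. Combining with Saitoh's inequality $\pi B_D(z_0,z_0)\le\hat K_D(z_0)$ and the Suita inequality $c_\beta(z_0)^2\le\pi B_D(z_0,z_0)$ forces all three quantities to be equal, and the equality case of the Suita conjecture due to Guan-Zhou \cite{guan-zhou13ap} then yields that $D$ is simply connected.

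The hard direction $(1)\Rightarrow(3)$ invokes Theorem \ref{thm2}: condition (1) produces a global holomorphic $(1,0)$-form $F$ on $D$ with $F(z_0)=dz$ such that $F|_{D_t}=B_{D_t}(\cdot,z_0)/B_{D_t}(z_0,z_0)\,dz$ for every $t\ge 0$. Writing $f:=F/dz$, this says that the single global holomorphic function $f$ has zero set coinciding inside each $D_t$ with the zero set of $B_{D_t}(\cdot,z_0)$. Since $f$ has only finitely many zeros, whereas the zeros of $B_{D_t}(\cdot,z_0)$ on a multiply connected $D_t$ vary continuously with $t$ (by real-analytic dependence of the Bergman kernel on the domain between critical values of $G_D$), the only possibility is that $B_{D_t}(\cdot,z_0)$ is zero-free on every $D_t$. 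By the classical Schiffer-type characterization of zeros of the planar Bergman kernel, this forces each $D_t$ to be simply connected, hence $D$ is simply connected (since $D_t$ has the same topology as $D$ for small $t$). The principal obstacle is making the ``moving zeros'' argument fully rigorous; an alternative route is to combine the universal-minimizer property with Corollary \ref{c:1}(2) at $t\to 0+$ to produce equality in Saitoh's inequality $\hat K_D(z_0)=\pi B_D(z_0,z_0)$, and then invoke Guan's solution of Conjecture \ref{conj1} in \cite{guan-19saitoh} to conclude simple-connectedness.
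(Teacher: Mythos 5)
Your reduction to the setup of Theorems \ref{thm1}--\ref{thm3}, the monotonicity of $\hat K_{D_t}(z_0)e^{-t}$ via Theorem \ref{thm3}, and the direction $(3)\Rightarrow(1),(2)$ via a Riemann map are essentially the paper's Steps 1--2 (the paper additionally invokes Lemma \ref{l:l-continous} to get measurability of $t\mapsto 1/\hat K_{D_t}(z_0)$, which Theorem \ref{thm3} needs, and to upgrade an a.e./concavity statement to monotonicity at \emph{every} $t$; your ``chain rule'' gloss skips this). The genuine gaps are in the two remaining implications. For $(2)\Rightarrow(3)$ you rest everything on $\lim_{t\to+\infty}\hat K_{D_t}(z_0)e^{-t}=c_\beta(z_0)^2$. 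Only the inequality $\hat K_{D_t}(z_0)e^{-t}\ge c_\beta(z_0)^2$ comes cheaply (Saitoh's inequality plus Suita's inequality, using $c_{\beta,D_t}(z_0)=e^{t/2}c_{\beta,D}(z_0)$); the reverse bound amounts to a uniform lower bound for $\|f\|^2_{H^2_{(c)}(D_t)}$ over \emph{all} competitors with $f(z_0)=1$, i.e.\ a localization/comparison statement for the conjugate Hardy kernel. Unlike the Bergman kernel, $\hat K$ is not obviously monotone under domain inclusion (its norm is a boundary-collar quantity with respect to the domain's own Green function), so you cannot simply sandwich $D_t$ between two round disks; neither the paper nor your proposal supplies such a statement, so this step is unproven.

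For $(1)\Rightarrow(3)$, the moving-zeros argument is, as you concede, not rigorous, and it also needs ``$B_{D_t}(\cdot,z_0)$ zero-free for this one $z_0$ implies $D_t$ simply connected,'' which is stronger than the known non-Lu Qi-keng results for multiply connected planar domains and is not a citable classical fact. The fallback does not close the gap either: from $(1)$ alone, integrating and using the monotonicity of $H(t)e^t$ only reproduces $\hat K_D(z_0)\ge\pi B_D(z_0,z_0)$; to force equality in Saitoh's inequality you would also need the constancy in $(2)$, which you have not derived from $(1)$. This direction is exactly where the paper's argument is different and essential: Theorem \ref{thm2} produces one holomorphic function $F$ on $D$ with $F(z_0)=1$ minimizing the Bergman-type problem on every $D_t$; choosing $t_0$ large so that $D_{t_0}$ is simply connected, uniqueness of the minimizer forces $F=c\,\frac{d\,p_*(f_{z_0})}{dz}$ there, where $p:\Delta\to D$ is the universal covering and $|f_{z_0}|=p^*\bigl(e^{G_D(\cdot,z_0)}\bigr)$; analytic continuation gives $p^*(F\,dz)=c\,df_{z_0}$ on $\Delta$, so $p_*(f_{z_0})$ is single-valued on $D$, and Suita's criterion yields simple connectedness. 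Likewise, the paper proves $(2)\Rightarrow(1)$ (not $(2)\Rightarrow(3)$ directly) via the equality analysis of Proposition \ref{p:3} combined with the coarea formula and Theorem \ref{thm2}. Without arguments of this kind, your proposal establishes only the monotonicity and $(3)\Rightarrow(1),(2)$.
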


The following remark shows that Corollary \ref{c:2} can deduce the solution of Conjecture \ref{conj1} (the Saitoh's conjecture for conjugate Hardy $H^2$ kernels).

\begin{Remark}
	As $\hat K_{D_t}(z_0)e^{-t}$ is a decreasing function on $[0,+\infty)$ and inequality $\frac{\pi}{\hat K_{D_t}(z_0)}\le-\frac{d}{dt}\frac{1}{B_{D_t}(z_0,z_0)}$ holds for any $t\ge0$,  we have
	\begin{equation}
		\nonumber
		\begin{split}
			\frac{\pi}{\hat K_{D}(z_0)}=&\int_{0}^{{+\infty}}\frac{\pi e^{-t}}{\hat K_{D}(z_0)}dt
			\le\int_{0}^{{+\infty}}\frac{\pi}{\hat K_{D_t}(z_0)}dt\\
			\le& \int_0^{+\infty}-\frac{d}{dt}\frac{1}{B_{D_t}(z_0,z_0)}dt\le \frac{1}{B_{D}(z_0,z_0)},
		\end{split}
	\end{equation}
which shows that $\hat K_{D}(z_0)\ge\pi B_{D}(z_0,z_0)$. If the equality $\hat K_{D}(z_0)=\pi B_{D}(z_0,z_0)$ holds, then  $\hat K_{D_t}(z_0)e^{-t}$ is a constant function on $[0,+\infty)$.
Thus, using the characterization in Corollary \ref{c:2}, Conjecture \ref{conj1} has been proved. 
\end{Remark}

\section{Preparations}

In this section, we do some preparations.

\subsection{Minimal $L^2$ integrals and Ohsawa-Saitoh-Hardy space}
In this section, we recall and give some results on minimal $L^2$ integrals $G(t)$ and Ohsawa-Saitoh-Hardy space $OSH_{\rho}^2(M,\psi)$.

We firstly introduce a property of coherent analytic sheaves and a convergence property of holomorphic $(n,0)$ form.
\begin{Lemma}[see \cite{G-R}]
\label{closedness}
Let $N$ be a submodule of $\mathcal O_{\mathbb C^n,o}^q$, $1\leq q\leq \infty$, let $f_j\in\mathcal O_{\mathbb C^n}(U)^q$ be a sequence of $q-$tuples holomorphic in an open neighborhood $U$ of the origin $o$. Assume that the $f_j$ converge uniformly in $U$ towards  a $q-$tuples $f\in\mathcal O_{\mathbb C^n}(U)^q$, assume furthermore that all germs $(f_{j},o)$ belong to $N$. Then $(f,o)\in N$.	
\end{Lemma}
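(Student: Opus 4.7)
\emph{Proof plan.} The plan is to combine the Noetherian property of the local ring $\mathcal{O}_{\mathbb{C}^n,o}$ with a uniform representation lemma, and then pass to a limit via Montel's theorem.

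First, I would use that $\mathcal{O}_{\mathbb{C}^n,o}$ is a Noetherian local ring, so (after reducing the case $q=\infty$ to the finite case componentwise, since any given relation only involves finitely many coordinates) the submodule $N$ is finitely generated. Choose generators $g_1,\ldots,g_s\in\mathcal{O}_o^q$ of $N$ and fix representatives that are holomorphic $q$-tuples on some open neighborhood $V_0\Subset U$ of $o$ on which $f$ and all $f_j$ (for $j$ large) are uniformly bounded.

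Second, I would establish the following \emph{bounded representation} property: there exist a neighborhood $V\subset V_0$ of $o$ and a constant $C>0$ such that every $h\in\mathcal{O}(V_0)^q$ with $(h,o)\in N$ admits a decomposition $h=\sum_{i=1}^{s}a_i\,g_i$ on $V$ with $a_i\in\mathcal{O}(V)$ satisfying $\sup_V|a_i|\le C\sup_{V_0}|h|$. Applying this to each $h=f_j$, one obtains $a_{ij}\in\mathcal{O}(V)$ with $f_j=\sum_i a_{ij}g_i$ on $V$ and $\sup_V|a_{ij}|\le C\sup_{V_0}|f_j|$; the bound is uniform in $j$ since $f_j\to f$ in $\mathcal{O}(V_0)^q$, hence $\sup_{V_0}|f_j|$ is bounded.

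Third, by Montel's theorem, some subsequence $(a_{i,j_k})_k$ converges uniformly on a relatively compact subneighborhood $V'\Subset V$ to holomorphic $a_i\in\mathcal{O}(V')$. Passing to the limit in $f_{j_k}=\sum_i a_{i,j_k}g_i$ yields $f=\sum_i a_i g_i$ on $V'$, whence $(f,o)\in N$, as desired.

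The main obstacle will be the bounded representation lemma in the second step: one must upgrade the pointwise statement ``$h$ can be written in the form $\sum a_i g_i$ on \emph{some} neighborhood'' to a \emph{uniform} statement valid on a fixed $V$ with a fixed constant $C$. In Fréchet-space language this is the openness of the restriction of $\mathcal{O}(V)^s\to\mathcal{O}(V)^q$ to its image; in practice it is proved by induction on $n$ using the Weierstrass Preparation Theorem to reduce to one fewer variable, which is the approach followed in Grauert--Remmert.
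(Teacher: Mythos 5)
The paper itself gives no proof of this lemma -- it is quoted from Grauert--Remmert -- so the comparison can only be with the standard textbook argument, and that is essentially what you outline: for finite $q$ your plan (finitely many generators $g_1,\dots,g_s$ of $N$, a division-with-bounds statement on a fixed neighborhood $V$, then Montel and passage to the limit in $f_{j}=\sum_i a_{ij}g_i$) is sound. Be aware, though, that the entire content is concentrated in your second step: the bounded representation property is precisely Cartan's closure-of-modules/division-with-bounds theorem (proved by induction on $n$ via Weierstrass division with estimates, as in Grauert--Remmert or H\"ormander), and it is not easier than the closedness statement being proved; as written, your argument reduces the quoted lemma to an unproved lemma of at least equal depth. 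Two further remarks. First, the parenthetical reduction of the case $q=\infty$ to finite $q$ ``componentwise'' is not valid as stated: membership of a germ in a submodule of $\mathcal O_{\mathbb C^n,o}^{\infty}$ is not a componentwise condition, and such submodules need not be finitely generated; this does not affect the paper, which applies the lemma only with finite rank (in effect $q=1$, for the ideals $\mathcal F_{z_0}\subset\mathcal O_{z_0}$), but it is a gap against the statement as quoted. Second, there is a cheaper route that avoids both the division-with-bounds lemma and Montel: by Krull's intersection theorem, $N=\bigcap_{k\ge1}\bigl(N+\mathfrak m^{k}\mathcal O_{o}^{q}\bigr)$; membership in $N+\mathfrak m^{k}\mathcal O_{o}^{q}$ depends only on the $k$-jet at $o$ and is a linear, hence closed, condition in the finite-dimensional space of $k$-jets; uniform convergence $f_j\to f$ gives convergence of all Taylor coefficients by the Cauchy estimates, so $(f,o)\in N+\mathfrak m^{k}\mathcal O_{o}^{q}$ for every $k$, and therefore $(f,o)\in N$. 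That argument uses only the Noetherian property and is the quickest complete proof for finite $q$.
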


\begin{Lemma}
	[see \cite{GY-concavity}]\label{l:converge}
	Let $M$ be a complex manifold. Let $S$ be an analytic subset of $M$.  	
	Let $\{g_j\}_{j=1,2,...}$ be a sequence of nonnegative Lebesgue measurable functions on $M$, which satisfies that $g_j$ are almost everywhere convergent to $g$ on  $M$ when $j\rightarrow+\infty$,  where $g$ is a nonnegative Lebesgue measurable function on $M$. Assume that for any compact subset $K$ of $M\backslash S$, there exist $s_K\in(0,+\infty)$ and $C_K\in(0,+\infty)$ such that
	$$\int_{K}{g_j}^{-s_K}dV_M\leq C_K$$
	 for any $j$, where $dV_M$ is a continuous volume form on $M$.
	
 Let $\{F_j\}_{j=1,2,...}$ be a sequence of holomorphic $(n,0)$ form on $M$. Assume that $\liminf_{j\rightarrow+\infty}\int_{M}|F_j|^2g_j\leq C$, where $C$ is a positive constant. Then there exists a subsequence $\{F_{j_l}\}_{l=1,2,...}$, which satisfies that $\{F_{j_l}\}$ is uniformly convergent to a holomorphic $(n,0)$ form $F$ on $M$ on any compact subset of $M$ when $l\rightarrow+\infty$, such that
 $$\int_{M}|F|^2g\leq C.$$
\end{Lemma}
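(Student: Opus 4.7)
The plan proceeds in three stages: derive uniform local sup-bounds on $\{F_j\}$ away from $S$, use Montel plus a maximum-principle argument to extract a subsequence converging locally uniformly on all of $M$, and conclude the integral estimate by Fatou.

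First, pass to a subsequence (still indexed by $j$) so that $\int_M |F_j|^2 g_j \le C' := C+1$ for all $j$. For a compact $K \subset M\setminus S$, H\"older's inequality with conjugate exponents $(s_K+1)/s_K$ and $s_K+1$ applied to the decomposition $|F_j|^{2s_K/(s_K+1)} = (|F_j|^2 g_j)^{s_K/(s_K+1)} \cdot g_j^{-s_K/(s_K+1)}$ gives
$$\int_K |F_j|^{2s_K/(s_K+1)}\, dV_M \le \left(\int_K |F_j|^2 g_j\right)^{s_K/(s_K+1)} \left(\int_K g_j^{-s_K}\, dV_M\right)^{1/(s_K+1)} \le (C')^{s_K/(s_K+1)} C_K^{1/(s_K+1)}.$$
Writing $F_j = h_j\, dz_1\wedge\cdots\wedge dz_n$ in a local chart, $|h_j|^{2s_K/(s_K+1)}$ is plurisubharmonic, so the sub-mean-value inequality promotes this uniform local $L^1$ bound to a uniform local sup-bound on compact subsets of $M\setminus S$. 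Montel, applied to $\{h_j\}$ in each chart, then yields a subsequence $\{F_{j_l}\}$ converging uniformly on compact subsets of $M\setminus S$ to a holomorphic $(n,0)$ form $F$ on $M\setminus S$.

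Next, extend the uniform bounds across $S$. Fix $p\in S$; by Weierstrass preparation (or the local parametrisation theorem for analytic sets), choose local coordinates centred at $p$ so that $S$ lies in a region of the form $\{|z_j|\le \eta_j(|z'|): j=k+1,\ldots,n\}$, with $z'=(z_1,\ldots,z_k)$, $k=\dim_p S$, and continuous $\eta_j$. Pick a polyradius $r$ with $r_1,\ldots,r_k$ small and $r_j>\eta_j(r')$ for $j>k$; the distinguished boundary $T_r$ of the polydisc $\Delta_r$ is then a compact subset of $M\setminus S$. Iterated one-variable maximum modulus gives $\sup_{\Delta_r}|h_{j_l}|\le \sup_{T_r}|h_{j_l}|$, uniformly bounded by the previous step. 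Thus $\{F_{j_l}\}$ is locally uniformly bounded on all of $M$, and a further Montel extraction gives a sub-subsequence (still denote $\{F_{j_l}\}$) converging uniformly on compact subsets of $M$ to a holomorphic $(n,0)$ form that extends the previously constructed $F$ to $M$. Finally, Fatou applied to $|F_{j_l}|^2 g_{j_l}\to|F|^2 g$ pointwise a.e.\ delivers
$$\int_M |F|^2 g \le \liminf_{l\to\infty}\int_M |F_{j_l}|^2 g_{j_l} \le C.$$

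The main obstacle is the extension across $S$: securing, at each $p\in S$, a polydisc whose distinguished boundary misses $S$. Weierstrass preparation together with the local parametrisation theorem supplies the required coordinates, and reducible or singular $S$ is handled by running the argument on each local irreducible component of $S$ through $p$.
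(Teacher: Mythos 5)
Your overall strategy --- H\"older with exponents $(s_K+1)/s_K$ and $s_K+1$ to get uniform local $L^{2s_K/(s_K+1)}$ bounds away from $S$, the sub-mean value inequality for the plurisubharmonic functions $|h_j|^{2s_K/(s_K+1)}$ plus Montel and a diagonal argument to get locally uniform convergence on $M\backslash S$, a maximum-principle argument over polydiscs whose distinguished boundary misses $S$ to propagate the sup bounds across $S$, and Fatou at the end --- is sound, and it is essentially the argument of the cited reference \cite{GY-concavity}; the present paper states this lemma without proof, so there is no in-paper proof to compare against beyond that citation.

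One step, however, does not work as written. You begin by passing to a subsequence on which $\int_M|F_j|^2g_j\le C+1$, but then your final chain $\int_M|F|^2g\le\liminf_{l}\int_M|F_{j_l}|^2g_{j_l}\le C$ is not justified: from the normalization you only know $\liminf_{l}\int_M|F_{j_l}|^2g_{j_l}\le C+1$, and the two further extractions (Montel on $M\backslash S$, then again on $M$) may discard precisely those indices along which the integrals approach $\liminf_j\int_M|F_j|^2g_j\le C$. The repair is standard and short: choose the initial subsequence so that $\int_M|F_j|^2g_j$ \emph{converges} to $\liminf_{j\rightarrow+\infty}\int_M|F_j|^2g_j\le C$; every further subsequence then has the same limit, and Fatou (applied with $|F_{j_l}|^2g_{j_l}\rightarrow|F|^2g$ a.e.) gives $\int_M|F|^2g\le C$. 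Two cosmetic remarks on the extension across $S$: you need $r_j>\sup_{|z'|\le r'}\eta_j(|z'|)$ rather than $r_j>\eta_j(r')$, and it is a bit simpler to use only that locally $S\subset\{h=0\}$ for a single holomorphic $h\not\equiv0$, put $h$ in Weierstrass form in one variable, and apply the one-variable maximum principle in that variable over the circle $|z_n|=r_n$ (for each fixed $z'$), since the set $\{|z'|\le r'\}\times\{|z_n|=r_n\}$ is then a compact subset of $M\backslash S$; neither point affects correctness.
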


In the following, we follow the notations and assumptions of Theorem \ref{thm1}.

\begin{Lemma}[\cite{GMY-boundary2}]Let $B \in (0, +\infty)$ and $t_0>t_1 > 0$ be arbitrarily given. Let F be a holomorphic $(n,0)$ form on $\{\psi< -t_0\}$ such that
\begin{equation}
\int_{\{\psi<-t_0\}} {|F|}^2e^{-\varphi}c(-\psi)<+\infty,
\label{condition of lemma 2.2}
\end{equation}
Then there exists a holomorphic $(n,0)$ form $\tilde{F}$ on $\{\psi<-t_1\}$ such that
\begin{equation}
\int_{\{\psi<-t_1\}}|\tilde{F}-(1-b_{t_0,B}(\psi))F|^2e^{-\varphi-\psi+v_{t_0,B}(\psi)}c(-v_{t_0,B}(\psi))
\le C\int_{t_1}^{t_0+B}c(t)e^{-t}dt,
\end{equation}
where $C=\int_M \frac{1}{B} \mathbb{I}_{\{-t_0-B< \psi < -t_0\}}  {|F|}^2
e^{{-}\varphi-\psi}$, $b_{t_0,B}(t)=\int^{t}_{-\infty}\frac{1}{B} \mathbb{I}_{\{-t_0-B< s < -t_0\}}ds$ and
$v_{t_0,B}(t)=\int^{t}_{-t_0}b_{t_0,B}(s)ds-t_0$.
\label{lem:GZ_sharp2}
\end{Lemma}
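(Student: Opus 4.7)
The plan is to construct $\tilde F$ by solving a $\bar\partial$-equation. Define the cutoff $\chi := 1-b_{t_0,B}(\psi)$, which equals $1$ on $\{\psi\le -t_0-B\}$, vanishes on $\{\psi\ge -t_0\}$, and interpolates linearly in $\psi$ between. Since $F$ is only defined on $\{\psi<-t_0\}$ but $\chi$ vanishes outside this set, the product $\chi F$ extends by zero to a smooth $(n,0)$-form on the larger region $\{\psi<-t_1\}$, with $\bar\partial(\chi F)=-b_{t_0,B}'(\psi)\,\bar\partial\psi\wedge F$ supported on the thin annulus $\{-t_0-B<\psi<-t_0\}$. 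If I can solve $\bar\partial u=\bar\partial(\chi F)$ on $\{\psi<-t_1\}$ with the weighted bound asserted, then $\tilde F:=\chi F-u$ is the desired extension.

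For solvability, I would apply a twisted H\"ormander--Donnelly-Fefferman--Berndtsson $L^2$ estimate on the weakly pseudoconvex K\"ahler manifold $\{\psi<-t_1\}\setminus(X\cup Z)$, with plurisubharmonic weight $\varphi+\psi-v_{t_0,B}(\psi)$ and twist factor $-\log c(-v_{t_0,B}(\psi))$. The design of the auxiliary function $v_{t_0,B}$ is crucial: $v_{t_0,B}'=b_{t_0,B}\in[0,1]$, and $v_{t_0,B}(t)=t$ for $t\ge -t_0$ while $v_{t_0,B}(t)$ is constant for $t\le -t_0-B$, so the weight glues smoothly to $\varphi$ on the region where $\chi=1$ and to $\varphi+\psi$ where $\chi=0$. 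The hypothesis that $c(t)e^{-t}$ is decreasing produces the sign needed in the twisted curvature-plus-torsion inequality after the chain rule with $v_{t_0,B}'$, and the $P_{0,M}$ hypothesis gives a positive lower bound for $e^{-\varphi}c(-\psi)$ on compacta so the reciprocal weight is locally bounded.

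The estimate then bounds $\int_{\{\psi<-t_1\}}|u|^2 e^{-\varphi-\psi+v_{t_0,B}(\psi)}c(-v_{t_0,B}(\psi))$ by the weighted norm of $-b_{t_0,B}'(\psi)\bar\partial\psi\wedge F$. On $\operatorname{supp}b_{t_0,B}'$ one has $b_{t_0,B}'=1/B$, and the pointwise contraction $|\bar\partial\psi\wedge F|^2_{i\partial\bar\partial\psi}\le |F|^2$ absorbs the $\bar\partial\psi$ cleanly against the weight $e^{-\varphi-\psi}$, producing precisely the constant $C=\frac{1}{B}\int_M\mathbb{I}_{\{-t_0-B<\psi<-t_0\}}|F|^2 e^{-\varphi-\psi}$ of the statement. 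The remaining factor $\int_{t_1}^{t_0+B}c(t)e^{-t}dt$ emerges as the integral of the twist's derivative from the source annulus out through the shell $\{-t_0<\psi<-t_1\}$.

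The main obstacle is engineering the pair (weight, twist) so that both the twisted curvature positivity holds pointwise and the telescoped right-hand side collapses to exactly $C\int_{t_1}^{t_0+B}c(t)e^{-t}dt$; both reduce to elementary ODE identities involving $v_{t_0,B}'=b_{t_0,B}$ and $(c(t)e^{-t})'\le 0$, but require careful bookkeeping of signs. Technical remaining steps are standard: regularize $\psi$ to a smooth strictly plurisubharmonic exhaustion on relatively compact sublevels, exhaust $\{\psi<-t_1\}\setminus(X\cup Z)$ by complete K\"ahler pieces using hypothesis II, solve on each, and then extend the resulting $L^2$ form across $X$ using hypothesis I and Lemma \ref{closedness}, passing to the limit along the exhaustion via Lemma \ref{l:converge}.
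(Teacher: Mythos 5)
This lemma is quoted from \cite{GMY-boundary2} and not reproved in the present paper, and your sketch follows essentially the same route as that reference (the Guan--Zhou-type method): take the cut-off datum $(1-b_{t_0,B}(\psi))F$ extended by zero, solve $\bar\partial$ with a twisted $L^2$ estimate whose weight is built from $\varphi+\psi-v_{t_0,B}(\psi)$ and $c(-v_{t_0,B}(\psi))$, use the decreasing property of $c(t)e^{-t}$ for the required positivity, and treat $X$, $Z$, the exhaustion and the limit exactly as you indicate via hypotheses I--II, Lemma \ref{closedness} and Lemma \ref{l:converge}. One small correction to the bookkeeping: the torsion $-b_{t_0,B}'(\psi)\,\bar\partial\psi\wedge F$ must be contracted against the $v_{t_0,B}''(\psi)\,\sqrt{-1}\,\partial\psi\wedge\bar\partial\psi$ part of the Hessian of the modified weight (not against $\sqrt{-1}\,\partial\bar\partial\psi$, which may be degenerate since $\psi$ is merely plurisubharmonic); this is precisely where the factor $1/B$, and hence the constant $C$, arises.
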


The following lemma gives a characterization for the minimal $L^2$ integral $G(t)$ being equal to $0$.

\begin{Lemma}[see \cite{GMY-boundary2}]\label{lem:0}
$f\in H^0(Z_0,(\mathcal{O}(K_{M})\otimes\mathcal{F})|_{Z_0})\Leftrightarrow G(t)=0$.
\end{Lemma}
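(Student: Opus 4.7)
The easy implication is $(\Leftarrow)$: if $f$ already has germs in $\mathcal{O}(K_M)\otimes\mathcal{F}$ along $Z_0$, then the candidate $\tilde f\equiv 0$ satisfies $(\tilde f-f)_{z_0}=-f_{z_0}\in(\mathcal{O}(K_M)\otimes\mathcal{F})_{z_0}$ for every $z_0\in Z_0$ (each stalk being an $\mathcal{O}_{z_0}$-submodule), and contributes $0$ to the infimum defining $G(t)$, forcing $G(t)=0$.

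For $(\Rightarrow)$, assume $G(t)=0$ and pick an admissible minimizing sequence $\{\tilde f_j\}$ with
\[
\int_{\{\psi<-t\}}|\tilde f_j|^2e^{-\varphi}c(-\psi)\longrightarrow 0.
\]
I would feed this into Lemma \ref{l:converge} on the manifold $\{\psi<-t\}$, with weights $g_j\equiv e^{-\varphi}c(-\psi)$ and analytic exceptional set $S:=Z\cap\{\psi<-t\}$. The hypothesis $c\in P_{0,M}$ supplies a closed set $E\subset Z\cap\{\psi=-\infty\}$ on whose complement $e^{-\varphi}c(-\psi)$ has positive local lower bounds; since $E\subset Z$, the integrability requirement $\int_K g^{-s_K}\,dV_M<+\infty$ is automatic on every compact $K\subset\{\psi<-t\}\setminus Z$. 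Lemma \ref{l:converge} then extracts a subsequence $\{\tilde f_{j_l}\}$ converging locally uniformly on $\{\psi<-t\}$ to a holomorphic $(n,0)$-form $F$ with $\int|F|^2e^{-\varphi}c(-\psi)=0$; as the weight is positive off $E$, this forces $F\equiv 0$ on $\{\psi<-t\}\setminus E$ and hence everywhere by analyticity.

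To conclude I pass to germs at an arbitrary $z_0\in Z_0\subset\{\psi=-\infty\}\subset\{\psi<-t\}$. On a coordinate polydisc $U$ around $z_0$ trivializing $K_M$, the local coefficient functions of $\tilde f_{j_l}-f$ converge uniformly on compacta of $U$ to those of $-f$. Each germ $(\tilde f_{j_l}-f)_{z_0}$ lies in the submodule $(\mathcal{O}(K_M)\otimes\mathcal{F})_{z_0}$, so the Grauert--Remmert closedness statement (Lemma \ref{closedness}) places the uniform limit germ $-f_{z_0}$ in the same submodule. Hence $f\in(\mathcal{O}(K_M)\otimes\mathcal{F})_{z_0}$ at every $z_0\in Z_0$, i.e.\ $f\in H^0(Z_0,(\mathcal{O}(K_M)\otimes\mathcal{F})|_{Z_0})$.

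The only delicate choice is the exceptional set $S$ in Lemma \ref{l:converge}: the set $E$ furnished by $P_{0,M}$ is merely closed, not necessarily analytic, but this is harmless because $E\subset Z$ and $Z$ is analytic, so $Z$ itself can play the role of $S$. After that the argument is a clean coupling of an $L^2$-compactness extraction with the coherent-sheaf closedness under uniform convergence.
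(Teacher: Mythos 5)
Your proof is correct and follows the same extraction-plus-closedness pattern that the paper itself uses (e.g.\ in the proof of Proposition \ref{p:1}); the paper does not reprove Lemma \ref{lem:0} but quotes it from \cite{GMY-boundary2}, where the argument is essentially the one you give: take $\tilde f\equiv 0$ for one direction, and for the other combine a minimizing sequence with Lemma \ref{l:converge} (using $Z$ as the analytic exceptional set, since $E\subset Z$) and the Grauert--Remmert closedness Lemma \ref{closedness}. Two cosmetic points: your $(\Leftarrow)$/$(\Rightarrow)$ labels are swapped relative to the statement, and Lemma \ref{l:converge} as stated only yields $\int|F|^2e^{-\varphi}c(-\psi)\le C$, so to get the vanishing of $F$ you should add the one-line observation that the full minimizing sequence tends to $0$, whence Fatou along the locally uniformly convergent subsequence gives $\int_{\{\psi<-t\}}|F|^2e^{-\varphi}c(-\psi)=0$ and then $F\equiv 0$ off the closed null set $E$, hence everywhere.
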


We recall a concavity property for the minimal $L^2$ integral $G(t)$.
\begin{Theorem}[\cite{GMY-boundary2}]$G(h^{-1}(r))$ is concave with respect to  $r\in (0,\int_{0}^{+\infty}c(t)e^{-t}dt)$, $\lim\limits_{t\to 0+0}G(t)=G(0)$ and $\lim\limits_{t \to +\infty}G(t)=0$.
\label{thm:concave}
\end{Theorem}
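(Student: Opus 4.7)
\textbf{Proof plan for Theorem \ref{thm:concave}.} The overall strategy is to turn a minimizer $F_{t_0}$ of $G(t_0)$ into an admissible competitor for $G(t_1)$ (with $t_1<t_0$) via the Ohsawa--Takegoshi type extension in Lemma \ref{lem:GZ_sharp2}, read off the resulting $L^2$ estimate as a quantitative comparison between $G(t_0)$ and $G(t_1)$, and then pass to the limit $B\to 0+$. After a change of variables $t=h^{-1}(r)$, this inequality is exactly the chord--tangent inequality characterizing concavity of $G\circ h^{-1}$.

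More concretely, first I would establish the existence of a unique minimizer $F_{t}$ of $G(t)$ for each $t\ge 0$ with $G(t)<+\infty$. For this, take a minimizing sequence $\tilde f_j$ and use Lemma \ref{l:converge} (with $g_j=e^{-\varphi}c(-\psi)\mathbb{I}_{\{\psi<-t\}}$, whose negative powers are locally integrable off the singular set by the definition of $P_{0,M}$) to extract a subsequence converging uniformly on compact sets to a holomorphic $(n,0)$-form $F_t$ realizing $G(t)$; the constraint $(F_t-f)\in H^0(Z_0,(\mathcal O(K_M)\otimes\mathcal F)|_{Z_0})$ passes to the limit via Lemma \ref{closedness}. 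Uniqueness follows from parallelogram identity in the associated Hilbert space.

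Next, fix $t_0>t_1\ge 0$ and $B>0$, and apply Lemma \ref{lem:GZ_sharp2} to $F=F_{t_0}$ to produce a holomorphic $(n,0)$-form $\tilde F=\tilde F_{t_0,B}$ on $\{\psi<-t_1\}$ with
\[
\int_{\{\psi<-t_1\}}\bigl|\tilde F-(1-b_{t_0,B}(\psi))F_{t_0}\bigr|^2 e^{-\varphi-\psi+v_{t_0,B}(\psi)} c(-v_{t_0,B}(\psi))\le C_{B}\int_{t_1}^{t_0+B}c(t)e^{-t}\,dt,
\]
where $C_B=\tfrac{1}{B}\int_{\{-t_0-B<\psi<-t_0\}}|F_{t_0}|^2e^{-\varphi-\psi}$. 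Since $b_{t_0,B}(\psi)=0$ on a neighborhood of $Z_0\subset\{\psi=-\infty\}$, the difference $\tilde F-F_{t_0}$ vanishes there, so $(\tilde F-f)\in H^0(Z_0,(\mathcal O(K_M)\otimes\mathcal F)|_{Z_0})$ and $\tilde F$ is admissible for $G(t_1)$. The key analytic step is to exploit the explicit shape of $v_{t_0,B}$: on $\{\psi\le -t_0-B\}$ one has $v_{t_0,B}(\psi)=\psi$ so the weight collapses to $e^{-\varphi}c(-\psi)$, while on $\{-t_0<\psi\}$ one has $v_{t_0,B}(\psi)=-t_0$ so the weight becomes $c(t_0)e^{-\varphi-\psi-t_0}$; combining this with $(1-b_{t_0,B})F_{t_0}$ supported in $\{\psi<-t_0\}$, an elementary expansion $|a|^2\le (1+\varepsilon)|a-b|^2+(1+\varepsilon^{-1})|b|^2$ plus the bound $\int_{\{\psi<-t_0\}}|F_{t_0}|^2e^{-\varphi}c(-\psi)=G(t_0)$ yields
\[
\int_{\{\psi<-t_1\}}|\tilde F|^2 e^{-\varphi}c(-\psi)\le G(t_0)+ C_B\cdot\frac{1}{c_+(t_0)e^{-t_0}}\int_{t_1}^{t_0+B}c(t)e^{-t}\,dt+o(1)
\]
after letting $\varepsilon\to 0$ appropriately. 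Letting $B\to 0+$, the averaged integral $C_B$ tends to $-G'_{+}(t_0)$ (this is where the concavity of $G\circ h^{-1}$ at $t_0$ is being read off), and the right-hand integral tends to $h(t_1)-h(t_0)$, giving
\[
G(t_1)\le G(t_0)+\frac{-G'_{+}(t_0)}{c_{+}(t_0)e^{-t_0}}\bigl(h(t_1)-h(t_0)\bigr).
\]
In the variable $r=h(t)$ this is precisely the inequality $\phi(r_1)\le\phi(r_0)+\phi'_{+}(r_0)(r_1-r_0)$ for $\phi=G\circ h^{-1}$, valid for all $r_0<r_1$ in $(0,\int_0^{+\infty}c(t)e^{-t}dt)$, so $\phi$ is concave.

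For the boundary limits, note $G$ is non-increasing in $t$ (larger $t$ enlarges the class of competitors, since the interior constraint disappears while the $Z_0$-jet condition persists), so both one-sided limits exist. The concavity of $\phi$ together with $\phi\ge 0$ and $h(0^+)<\infty$ forces $\phi$ to be continuous at $r=h(0)$, hence $\lim_{t\to 0+}G(t)=G(0)$. For the limit at infinity, restrict attention to $F_0$ (the global minimizer for $t=0$): $\int_{\{\psi<-t\}}|F_0|^2e^{-\varphi}c(-\psi)\to 0$ by dominated convergence as $t\to+\infty$, and since $F_0|_{\{\psi<-t\}}$ is admissible for $G(t)$, we get $G(t)\to 0$.

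The main obstacle is the weight manipulation in the third paragraph: the bound provided by Lemma \ref{lem:GZ_sharp2} is against the twisted weight $e^{-\varphi-\psi+v_{t_0,B}(\psi)}c(-v_{t_0,B}(\psi))$, and one must carefully split $\{\psi<-t_1\}$ into the regions where $v_{t_0,B}$ interpolates between $\psi$ and $-t_0$ in order to convert everything into a clean estimate against the target weight $e^{-\varphi}c(-\psi)$, while simultaneously tracking the $\varepsilon$-loss in the triangle-type inequality so that it vanishes in the limit $B\to 0+$.
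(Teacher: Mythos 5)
You are reconstructing a theorem that the paper itself does not prove (it is quoted from \cite{GMY-boundary2}); the closest in-paper model for the argument is the proof of Proposition \ref{p:1}, which uses exactly the machinery you invoke. Measured against that, your plan has a genuine circularity at its central step: you let $B\to 0+$ and assert that the averaged mass $C_B$ converges to $-G'_+(t_0)$, "reading off" concavity at $t_0$. But the existence and finiteness of $G'_+(t_0)$ at every $t_0$ is precisely what Theorem \ref{thm:concave} is used to guarantee in this paper; for a merely decreasing $G$ the difference quotients $\frac{G(t_0)-G(t_0+B)}{B}$ need not converge and can blow up at particular points, and in any case $C_B$ carries the weight $e^{-\varphi-\psi}$, so even formally its limit would be $\frac{e^{t_0}}{c_+(t_0)}$ times a derivative-type quantity, not $-G'_+(t_0)$ (note also that the a priori relation \eqref{eq:0522a} is an inequality, whose reverse is Theorem \ref{thm1}, proved \emph{after} concavity). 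The non-circular argument keeps $B>0$ fixed: since $F_{t_0}$ restricted to $\{\psi<-t_0-B\}$ is a competitor for $G(t_0+B)$, one has $\frac{1}{B}\int_{\{-t_0-B<\psi<-t_0\}}|F_{t_0}|^2e^{-\varphi}c(-\psi)\le\frac{G(t_0)-G(t_0+B)}{B}$, converts weights by the factor $\frac{e^{t_0+B}}{\inf_{(t_0,t_0+B)}c}$ as in \eqref{equ:GZc}, and arrives at a three-point chord inequality relating $G(t_1)$, $G(t_0)$, $G(t_0+B)$ in the variable $r=h(t)$; concavity follows from chord inequalities alone, with no derivative ever appearing.

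There is a second gap in the weight manipulation: the elementary expansion $|a|^2\le(1+\varepsilon)|a-b|^2+(1+\varepsilon^{-1})|b|^2$ cannot produce the additive estimate $G(t_1)\le G(t_0)+\cdots$, because whichever way you distribute $\varepsilon$ the term carrying $(1+\varepsilon^{-1})$ is not $o(1)$; optimizing in $\varepsilon$ only gives $\sqrt{G(t_1)}\le\sqrt{G(t_0)}+\sqrt{K}$, and the cross term destroys exact concavity. The correct device, as in Proposition \ref{p:1}, is to extract a limit $F_1$ of the extensions $\tilde F_B$ via Lemma \ref{l:converge} and Lemma \ref{closedness}, apply Fatou, and then use the orthogonality identity \eqref{equ:20170913e} of Lemma \ref{lem:A} to rewrite $\int_{\{\psi<-t_0\}}|F_1|^2e^{-\varphi}c(-\psi)$ as $G(t_0)+\int_{\{\psi<-t_0\}}|F_1-F_{t_0}|^2e^{-\varphi}c(-\psi)$ with no loss. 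Finally, the endpoint claim $\lim_{t\to 0+0}G(t)=G(0)$ does not follow from "concavity plus $\phi\ge 0$": concavity on the open interval constrains nothing at $r=h(0)$, and monotonicity of $G$ gives only $\lim_{t\to0+}G(t)\le G(0)$, the wrong direction; you need either the chord inequality applied with $t_1=0$ together with a bound on its coefficient, or a separate compactness argument (minimizers $F_t$ for $t\to0+$, Lemma \ref{l:converge}, Lemma \ref{closedness}) yielding $G(0)\le\liminf_{t\to0+}G(t)$. Your argument for $\lim_{t\to+\infty}G(t)=0$ is fine.
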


The following lemma shows the existence and uniqueness of the minimal holomorphic form $F_t$.
\begin{Lemma}[see \cite{GMY-boundary2}]\label{lem:A}
Assume that $G(t)<+\infty$ for some $t\in[T,+\infty)$.
Then there exists a unique holomorphic $(n,0)$ form $F_{t}$ on
$\{\psi<-t\}$ satisfying $(F_{t}-f)\in H^{0}(Z_0,(\mathcal{O}(K_{M})\otimes\mathcal{F})|_{Z_0})$ and $\int_{\{\psi<-t\}}|F_{t}|^{2}e^{-\varphi}c(-\psi)=G(t)$.
Furthermore,
for any holomorphic $(n,0)$ form $\hat{F}$ on $\{\psi<-t\}$ satisfying $(\hat{F}-f)\in H^{0}(Z_0,(\mathcal{O}(K_{M})\otimes\mathcal{F})|_{Z_0})$ and
$\int_{\{\psi<-t\}}|\hat{F}|^{2}e^{-\varphi}c(-\psi)<+\infty$,
we have the following equality
\begin{equation}
\label{equ:20170913e}
\begin{split}
&\int_{\{\psi<-t\}}|F_{t}|^{2}e^{-\varphi}c(-\psi)+\int_{\{\psi<-t\}}|\hat{F}-F_{t}|^{2}e^{-\varphi}c(-\psi)
\\=&
\int_{\{\psi<-t\}}|\hat{F}|^{2}e^{-\varphi}c(-\psi).
\end{split}
\end{equation}
\end{Lemma}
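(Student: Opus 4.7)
The plan is to treat this as a standard closed-subspace minimization in a weighted $L^2$ Hilbert space. First I would reinterpret the admissible class
\[
\mathcal{A}(t):=\bigl\{\tilde f\in H^0(\{\psi<-t\},\mathcal{O}(K_M)):(\tilde f-f)\in H^0(Z_0,(\mathcal{O}(K_M)\otimes\mathcal{F})|_{Z_0}),\ \textstyle\int_{\{\psi<-t\}}|\tilde f|^2 e^{-\varphi}c(-\psi)<+\infty\bigr\}
\]
as an affine subset of the Hilbert space $\mathcal{H}=L^2_{(n,0)}(\{\psi<-t\},e^{-\varphi}c(-\psi))\cap\mathcal{O}$, and note that $\mathcal{A}(t)$ is nonempty by the hypothesis $G(t)<+\infty$. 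The associated linear subspace $\mathcal{A}_0(t)=\{g\in\mathcal{H}:(g,z_0)\in(\mathcal{O}(K_M)\otimes\mathcal{F})_{z_0}\text{ for all }z_0\in Z_0\}$ should be a \emph{closed} subspace of $\mathcal{H}$, and once that is established the Lemma follows from the standard projection theorem: the element $F_t$ is the (unique) closest point to $0$ in $\mathcal{A}(t)=f_0+\mathcal{A}_0(t)$, where $f_0$ is any fixed element of $\mathcal{A}(t)$.

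The main work is therefore to establish existence together with the required stalk condition. I would take a minimizing sequence $\tilde f_j\in\mathcal{A}(t)$ with $\int|\tilde f_j|^2 e^{-\varphi}c(-\psi)\to G(t)$. By the defining property of $c\in P_{0,M}$, the weight $e^{-\varphi}c(-\psi)$ is locally bounded below by a positive constant away from the closed set $E\subset Z\cap\{\psi=-\infty\}$, so the $L^2$ bound gives a uniform $L^2$ bound of $\{\tilde f_j\}$ on compact subsets of $\{\psi<-t\}\setminus E$. Applying Lemma \ref{l:converge} (with $g_j\equiv e^{-\varphi}c(-\psi)$) extracts a subsequence converging uniformly on compact subsets of $\{\psi<-t\}$ to a holomorphic $(n,0)$ form $F_t$, and the same lemma yields the key inequality
\[
\int_{\{\psi<-t\}}|F_t|^2 e^{-\varphi}c(-\psi)\le G(t).
\]
To recover $(F_t-f)\in H^0(Z_0,(\mathcal{O}(K_M)\otimes\mathcal{F})|_{Z_0})$, I would apply Lemma \ref{closedness} stalk-by-stalk at each $z_0\in Z_0$: the germs $(\tilde f_j-f,z_0)$ lie in the submodule $(\mathcal{O}(K_M)\otimes\mathcal{F})_{z_0}$ and converge uniformly in a neighborhood of $z_0$ to $(F_t-f,z_0)$, so the germ of the limit lies in the same submodule. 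Then $F_t\in\mathcal{A}(t)$, and combined with the infimum definition this forces $\int|F_t|^2e^{-\varphi}c(-\psi)=G(t)$.

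For the Pythagorean-type identity \eqref{equ:20170913e}, I would use a first-variation argument. Fix any $\hat F\in\mathcal{A}(t)$. For every $\lambda\in\mathbb{C}$ the form $F_t+\lambda(\hat F-F_t)$ still lies in $\mathcal{A}(t)$, since $\hat F-F_t\in\mathcal{A}_0(t)$ (the difference of two elements of the affine set). Expanding
\[
\int_{\{\psi<-t\}}|F_t+\lambda(\hat F-F_t)|^2 e^{-\varphi}c(-\psi)\ge G(t)=\int_{\{\psi<-t\}}|F_t|^2 e^{-\varphi}c(-\psi)
\]
in $\lambda$ and optimizing over the phase and magnitude of $\lambda$ forces the cross-term $\mathrm{Re}\langle F_t,\hat F-F_t\rangle$ (and hence, by choosing $\lambda$ purely imaginary, the whole Hermitian inner product) to vanish. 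Substituting $\lambda=1$ then yields \eqref{equ:20170913e}, and taking $\hat F$ to be any other minimizer yields uniqueness. The main obstacle is purely conceptual rather than technical: ensuring that the stalk constraints defining $\mathcal{A}_0(t)$ really are preserved under $L^2$-weak / uniform-on-compacta limits, which is exactly what Lemma \ref{closedness} is designed to handle.
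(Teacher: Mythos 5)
Your argument is correct and is essentially the proof given in the cited reference \cite{GMY-boundary2} (the present paper only quotes the lemma): a minimizing sequence combined with Lemma \ref{l:converge} and the closedness of submodules (Lemma \ref{closedness}) yields existence of $F_t$ with the stalk condition, and the perturbation $F_t+\lambda(\hat F-F_t)$ forces the cross term to vanish, giving \eqref{equ:20170913e} and uniqueness. No gaps worth flagging; your Hilbert-space/projection framing is only motivational and the actual steps you carry out are the standard ones.
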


Let $F_t$ be the unique holomorphic $(n,0)$ form on $M_t$ satisfying that $(F_t-f)\in H^0(Z_0 ,(\mathcal{O} (K_M) \otimes \mathcal{F})|_{Z_0})$ and $G(t)=\int_{ \{ \psi<-t\}}|F_t|^2e^{-\varphi}c(-\psi)$. By definitions, we have $F_t\in OSH_{\rho}^2(M_t,\psi_t)$ and 
\begin{equation}
	\label{eq:0522a2}\|F_t\|_{OSH_{\rho}^2(M_t,\psi_t)}^2=\liminf_{B\rightarrow0+0}\frac{\int_{\{-t-B<\psi<-t\}}|F_t|^2e^{-\varphi}c(-\psi)}{B}\leq-G'_{+}(t).
\end{equation}
Following the concavity of $G(h^{-1}(r))$, we have 
\begin{equation}
	\label{eq:0716a}G(t_1)\le G(t_0)+\frac{\int_{t_1}^{t_0}c(s)e^{-s}ds}{c_+(t_0)e^{-t_0}}(-G'_{+}(t_0))
\end{equation}
for any $0\le t_1<t_0<+\infty$. The following proposition shows that inequality \eqref{eq:0716a} also holds when replacing $-G'_{+}(t)$ by $\|F_t\|_{OSH_{\rho}^2(M_t,\psi_t)}^2$. 

\begin{Proposition}
\label{p:1} For any $t_0>t_1\ge0$, inequality
	\begin{equation}
		\label{eq:0522b}
		G(t_1)\le G(t_0)+\frac{\int_{t_1}^{t_0}c(s)e^{-s}ds}{c_+(t_0)e^{-t_0}}\|F_{t_0}\|_{OSH_{\rho}^2(M_{t_0},\psi_t)}^2
	\end{equation}
	holds,
	where $c_+(t)=\lim_{s\rightarrow t+0}c(s)$.	
\end{Proposition}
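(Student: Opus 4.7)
The plan is to apply Lemma \ref{lem:GZ_sharp2} to produce a holomorphic extension $\tilde F_B$ of $F_{t_0}$ from $\{\psi<-t_0\}$ to $\{\psi<-t_1\}$, then let the width parameter $B\to 0^+$ so that the constant appearing in the lemma produces exactly $\|F_{t_0}\|^2_{OSH_\rho^2(M_{t_0},\psi_{t_0})}$ via its $\liminf$ definition. Concretely, for each $B>0$ apply the lemma with $F=F_{t_0}$ at levels $t_0>t_1$ to get $\tilde F_B\in H^0(\{\psi<-t_1\},\mathcal O(K_M))$ and set $v_B:=(1-b_{t_0,B}(\psi))F_{t_0}-\tilde F_B$ (extended by $0$ on $\{\psi\geq-t_0\}$). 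The conclusion of the lemma is
\begin{equation*}
\int_{\{\psi<-t_1\}}|v_B|^2 e^{-\varphi-\psi+v_{t_0,B}(\psi)}c(-v_{t_0,B}(\psi))\leq C_B \int_{t_1}^{t_0+B}c(t)e^{-t}dt,
\end{equation*}
with $C_B=\frac{1}{B}\int_{\{-t_0-B<\psi<-t_0\}}|F_{t_0}|^2 e^{-\varphi-\psi}$. A direct check of the definitions shows $v_{t_0,B}(\psi)\geq\psi$ pointwise on $\{\psi<-t_1\}$, so the monotonicity of $c(s)e^{-s}$ upgrades the weight to give $\int_{\{\psi<-t_1\}}|v_B|^2 e^{-\varphi}c(-\psi)\leq C_B\int_{t_1}^{t_0+B}c(t)e^{-t}dt$. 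Near $Z_0\subset\{\psi=-\infty\}$ one has $b_{t_0,B}(\psi)=0$, so $\tilde F_B=F_{t_0}-v_B$ there, and combining this with $\mathcal F\supset\mathcal I(\varphi+\psi)$ and the finiteness of the weighted integral yields $(\tilde F_B-f)\in H^0(Z_0,(\mathcal O(K_M)\otimes\mathcal F)|_{Z_0})$. Hence $\tilde F_B$ is admissible for $G(t_1)$.

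Next, Lemma \ref{lem:A} at level $t_0$ applied to $\tilde F_B|_{\{\psi<-t_0\}}$ gives $\int_{\{\psi<-t_0\}}|\tilde F_B|^2 e^{-\varphi}c(-\psi)=G(t_0)+\int_{\{\psi<-t_0\}}|\tilde F_B-F_{t_0}|^2 e^{-\varphi}c(-\psi)$. On $\{\psi<-t_0-B\}$ we have $\tilde F_B-F_{t_0}=-v_B$; on $\{-t_0<\psi<-t_1\}$ we have $\tilde F_B=-v_B$; only on the annulus $A_B:=\{-t_0-B<\psi<-t_0\}$ is there a mismatch, namely $\tilde F_B-F_{t_0}=-b_{t_0,B}(\psi)F_{t_0}-v_B$. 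Splitting the latter via $|a+b|^2\leq(1+\epsilon)|a|^2+(1+\epsilon^{-1})|b|^2$ and using $|b_{t_0,B}(\psi)|\leq 1$, we obtain
\begin{equation*}
\int_{\{\psi<-t_1\}}|\tilde F_B|^2 e^{-\varphi}c(-\psi)\leq G(t_0)+(1+\epsilon)C_B\int_{t_1}^{t_0+B}c(t)e^{-t}dt+(1+\epsilon^{-1})\int_{A_B}|F_{t_0}|^2 e^{-\varphi}c(-\psi).
\end{equation*}

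To finish, pick a subsequence $B_n\to 0^+$ along which $\tfrac{1}{B_n}\int_{A_{B_n}}|F_{t_0}|^2 e^{-\varphi}c(-\psi)\to\|F_{t_0}\|^2_{OSH_\rho^2(M_{t_0},\psi_{t_0})}$. Since $e^s/c(s)$ is increasing, one has $e^{-\psi}/c(-\psi)\leq\sup_{s\in(t_0,t_0+B_n)}e^s/c(s)$ on $A_{B_n}$, and this supremum tends to $e^{t_0}/c_+(t_0)$, so $\limsup_n C_{B_n}\leq\frac{e^{t_0}}{c_+(t_0)}\|F_{t_0}\|^2_{OSH_\rho^2(M_{t_0},\psi_{t_0})}$. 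Choosing $\epsilon=\epsilon_n:=\bigl(\int_{A_{B_n}}|F_{t_0}|^2 e^{-\varphi}c(-\psi)\bigr)^{1/2}\to 0^+$ arranges simultaneously that $\epsilon_n\to 0$ and that $(1+\epsilon_n^{-1})\int_{A_{B_n}}|F_{t_0}|^2 e^{-\varphi}c(-\psi)\to 0$. Combined with $\int_{t_1}^{t_0+B_n}c(t)e^{-t}dt\to\int_{t_1}^{t_0}c(s)e^{-s}ds$ and the minimality $G(t_1)\leq\int_{\{\psi<-t_1\}}|\tilde F_{B_n}|^2 e^{-\varphi}c(-\psi)$, passing to $\liminf$ produces inequality \eqref{eq:0522b}. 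The main obstacle is the annulus $A_B$, where $\tilde F_B-F_{t_0}$ picks up the extra term $-b_{t_0,B}(\psi)F_{t_0}$; handling this without losing a multiplicative constant requires the $\epsilon$-splitting above with $\epsilon_n$ tuned to the $o(1)$-smallness of the annulus integral, and this is precisely what lets the OSH-norm (a $\liminf$, not a derivative) appear as the sharp coefficient.
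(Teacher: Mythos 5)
Your proposal is correct, and it reaches \eqref{eq:0522b} by a route that is the same as the paper's in its first half but genuinely different in its second half. Both arguments start by applying Lemma \ref{lem:GZ_sharp2} to $F_{t_0}$, using $v_{t_0,B}(\psi)\ge\psi$ and the monotonicity of $c(s)e^{-s}$ to replace the weight $e^{-\psi+v_{t_0,B}(\psi)}c(-v_{t_0,B}(\psi))$ by $c(-\psi)$, and both convert the constant $C_B$ into $\frac{e^{t_0}}{c_+(t_0)}\|F_{t_0}\|^2_{OSH^2_\rho(M_{t_0},\psi_{t_0})}$ along a subsequence realizing the $\liminf$. The divergence is in how the error on $\{\psi<-t_0\}$ is handled: the paper never works at fixed $B$ beyond the estimate; it invokes Lemma \ref{l:converge} (which is where the hypothesis $c\in P_{0,M}$ enters) to extract a limit holomorphic form $F_1$ on $\{\psi<-t_1\}$, uses Lemma \ref{closedness} for the germ condition, computes the pointwise limits of $b_{t_0,B}$ and $v_{t_0,B}$, applies Fatou, and only then uses the orthogonality identity \eqref{equ:20170913e} of Lemma \ref{lem:A} for $F_1$ at level $t_0$, so no $\epsilon$-splitting is ever needed. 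You instead apply the identity of Lemma \ref{lem:A} to each $\tilde F_B|_{\{\psi<-t_0\}}$ (its hypotheses hold: the germ condition you derive from $b_{t_0,B}(\psi)=0$ near $Z_0$ together with $\mathcal F\supset\mathcal I(\varphi+\psi)$ is exactly what the paper gets from the cited lemma, and the finiteness of $\int_{\{\psi<-t_0\}}|\tilde F_B|^2e^{-\varphi}c(-\psi)$ follows from your bound on $v_B$ plus $|1-b_{t_0,B}|\le1$, which you should state explicitly), isolate the mismatch $-b_{t_0,B}(\psi)F_{t_0}$ on the shrinking annulus $A_B$, and absorb it with the $(1+\epsilon_n)$-splitting where $\epsilon_n$ is tuned to the $o(1)$ size of $\int_{A_{B_n}}|F_{t_0}|^2e^{-\varphi}c(-\psi)$; this works because that integral is $B_n$ times a bounded ratio when $\|F_{t_0}\|_{OSH^2_\rho}$ is finite (and the inequality is trivial when it is not). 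Your route buys a more elementary argument — no weak-compactness/normal-families lemma, no sheaf-closedness lemma, no Fatou, only scalar limits of inequalities — at the cost of the $\epsilon_n$ bookkeeping; the paper's route is heavier on machinery but yields, beyond the scalar inequality, an actual competitor form $F_1$ on $\{\psi<-t_1\}$ realizing the bound, which is the kind of object reused elsewhere (e.g.\ in the proof of Proposition \ref{p:3}).
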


\begin{proof}

Lemma \ref{lem:GZ_sharp2}
shows that for any $B>0$,
there exists a
holomorphic $(n,0)$ form $\tilde{F}_{B}$ on $\{\psi<-t_1\}$, such that
$(\tilde{F}_{B}-F_{t_{0}})\in H^{0}(Z_0,(\mathcal{O}(K_M)\otimes\mathcal{I}(\varphi+\psi))|_{Z_0})
\subseteq H^{0}(Z_0,(\mathcal{O}(K_{M})\otimes\mathcal{F})|_{Z_0})$
and
\begin{equation}
\label{equ:GZc}
\begin{split}
&\int_{\{\psi<-t_1\}}|\tilde{F}_{B}-(1-b_{t_{0},B}(\psi))F_{t_{0}}|^{2}e^{-\varphi-\psi+v_{t_0,B}(\psi)}c(-v_{t_0,B}(\psi))
\\\leq&
\int_{t_1}^{t_{0}+B}c(t)e^{-t}dt
\int_{\{\psi<-t_1\}}\frac{1}{B}(\mathbb{I}_{\{-t_{0}-B<\psi<-t_{0}\}})|F_{t_{0}}|^{2}e^{-\varphi-\psi}
\\\leq&
\frac{e^{t_{0}+B}\int_{t_1}^{t_{0}+B}c(t)e^{-t}dt}{\inf_{t\in(t_{0},t_{0}+B)}c(t)}
\int_{\{\psi<-t_1\}}\frac{1}{B}(\mathbb{I}_{\{-t_{0}-B<\psi<-t_{0}\}})|F_{t_{0}}|^{2}e^{-\varphi}c(-\psi).
\end{split}
\end{equation}
As $t\leq v_{t_0,B}(t)$, the decreasing property of $c(t)e^{-t}$ shows that
$$e^{-\psi+v_{t_0,B}(\psi)}c(-v_{t_0,B}(\psi))\geq c(-\psi).$$
Inequality \eqref{equ:GZc} and \eqref{eq:0522a2} imply that 
\begin{equation}
	\label{eq:0610a}
	\begin{split}
		&\liminf_{B\rightarrow0+0}\int_{\{\psi<-t_1\}}|\tilde{F}_{B}-(1-b_{t_{0},B}(\psi))F_{t_{0}}|^{2}e^{-\varphi}c(-\psi)\\
		\le &\liminf_{B\rightarrow0+0}\int_{\{\psi<-t_1\}}|\tilde{F}_{B}-(1-b_{t_{0},B}(\psi))F_{t_{0}}|^{2}e^{-\varphi-\psi+v_{t_0,B}(\psi)}c(-v_{t_0,B}(\psi))\\
		\le& \frac{\int_{t_1}^{t_0}c(s)e^{-s}ds}{c_+(t_0)e^{-t_0}}\|F_{t_0}\|_{OSH_{\rho}^2(M_{t_0},\psi_t)}^2\\
		<&+\infty.
	\end{split}
\end{equation}

Note that
\begin{equation}
\label{equ:GZd}
\begin{split}
&\left(\int_{\{\psi<-t_1\}}|\tilde{F}_{B}-(1-b_{t_{0},B}(\psi))F_{t_{0}}|^{2}e^{-\varphi}c(-\psi)\right)^{\frac{1}{2}}
\\\geq&\left(\int_{\{\psi<-t_1\}}|\tilde{F}_{B}|^{2}e^{-\varphi}c(-\psi)\right)^{\frac{1}{2}}-\left(\int_{\{\psi<-t_1\}}|(1-b_{t_{0},B}(\psi))F_{t_{0}}|^{2}e^{-\varphi}c(-\psi)\right)^{\frac{1}{2}}\\
\ge&\left(\int_{\{\psi<-t_1\}}|\tilde{F}_{B}|^{2}e^{-\varphi}c(-\psi)\right)^{\frac{1}{2}}-\left(\int_{\{\psi<-t_0\}}|F_{t_{0}}|^{2}e^{-\varphi}c(-\psi)\right)^{\frac{1}{2}},
\end{split}
\end{equation}
then it follows from inequality \eqref{eq:0610a} that
\begin{equation}
\nonumber
\begin{split}
&\liminf_{B\rightarrow0+0}\left(\int_{\{\psi<-t_1\}}|\tilde{F}_{B}|^{2}e^{-\varphi}c(-\psi)\right)^{\frac{1}{2}}
\\\leq&\frac{\int_{t_1}^{t_0}c(s)e^{-s}ds}{c_+(t_0)e^{-t_0}}\|F_{t_0}\|_{OSH_{\rho}^2(M_{t_0},\psi_t)}^2+\left(\int_{\{\psi<-t_0\}}|F_{t_{0}}|^{2}e^{-\varphi}c(-\psi)\right)^{\frac{1}{2}}.
\end{split}
\end{equation}
As $c\in P_{0,M}$,
it follows from  Lemma \ref{l:converge} that there exists a subsequence of $\{\tilde{F}_B\}$, denoted by $\{\tilde{F}_{B_k}\}_{k\in\mathbb{N}^+}$ ($B_k\rightarrow0$ when $k\rightarrow+\infty$), which is uniformly convergent to a holomorphic $(n,0)$ form $F_1$ on $\{\psi<-t_1\}$ on any compact subset of $\{\psi<-t_1\}$ when $k\rightarrow+\infty$,  such that
$$\int_{\{\psi<-t_1\}}|F_1|^2e^{-\varphi}c(-\psi)\le\liminf_{B\rightarrow0+0}\int_{\{\psi<-t_1\}}|\tilde{F}_{B}|^{2}e^{-\varphi}c(-\psi)<+\infty.$$  As $(\tilde{F}_B-F_{t_{0}})\in H^{0}(Z_0,(\mathcal{O}(K_{M})\otimes\mathcal{F})|_{Z_0})$ for any $B>0$, we have $(F_1-F_{t_{0}})\in H^{0}(Z_0,(\mathcal{O}(K_{M})\otimes\mathcal{F})|_{Z_0})$ by Lemma \ref{closedness}.
Note that 
\begin{equation*}
	\lim_{k\rightarrow+\infty}b_{t_0,B_k}(t)=\lim_{k\rightarrow+\infty}\int_{-\infty}^{t}\frac{1}{B_k}\mathbb{I}_{\{-t_0-B_k<s<-t_0\}}ds=\left\{ \begin{array}{lcl}
	0 & \mbox{if}& x\in(-\infty,-t_0)\\
	1 & \mbox{if}& x\in[-t_0,+\infty)
    \end{array} \right.
\end{equation*}
and
\begin{equation*}
	\lim_{j\rightarrow+\infty}v_{t_0,B_k}(t)=\lim_{j\rightarrow+\infty}\int_{-t_0}^{t}b_{t_0,B_k}ds-t_0=\left\{ \begin{array}{lcl}
	-t_0 & \mbox{if}& x\in(-\infty,-t_0)\\
	t & \mbox{if}& x\in[-t_0,+\infty)
    \end{array}. \right.
\end{equation*}
Following from  inequality \eqref{eq:0610a} and the Fatou's Lemma, we have 
\begin{equation}
	\nonumber
	\begin{split}
		&\int_{\{\psi<-t_0\}}|F_1-F_{t_0}|^2e^{-\varphi}c(-\psi)+\int_{\{-t_0\le\psi<-t_1\}}|F_1|^2e^{-\varphi}c(-\psi)\\
		=&\int_{\{\psi<-t_1\}}\lim_{k\rightarrow+\infty}|\tilde{F}_{B}-(1-b_{t_{0},B}(\psi))F_{t_{0}}|^{2}e^{-\varphi}c(-\psi)
\\\le&\liminf_{B\rightarrow0+0}\int_{\{\psi<-t_1\}}|\tilde{F}_{B}-(1-b_{t_{0},B}(\psi))F_{t_{0}}|^{2}e^{-\varphi}c(-\psi)\\
		\le &\frac{\int_{t_1}^{t_0}c(s)e^{-s}ds}{c_+(t_0)e^{-t_0}}\|F_{t_0}\|_{OSH_{\rho}^2(M_{t_0},\psi_t)}^2.
	\end{split}
\end{equation}
Using Lemma \ref{lem:A}, we get that that 
\begin{equation}
	\nonumber
	\begin{split}
	G(t_1)&\leq\int_{\{\psi<-t_1\}}|F_1|^2e^{-\varphi}c(-\psi)\\
	&=\int_{\{-t_0\le\psi<-t_1\}}|F_1|^2e^{-\varphi}c(-\psi)+\int_{\{\psi<-t_0\}}|F_1-F_{t_0}|^2e^{-\varphi}c(-\psi)+G(t_0)\\
	&\le \frac{\int_{t_1}^{t_0}c(s)e^{-s}ds}{c_+(t_0)e^{-t_0}}\|F_{t_0}\|_{OSH_{\rho}^2(M_{t_0},\psi_t)}^2+G(t_0).
	\end{split}
\end{equation}

Thus, Proposition \ref{p:1} holds.
\end{proof}

Recall that 
	\begin{equation}\nonumber
\begin{split}
H(t):=\inf\Bigg\{\|\tilde f\|_{OSH_{\rho}^2(M_t,\psi_t)}^2: \tilde{f}\in
H^0(\{\psi<-t\},\mathcal{O}(K_M)) \\
\&\, (\tilde{f}-f)\in
H^0(Z_0 ,(\mathcal{O} (K_M) \otimes \mathcal{F})|_{Z_0})\Bigg\}
\end{split}
\end{equation}	
for any $t\ge0$, then we have $H(t)\le -G_+'(t)$. Following the concavity of $G(h^{-1}(r))$, we have 
\begin{equation}
	\label{eq:0716b}\int_{t_1}^{t_0}-G'_+(t)dt=G(t_1)-G(t_0)\le\frac{\int_{t_1}^{t_0}c(s)e^{-s}ds}{c_+(t_0)e^{-t_0}}(-G'_{+}(t_0))
\end{equation}
 for any $0\le t_1<t_0<+\infty$. The following proposition shows that when replacing $-G_+'(t)$ by $H(t)$, inequality \eqref{eq:0716b} also holds under some assumptions.

\begin{Proposition}
	\label{p:2}
	Let $t_0>0$. Assume that  $OSH_{\rho}^2(M_{t_0},\psi_{t_0})\subset \mathcal{H}^2(M_{t_0},\rho)$ and $H(t)$ is Lebesgue measurable on $(0,+\infty)$. Then we have
	\begin{equation}
		\label{eq:0613a}
		\frac{\int_{t_1}^{t_0}H(t)dt}{\int_{t_1}^{t_0}c(t)e^{-t}dt}\le \frac{e^{t_0}}{c_+(t_0)}H(t_0),
	\end{equation}
	where $t_1\in[0,t_0)$.  
\end{Proposition}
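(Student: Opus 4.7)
The plan is to combine the extension machinery behind Proposition \ref{p:1} with a coarea-type identification that bridges the Ohsawa-Saitoh-Hardy norms at the levels $\{\psi=-s\}$ to the bulk integral on the slab $\{-t_0\le\psi<-t_1\}$. If $H(t_0)=+\infty$ there is nothing to prove, so assume $H(t_0)<+\infty$. Fix $\epsilon>0$ and choose a near-minimizer $\tilde f\in OSH_{\rho}^2(M_{t_0},\psi_{t_0})$ with $(\tilde f-f)\in H^0(Z_0,(\mathcal{O}(K_M)\otimes\mathcal{F})|_{Z_0})$ and $\|\tilde f\|^2_{OSH_{\rho}^2(M_{t_0},\psi_{t_0})}\le H(t_0)+\epsilon$. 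The hypothesis $OSH_{\rho}^2(M_{t_0},\psi_{t_0})\subset\mathcal{H}^2(M_{t_0},\rho)$ is exactly what supplies $\int_{M_{t_0}}|\tilde f|^2e^{-\varphi}c(-\psi)<+\infty$, the input required by Lemma \ref{lem:GZ_sharp2}.

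Next, I would rerun the construction inside the proof of Proposition \ref{p:1} with $\tilde f$ playing the role of $F_{t_0}$. Inspection of that proof shows no minimality of $F_{t_0}$ is used before Lemma \ref{lem:A} is invoked; only the finiteness of $\int|F_{t_0}|^2e^{-\varphi}c(-\psi)$ and of $\|F_{t_0}\|^2_{OSH_{\rho}^2(M_{t_0},\psi_{t_0})}$ enter. Applying Lemma \ref{lem:GZ_sharp2} to $\tilde f$ and extracting a subsequence via Lemma \ref{l:converge} produces $F_1\in H^0(M_{t_1},\mathcal{O}(K_M))$, and Lemma \ref{closedness} yields $(F_1-\tilde f)\in H^0(Z_0,(\mathcal{O}(K_M)\otimes\mathcal{I}(\varphi+\psi))|_{Z_0})$, hence $(F_1-f)\in H^0(Z_0,(\mathcal{O}(K_M)\otimes\mathcal{F})|_{Z_0})$. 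Applying Fatou's Lemma as in that proof gives
\begin{equation*}
\int_{\{-t_0\le\psi<-t_1\}}|F_1|^2e^{-\varphi}c(-\psi)\le\frac{\int_{t_1}^{t_0}c(s)e^{-s}ds}{c_+(t_0)e^{-t_0}}\|\tilde f\|^2_{OSH_{\rho}^2(M_{t_0},\psi_{t_0})}.
\end{equation*}

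The coarea step comes next. For every $s\in[t_1,t_0)$ the form $F_1|_{M_s}$ is admissible for $H(s)$, so $H(s)\le\|F_1\|^2_{OSH_{\rho}^2(M_s,\psi_s)}$. Set $h(s):=\int_{\{\psi<-s\}}|F_1|^2e^{-\varphi}c(-\psi)$; finiteness of $h$ on $[t_1,+\infty)$ follows from the previous display together with the elementary bound $\int_{M_{t_0}}|F_1|^2e^{-\varphi}c(-\psi)\le 2\int_{M_{t_0}}|F_1-\tilde f|^2e^{-\varphi}c(-\psi)+2\int_{M_{t_0}}|\tilde f|^2e^{-\varphi}c(-\psi)$, and $h$ is decreasing. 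Consequently $h$ is a.e.\ differentiable on $[t_1,t_0)$, and at every such point
\begin{equation*}
\|F_1\|^2_{OSH_{\rho}^2(M_s,\psi_s)}=\liminf_{B\to0+}\frac{h(s)-h(s+B)}{B}=-h'(s).
\end{equation*}
The Lebesgue inequality for monotone functions then gives $\int_{t_1}^{t_0}(-h'(s))\,ds\le h(t_1)-h(t_0)=\int_{\{-t_0\le\psi<-t_1\}}|F_1|^2e^{-\varphi}c(-\psi)$. Chaining the three inequalities and sending $\epsilon\to0$ produces \eqref{eq:0613a}.

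The main obstacle I expect is precisely this $\liminf$-to-derivative dictionary: the Ohsawa-Saitoh-Hardy norm is defined as a $\liminf$ of slab averages rather than as a clean differential object, so one has to check both that $s\mapsto\|F_1\|^2_{OSH_{\rho}^2(M_s,\psi_s)}$ is Lebesgue measurable and that it agrees almost everywhere with $-h'(s)$. Monotonicity of $h$ makes both points routine once $h$ is set up, and the measurability hypothesis imposed on $H(t)$ in the statement ensures that $\int_{t_1}^{t_0}H(s)\,ds$ is a well-defined Lebesgue integral comparable to $\int_{t_1}^{t_0}\|F_1\|^2_{OSH_{\rho}^2(M_s,\psi_s)}\,ds$.
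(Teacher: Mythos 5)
Your argument is correct, and its engine is the same as the paper's: take near-minimizers for $H(t_0)$ (the hypothesis $OSH_{\rho}^2(M_{t_0},\psi_{t_0})\subset\mathcal{H}^2(M_{t_0},\rho)$ is exactly what makes Lemma \ref{lem:GZ_sharp2} applicable to them), extend to $\{\psi<-t_1\}$, and convert the pointwise bounds $H(s)\le\liminf_{B\rightarrow0+0}\frac{1}{B}\int_{\{-s-B<\psi<-s\}}|\cdot|^2e^{-\varphi}c(-\psi)$ into a bound on $\int_{t_1}^{t_0}H(s)ds$ by the slab integral, using monotonicity in $s$. Where you diverge is an extra, unnecessary compactness step: you first pass to a limit form $F_1$ via Lemma \ref{l:converge}, Lemma \ref{closedness} and Fatou (rerunning Proposition \ref{p:1}) and then run the coarea argument on $F_1$. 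The paper skips this entirely: since $b_{t_0,B}(t)=1$ and $v_{t_0,B}(t)=t$ on $[-t_0,-t_1)$, the estimate of Lemma \ref{lem:GZ_sharp2} already yields, for each fixed $j$ and $B$, $\int_{t_1}^{t_0}H(t)dt\le\int_{\{-t_0\le\psi<-t_1\}}|\tilde F_{j,B}|^2e^{-\varphi}c(-\psi)\le\frac{e^{t_0+B}\int_{t_1}^{t_0+B}c(t)e^{-t}dt}{\inf_{(t_0,t_0+B)}c}\cdot\frac{1}{B}\int_{\{-t_0-B<\psi<-t_0\}}|f_j|^2e^{-\varphi}c(-\psi)$, after which one lets $B\rightarrow0+0$ and $j\rightarrow+\infty$ on the right-hand side only; no weak compactness, no closedness lemma, and no finiteness of $\int_{M_{t_0}}|F_1|^2e^{-\varphi}c(-\psi)$ are needed. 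Your detour works, but it forces the bookkeeping you mention: for finiteness of $h(t_0)$ you need the first term of the Fatou conclusion in Proposition \ref{p:1} (the integral of $|F_1-\tilde f|^2$ over $\{\psi<-t_0\}$), not only the slab term you displayed. Also, in your liminf-to-derivative dictionary only the inequality $\|F_1\|^2_{OSH_{\rho}^2(M_s,\psi_s)}\le -h'(s)$ a.e.\ is justified, and it is all you need; the asserted equality may fail at levels where $\{\psi=\mathrm{const}\}$ carries positive measure.
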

\begin{proof}Without loss of generality, we assume $H(t_0)<+\infty$.
	By definition of $H(t_0)$, there exists $\{f_j\}_{j\in\mathbb{Z}_{>0}}\subset OSH_{\rho}^2(M_{t_0},\psi_{t_0})\subset \mathcal{H}^2(M_{t_0},\rho)$ such that 
	$(f_j-f)\in H^0(Z_0,(\mathcal{O}(K_M)\otimes\mathcal{F})|_{Z_0})$ for any $j$ and 
	\begin{equation}
		\label{eq:0615a}
		H(t_0)=\lim_{j\rightarrow+\infty}\|f_j\|^2_{OSH_{\rho}^2(M_{t_0},\psi_{t_0})}.
	\end{equation}
	
Lemma \ref{lem:GZ_sharp2}
shows that for any $B>0$ and $j>0$,
there exists a 
holomorphic $(n,0)$ form $\tilde{F}_{j,B}$ on $\{\psi<-t_1\}$, such that
$(\tilde{F}_{j,B}-f_j)\in H^{0}(Z_0,(\mathcal{O}(K_M)\otimes\mathcal{I}(\varphi+\psi))|_{Z_0})
\subseteq H^{0}(Z_0,(\mathcal{O}(K_{M})\otimes\mathcal{F})|_{Z_0})$
and
\begin{equation}
\label{eq:0615b}
\begin{split}
&\int_{\{\psi<-t_1\}}|\tilde{F}_{j,B}-(1-b_{t_{0},B}(\psi))f_j|^{2}e^{-\varphi-\psi+v_{t_0,B}(\psi)}c(-v_{t_0,B}(\psi))
\\\leq&
\int_{t_1}^{t_{0}+B}c(t)e^{-t}dt
\int_{\{\psi<-t_1\}}\frac{1}{B}(\mathbb{I}_{\{-t_{0}-B<\psi<-t_{0}\}})|f_j|^{2}e^{-\varphi-\psi}
\\\leq&
\frac{e^{t_{0}+B}\int_{t_1}^{t_{0}+B}c(t)e^{-t}dt}{\inf_{t\in(t_{0},t_{0}+B)}c(t)}
\int_{\{\psi<-t_1\}}\frac{1}{B}(\mathbb{I}_{\{-t_{0}-B<\psi<-t_{0}\}})|f_j|^{2}e^{-\varphi}c(-\psi).
\end{split}
\end{equation}
As $H(t)\le \liminf_{\epsilon\rightarrow0+0}\frac{1}{\epsilon}\int_{\{-t-\epsilon<\psi<-t\}}|\tilde F_{j,B}|^2e^{-\varphi}c(-\psi)$ and $v_{t_0,B}(t)=t$ for $t\in[-t_0,-t_1)$, inequality \eqref{eq:0615b} implies that
\begin{equation}
	\label{eq:0615c}
	\begin{split}
			\int_{t_1}^{t_0}H(t)dt\le &\int_{\{-t_0\le\psi<-t_1\}}|\tilde F_{j,B}|^2e^{-\varphi}c(-\psi)\\
			\le&\int_{\{\psi<-t_1\}}|\tilde{F}_{j,B}-(1-b_{t_{0},B}(\psi))f_j|^{2}e^{-\varphi-\psi+v_{t_0,B}(\psi)}c(-v_{t_0,B}(\psi))\\
			\le &\frac{e^{t_{0}+B}\int_{t_1}^{t_{0}+B}c(t)e^{-t}dt}{\inf_{t\in(t_{0},t_{0}+B)}c(t)}
\int_{\{\psi<-t_1\}}\frac{1}{B}(\mathbb{I}_{\{-t_{0}-B<\psi<-t_{0}\}})|f_j|^{2}e^{-\varphi}c(-\psi).
	\end{split}
\end{equation}
Letting $B\rightarrow0+0$ and $j\rightarrow+\infty$, equality \eqref{eq:0615a} and inequality \eqref{eq:0615c} show
$$\frac{\int_{t_1}^{t_0}H(t)dt}{\int_{t_1}^{t_0}c(t)e^{-t}dt}\le \frac{e^{t_0}}{c_+(t_0)}H(t_0).$$

Thus, Proposition \ref{p:2} holds.	
\end{proof}

\subsection{Conjugate Hardy $H^2$ spaces on planar regions}In this section, we recall and give some results on the conjugate Hardy $H^2$ spaces on planar regulars.

Let $D$ be a planar regular region bounded by finite analytic Jordan curves (see \cite{saitoh,yamada}).  Let $z_0\in D$, and $G_{D}(\cdot,z_0)$ be the Green function on $D$.
The definition of $H^{2}_{(c)}(D)$ (see \cite{saitoh}) can be referred to Section \ref{sec:appli}.
The following lemma gives some properties for $H^2_{(c)}(D)$.
\begin{Lemma}[\cite{rudin55}]
\label{l:0-1}
	$(a)$ If $f\in H^2_{(c)}(D)$, there is a function $f_*$ on $\partial D$ such that $f$  has nontangential boundary value $f_*$ almost everywhere on $\partial D$. And the map $\gamma:f\longmapsto f_*\in L^2(\partial D)$ satisfies that
	$$u_f(z_0)=\frac{1}{2\pi}\int_{D}|\gamma(f)|^2\frac{\partial G_{D}(z,z_0)}{\partial v_z}|dz|$$
	holds for any $f\in H^2_{(c)}(D)$, where $u_f$ is the least harmonic majorant of $|f|^2$. 
	
	$(b)$ for any $g\in L^2(\partial D)$, $g\in \gamma(H_{(c)}^2(D))$ if and only if 
	$$\int_{\partial D}g(z)\phi(z)dz=0$$
	holds for any holomorphic function $\phi$ on a neighborhood of  $\overline D$.
	 
	 $(c)$ The inverse of $\gamma $ is given by 
	\begin{equation}
		\label{eq:0728a}f(w)=\frac{1}{2\pi\sqrt{-1}}\int_{\partial D}\frac{f_*(z)}{z-w}dz
	\end{equation}
	for any $z\in D$.
\end{Lemma}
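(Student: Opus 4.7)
The three assertions together form the classical boundary-value dictionary for $H^2_{(c)}(D)$ on a finitely connected analytic domain, due to Rudin. The plan is to treat (a) and (b) as the two essential pieces; (c) then falls out of the construction used in (b) applied to $f$ itself.

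For (a), the starting point is Green's identity: if $u$ is harmonic on a neighborhood of $\overline{D}$, then
$$u(z_0)=\frac{1}{2\pi}\int_{\partial D}u(z)\frac{\partial G_D(z,z_0)}{\partial v_z}|dz|.$$
For $f\in H^2_{(c)}(D)$, $|f|^2$ admits a harmonic majorant by hypothesis, so it has a least harmonic majorant $u_f$. The classical Fatou theorem in the form valid for finitely connected analytic domains---obtainable by uniformization to a canonical disk-or-slit domain, or directly via the Schwarz reflection principle on analytic boundary arcs---yields nontangential boundary values $f_*$ a.e.\ on $\partial D$, with $|f_*(z)|^2$ equal to the nontangential limit of $u_f$ a.e. Then $u_f$ is the Poisson--Green integral of $|f_*|^2$, and applying Green's identity to $u_f$ (after replacing $D$ by the slightly smaller sub-level set $\{G_D(\cdot,z_0)<-\epsilon\}$ and passing to the limit $\epsilon\to 0$, avoiding critical values) produces the required formula at $z_0$. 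Boundedness above and below of $\partial G_D/\partial v_z$ on $\partial D$, a Hopf-lemma-type consequence of analyticity of $\partial D$, then gives $f_*\in L^2(\partial D)$.

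For (b), the idea is duality via Cauchy's theorem. In the forward direction, given $f\in H^2_{(c)}(D)$ and $\phi$ holomorphic in a neighborhood of $\overline{D}$, the product $f\phi$ is holomorphic on $D$. Using the exhaustion $D_n=\{G_D(\cdot,z_0)<-\epsilon_n\}\uparrow D$ with $\epsilon_n\to 0$ avoiding critical values, Cauchy's theorem on each $D_n$ yields $\int_{\partial D_n}f\phi\,dz=0$. The $L^2$ boundary convergence provided by (a), combined with boundedness of $\phi$ on $\overline{D}$, lets us pass to the limit and conclude $\int_{\partial D}f_*\phi\,dz=0$. In the backward direction, given $g\in L^2(\partial D)$ satisfying the orthogonality property, I would define $f$ on $D$ by the Cauchy integral formula in (c). The orthogonality of $g$ against $(z-w_0)^{-1}$ for $w_0$ in each bounded complementary component of $D$ kills the ``wrong-side'' contributions, so $f$ is well-defined and holomorphic on $D$; the Plemelj--Sokhotski jump formulas on analytic arcs then identify $f_*=g$ a.e. Confirming $f\in H^2_{(c)}(D)$ reduces to exhibiting a harmonic majorant of $|f|^2$, produced as the Poisson--Green extension of $|g|^2$ using the formula in (a).

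Part (c) follows immediately: given $f\in H^2_{(c)}(D)$, Cauchy's integral formula on each $D_n$ represents $f(w)$ as $\frac{1}{2\pi\sqrt{-1}}\int_{\partial D_n}f(z)/(z-w)\,dz$ for $w\in D_n$, and the $L^2$ boundary convergence of $f|_{\partial D_n}$ to $f_*$ lets us pass to the limit. The main obstacle in this program is the clean execution of the boundary regularity theory and the Plemelj--Sokhotski jump identities on the finite disjoint union of analytic Jordan curves bounding $D$; these are classical (cf.\ \cite{rudin55,duren}) but technically delicate. The cleanest conceptual route is to pre-compose with a conformal uniformization to a canonical (e.g., circular-slit) domain, where Schwarz reflection across each analytic boundary arc reduces everything to the standard simply connected $H^2$ theory on the disk.
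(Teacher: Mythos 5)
This lemma is not proved in the paper at all: it is quoted as a known result from Rudin \cite{rudin55} (see also \cite{duren}), so there is no internal argument to compare yours against. Your sketch follows the standard classical route --- Fatou-type boundary values via reflection or uniformization, representation of the least harmonic majorant through the Poisson kernel $\frac{1}{2\pi}\,\partial G_D(z,z_0)/\partial v_z$, Cauchy-theorem duality for part (b), and the Cauchy integral for part (c) --- which is in substance the argument of \cite{rudin55}, so the approach is the expected one rather than a new route. Two steps in your outline carry real content that is asserted rather than argued: first, in (a), the identification of the nontangential boundary values of the least harmonic majorant $u_f$ with $|f_*|^2$ a.e.\ on a multiply connected domain is usually obtained by lifting to the universal cover (or mapping to a canonical domain) and verifying that the least harmonic majorant lifts to the least harmonic majorant, which needs a short separate argument; second, in the converse direction of (b), the claim that the Poisson--Green extension of $|g|^2$ majorizes $|f|^2$ does not follow merely from $f$ being holomorphic with nontangential boundary values $g$ --- one needs that the Cauchy integral of an $L^2$ density on the analytic boundary lies in the Smirnov class $E^2(D)$ (equivalently, that the integrals of $|f|^2$ over the approximating level curves converge to the integral of $|g|^2$), which is precisely where the analyticity of $\partial D$ is used. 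Both points are classical and repairable exactly along the lines you indicate, so your proposal is a correct sketch of the known proof, with those two technical steps left to the references.
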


Each function $f(z)\in H_{(c)}^2(D)$ has Fatou's nontangential boundary value a.e. on $\partial D$ belonging to $L^2(\partial D)$, then $f\in  H_{(c)}^2(D)$ can be seen as a function on $\overline D$ without misunderstanding.

 We recall a basic formula, which will be used in the subsequent discussion.
\begin{Lemma}[see \cite{GY-saitohprodct}]
	\label{l:partial}
	Let $\psi\in C^1(U)$ satisfy $\psi|_{\partial D}=0$ and $|d\psi|\not=0$ on $\partial D$, where $U$ is a neighborhood of $\partial D$. Then 
	$\frac{\partial \psi}{\partial v_z}=\left(\left(\frac{\partial \psi}{\partial x}\right)^2+\left(\frac{\partial \psi}{\partial y}\right)^2\right)^{\frac{1}{2}}$ on $\partial D$, where $\partial/\partial v_z$ denotes the derivative along the outer normal unit vector $v_z$ and $z=(x,y)$. 
\end{Lemma}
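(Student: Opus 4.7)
The plan is to exploit the fact that on $\partial D$ the function $\psi$ is constant (namely zero), which forces the gradient $\nabla\psi = (\partial\psi/\partial x,\partial\psi/\partial y)$ to be orthogonal to $\partial D$ at each point. First I would fix a boundary point $z\in\partial D$ and choose a $C^1$ unit tangent vector $\tau_z$ to $\partial D$ at $z$, so that $\{\tau_z,v_z\}$ is an orthonormal frame of $\mathbb{R}^2$. Parametrizing $\partial D$ locally near $z$ by arc length $s\mapsto\gamma(s)$ with $\gamma'(0)=\tau_z$ and differentiating the identity $\psi(\gamma(s))\equiv 0$ at $s=0$ gives $\nabla\psi(z)\cdot \tau_z=0$. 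Hence the tangential component of $\nabla\psi$ vanishes on $\partial D$.

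Since $\{\tau_z,v_z\}$ is an orthonormal basis, this forces
\begin{equation*}
\nabla\psi(z)=\bigl(\nabla\psi(z)\cdot v_z\bigr)\,v_z=\frac{\partial\psi}{\partial v_z}(z)\,v_z
\end{equation*}
at every point $z\in\partial D$. Taking Euclidean norms of both sides then yields
\begin{equation*}
\left|\frac{\partial\psi}{\partial v_z}(z)\right|=|\nabla\psi(z)|=\left(\Bigl(\tfrac{\partial\psi}{\partial x}\Bigr)^{2}+\Bigl(\tfrac{\partial\psi}{\partial y}\Bigr)^{2}\right)^{\!1/2},
\end{equation*}
and the hypothesis $|d\psi|\ne 0$ ensures both sides are strictly positive on $\partial D$.

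It remains to drop the absolute value on the left, which is where the only real subtlety lies. For the applications in the paper, $\psi$ arises (up to a positive multiple) from the Green function, hence $\psi<0$ on $D$ and $\psi=0$ on $\partial D$, so $\psi$ increases as one moves outward across $\partial D$; therefore $\partial\psi/\partial v_z\ge 0$ on $\partial D$, and combined with the nonvanishing of $|d\psi|$ this gives $\partial\psi/\partial v_z>0$, allowing the absolute value bars to be removed and yielding the claimed identity. I expect the sign determination to be the only point that requires care, while the orthogonality argument is standard from differential calculus.
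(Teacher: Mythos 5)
Your argument is correct, and it is the standard one: since $\psi$ vanishes identically on $\partial D$, differentiating along an arc-length parametrization kills the tangential component of $\nabla\psi$, so $\nabla\psi=(\partial\psi/\partial v_z)\,v_z$ on $\partial D$ and the modulus of the normal derivative equals $|\nabla\psi|$. Note that this paper does not prove the lemma at all — it is recalled from the reference \cite{GY-saitohprodct} — so there is no in-paper proof to compare with; your write-up supplies the missing elementary argument. The one point worth flagging is the sign step, which you correctly identify as the only subtlety: as literally stated (only $\psi\in C^1(U)$, $\psi|_{\partial D}=0$, $|d\psi|\neq 0$), the conclusion cannot hold, since replacing $\psi$ by $-\psi$ preserves the hypotheses but flips the sign of $\partial\psi/\partial v_z$; the intended implicit hypothesis is that $\psi<0$ on $D$ (in the application $\psi=2G_D(\cdot,z_0)$), and then your one-sided difference quotient $\bigl(\psi(z)-\psi(z-h v_z)\bigr)/h=-\psi(z-h v_z)/h\ge 0$ for small $h>0$, together with $C^1$ regularity and $|d\psi|\neq0$, gives $\partial\psi/\partial v_z>0$ and removes the absolute value. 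With that contextual hypothesis made explicit, your proof is complete.
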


 We recall the following coarea formula.
\begin{Lemma}[see \cite{federer}]
	\label{l:coarea}Suppose that $\Omega$ is an open set in $\mathbb{R}^n$ and $u\in C^1(\Omega)$. Then for any $g\in L^1(\Omega)$, 
	$$\int_{\Omega}g(x)|\nabla u(x)|dx=\int_{\mathbb{R}}\left(\int_{u^{-1}(t)}g(x)dH_{n-1}(x)\right)dt,$$
	where $H_{n-1}$ is the $(n-1)$-dimensional Hausdorff measure.
\end{Lemma}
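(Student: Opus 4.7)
The plan is to reduce this to the standard chart-by-chart computation that underlies Federer's area/coarea identity, so what I would really be writing is a textbook proof sketch rather than any new argument.

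First I would reduce to the case $g\ge 0$ by splitting $g = g^+ - g^-$ (both sides are linear in $g$), and I would reduce further to $g$ of compact support by a monotone convergence / exhaustion argument, using a partition of unity $\{\chi_\alpha\}$ subordinate to a locally finite cover of $\Omega$ by balls. So it suffices to prove the identity for $g\chi_\alpha$ supported in one small ball $B_\alpha$.

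The next and main step is a local change of variables on the non-critical set $\Omega\setminus C$, where $C:=\{x\in\Omega : \nabla u(x)=0\}$. At any $x_0\notin C$, some partial derivative, say $\partial u/\partial x_1$, is nonzero, and by the implicit function theorem the map $\Phi(x)=(u(x),x_2,\dots,x_n)$ is a $C^1$ diffeomorphism on a neighborhood $U$ of $x_0$, with Jacobian $|\partial u/\partial x_1|$. After refining the partition of unity so that each $B_\alpha\Subset U_\alpha$ of this type, Fubini on $\Phi_\alpha(U_\alpha)$ gives
\[
\int_{U_\alpha} g(x)\,\bigl|\tfrac{\partial u}{\partial x_1}(x)\bigr|\,dx = \int_{\mathbb R}\!\!\int_{u^{-1}(t)\cap U_\alpha} g(x)\,dH_{n-1}(x)\,dt,
\]
because on each level slice $\{u=t\}\cap U_\alpha$ the measure $dx_2\cdots dx_n$ pulled back by $\Phi_\alpha$ is exactly the $(n-1)$-Hausdorff measure (this is the standard graph-form Jacobian computation: the slice is the graph $x_1=\phi(t,x_2,\dots,x_n)$ with $\partial\phi/\partial x_i=-(\partial u/\partial x_i)/(\partial u/\partial x_1)$, whose surface-area element equals $|\nabla u|/|\partial u/\partial x_1|$, converting $|\partial u/\partial x_1|$ on the left into $|\nabla u|$ on the right). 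Summing over $\alpha$ gives the coarea identity on $\Omega\setminus C$.

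Finally, on the critical set $C$, the left side contributes $\int_C g\,|\nabla u|\,dx = 0$ since $|\nabla u|\equiv 0$ there. For the right side, Sard's theorem (valid because $u\in C^1$ and we are projecting to $\mathbb R^1$, so one needs only the mild $C^1$ version that the set of critical values has Lebesgue measure zero) gives $|u(C)|=0$; hence $\int_{\mathbb R}\int_{u^{-1}(t)\cap C} g\,dH_{n-1}\,dt$ vanishes as long as one checks that the inner integral is Lebesgue measurable in $t$, which follows from Tonelli applied to the product-measurable function $g(x)\mathbb{I}_{C}(x)$ after a standard approximation. The step I expect to be delicate is precisely this measurability/Sard part and the bookkeeping that turns the local $|\partial u/\partial x_1|$ Jacobians into the global $|\nabla u|$ factor; the diffeomorphism step itself is routine.
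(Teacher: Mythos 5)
The paper does not prove this lemma at all; it is quoted directly from Federer, so your proposal has to stand on its own as a textbook-style proof. Your treatment of the non-critical set is the standard and correct argument: the local diffeomorphism $\Phi(x)=(u(x),x_2,\dots,x_n)$, Fubini, and the graph-Jacobian computation converting $|\partial u/\partial x_1|$ into $|\nabla u|$ via the surface element $|\nabla u|/|\partial u/\partial x_1|$ all go through, and the partition-of-unity bookkeeping and measurability of the sliced integrals are routine for $g\ge 0$ by monotone convergence.

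The genuine gap is your handling of the critical set $C=\{\nabla u=0\}$. There is no ``mild $C^1$ version'' of Sard's theorem for maps $\mathbb{R}^n\to\mathbb{R}$ when $n\ge 2$: Sard requires $C^{n}$ regularity in this codimension, and Whitney's classical example exhibits a $C^1$ function on $\mathbb{R}^2$ whose set of critical values contains an interval, so your claim that $|u(C)|=0$ is simply false in the stated generality (you have the difficulty backwards: projecting to $\mathbb{R}^1$ is the worst case for Sard, not the easiest). What is true, and what the proof actually needs, is the weaker statement
\begin{equation*}
\int_{\mathbb{R}} H_{n-1}\bigl(u^{-1}(t)\cap C\bigr)\,dt=0,
\end{equation*}
i.e.\ for a.e.\ $t$ the level set meets $C$ in an $H_{n-1}$-null set, even though the set of such $t$ may have positive measure. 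This does not follow from any measure statement about $u(C)$ and requires its own argument; the standard route (Federer, or Evans--Gariepy, Section 3.4) proves it by a perturbation/approximation device, e.g.\ comparing $u$ with auxiliary maps of the form $(u,\varepsilon\,\pi)$ and applying the area formula, or by direct covering estimates on the set where the gradient is small, and then letting $\varepsilon\to 0$. With that lemma in place of the Sard appeal, your outline assembles into the standard proof; without it, the critical-set step fails as written.
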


Denote that 
$$D_t:=\{z\in D:2G_D(z,z_0)<-t\}$$ for any $t\ge0$. The following two lemmas will be used in the proof of Proposition \ref{l:OSH=H}. 
\begin{Lemma} [see \cite{GY-saitohfinite}] \label{l:0-4v2}Let $\varphi$ be a positive Lebesgue measurable function on $U\cap \overline D$ satisfying that  $\lim_{z\rightarrow\tilde z}\varphi(z)=\varphi(\tilde z)$ for any $\tilde z\in\partial D$. Then 
\begin{equation}
	\nonumber
	\int_{\partial D}|f|^2\varphi|dz|=\lim_{t\rightarrow0+0}\int_{\partial D_t}|f|^2\varphi|dz|
\end{equation}
	holds for any $f\in H_{(c)}^2(D)$.
\end{Lemma}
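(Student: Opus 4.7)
The plan is to parametrize each level curve $\partial D_t$ as a normal perturbation of $\partial D$ and then apply dominated convergence based on the nontangential maximal function from classical Hardy space theory.

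First, because $D$ is bounded by finitely many analytic Jordan curves, the Green function $G_D(\cdot,z_0)$ extends real-analytically across $\partial D$ with nowhere vanishing gradient there (in particular $\partial G_D/\partial v_z>0$ on $\partial D$). Consequently, for every sufficiently small $t>0$, the level set $\partial D_t=\{2G_D(\cdot,z_0)=-t\}$ is a disjoint union of smooth simple closed curves, one close to each component of $\partial D$, and the normal flow
$$\Phi_t:\partial D\to\partial D_t,\qquad \tilde z\mapsto \tilde z-s(\tilde z,t)\,v_{\tilde z},$$
sending $\tilde z\in\partial D$ to the first intersection of its inner normal with $\{2G_D=-t\}$, is a smooth diffeomorphism. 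Both the displacement $s(\tilde z,t)$ and $|J_t(\tilde z)-1|$ tend to $0$ uniformly in $\tilde z$ as $t\to 0^+$, where $J_t$ denotes the arc-length Jacobian of $\Phi_t$.

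Second, the change of variables yields
$$\int_{\partial D_t}|f|^2\varphi\,|dz|=\int_{\partial D}\bigl|f(\Phi_t(\tilde z))\bigr|^2\varphi(\Phi_t(\tilde z))\,J_t(\tilde z)\,|d\tilde z|.$$
By classical Hardy space theory on finitely connected analytically bounded planar regions (see \cite{duren}), each $f\in H^2_{(c)}(D)$ possesses a nontangential maximal function $Nf\in L^2(\partial D)$ satisfying $|f(w)|\le Nf(\tilde z)$ whenever $w$ lies in a fixed nontangential cone at $\tilde z$. Since $\Phi_t(\tilde z)$ moves along the inner normal, it stays inside such a cone for all sufficiently small $t$, so $|f(\Phi_t(\tilde z))|\le Nf(\tilde z)$. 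The assumed continuity of $\varphi$ at each point of $\partial D$, together with compactness of $\partial D$, supplies a uniform bound $\varphi(\Phi_t(\tilde z))\le C$ for all $\tilde z\in\partial D$ and all small $t$; combined with $J_t\to 1$ uniformly, the integrands are dominated by $C'(Nf)^2\in L^1(\partial D)$.

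Third, by Fatou's nontangential boundary value theorem $f(\Phi_t(\tilde z))\to f_*(\tilde z)$ a.e.\ on $\partial D$, the hypothesis on $\varphi$ gives $\varphi(\Phi_t(\tilde z))\to\varphi(\tilde z)$ a.e., and $J_t(\tilde z)\to 1$. The dominated convergence theorem then delivers
$$\lim_{t\to 0^+}\int_{\partial D_t}|f|^2\varphi\,|dz|=\int_{\partial D}|f_*|^2\varphi\,|d\tilde z|=\int_{\partial D}|f|^2\varphi\,|dz|,$$
under the standard convention identifying $f$ with $f_*$ on $\partial D$. The main technical obstacle is producing the normal-flow parametrization $\Phi_t$ with uniformly controlled Jacobian; this is where the analytic regularity of $\partial D$ (and the resulting real-analytic extension of $G_D$ across the boundary) is essential. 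Once the geometric setup is in place, the proof reduces to the routine dominated-convergence argument above.
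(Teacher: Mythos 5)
The paper does not actually prove this lemma: it is quoted verbatim from \cite{GY-saitohfinite}, so there is no in-paper argument to compare yours against. Judged on its own, your normal-flow argument is sound in outline and all three ingredients are used correctly: the analytic boundary does give a real-analytic extension of $G_D(\cdot,z_0)$ with nonvanishing gradient, hence a well-defined map $\Phi_t$ with displacement and Jacobian tending to $1$ uniformly; the hypothesis on $\varphi$ does give both a uniform bound near $\partial D$ (by compactness) and pointwise convergence $\varphi(\Phi_t(\tilde z))\to\varphi(\tilde z)$; and Fatou's theorem gives the a.e.\ convergence of $|f\circ\Phi_t|^2$ since the normal segment lies in every nontangential cone. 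The one step you should not pass off as a bare citation is the domination: the statement that the nontangential maximal function $Nf$ of an $f\in H^2_{(c)}(D)$ lies in $L^2(\partial D)$ is classical for the disk, but $D$ here is finitely connected, and Duren's theorem does not literally apply; you need either a local conformal transfer to the disk near each boundary component, or the Poisson-kernel estimate $N(P[g])\le C\,Mg$ for the (smooth-domain) Poisson kernel applied to $\mathrm{Re}\,f$ and $\mathrm{Im}\,f$ (note that the harmonic-majorant route $|f|^2\le u_f$ only gives a weak-$L^1$ dominant, which is not enough for dominated convergence). With that step supplied, your proof is complete; an alternative that avoids maximal functions altogether is to combine a.e.\ convergence with convergence of the weighted $L^2$ norms $\int_{\partial D_t}|f|^2(\partial G_D/\partial v)^{-1}|dz|$ (Lemma \ref{l:0-1}(a)) and a Riesz--Scheff\'e type argument.
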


\begin{Lemma}[see \cite{GY-weightedsaitoh}]
	\label{l:01a}
	Let $f$ be a holomorphic function on $D$.   Assume that 
	\begin{equation}
		\nonumber \liminf_{r\rightarrow1-0}\frac{\int_{\{z\in D:2G_{D}(z,z_0)\ge\log r\}}|f(z)|^2d\lambda_D}{1-r}<+\infty,
	\end{equation}
	then we have $f\in H_{(c)}^{2}(D)$, where $d\lambda_D$ is the Lebesgue measure on $D$.
\end{Lemma}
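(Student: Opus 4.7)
The plan is to convert the interior integral hypothesis into a uniform boundary-integral bound on a sequence of approximating subregions $D_{s_n}^0$, and then to construct a harmonic majorant of $|f|^2$ on $D$ via a normal-family argument. First I would recast the hypothesis by substituting $t=-\log r$: since $1-r\sim t$ as $t\to 0+$ and $\{2G_D(\cdot,z_0)\geq\log r\}$ agrees with $D\setminus D_t$ up to a measure-zero set, the hypothesis becomes
\[
\liminf_{t\to 0+}\frac{1}{t}\int_{D\setminus D_t}|f|^2\,d\lambda_D<+\infty.
\]

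The next step is to apply the coarea formula (Lemma~\ref{l:coarea}) with $u=2G_D(\cdot,z_0)$ on $D\setminus\overline{D_t}$. By Lemma~\ref{l:partial}, on each smooth level curve $\partial D_s$ one has $|\nabla(2G_D)|=2\,\partial G_D/\partial v_z$, so
\[
\int_{D\setminus D_t}|f|^2\,d\lambda_D=\frac{1}{2}\int_0^t\!\!\int_{\partial D_s}|f|^2\left(\frac{\partial G_D(z,z_0)}{\partial v_z}\right)^{-1}|dz|\,ds.
\]
Because the critical values of $G_D(\cdot,z_0)$ form a discrete subset of $(-\infty,0)$ (Lemma~\ref{l:discrete}), a mean-value argument in $s$ then produces a sequence $s_n\to 0+$ of regular values along which
\[
\int_{\partial D_{s_n}}|f|^2\left(\frac{\partial G_D}{\partial v_z}\right)^{-1}|dz|\leq C
\]
uniformly in $n$.

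To pass from this uniform Hardy-type bound to a harmonic majorant on $D$, I would use that $\partial G_D(\cdot,z_0)/\partial v_z=|\nabla G_D|$ is continuous and strictly positive on $\partial D$, hence bounded above and below by positive constants on a one-sided neighborhood of $\partial D$ and in particular on $\partial D_{s_n}$ for $n$ large; thus also $\int_{\partial D_{s_n}}|f|^2(\partial G_D/\partial v_z)|dz|\leq C'$. Let $D_{s_n}^0$ denote the connected component of $D_{s_n}$ containing $z_0$. By uniqueness of the Green function on $D_{s_n}^0$, $G_{D_{s_n}^0}(\cdot,z_0)=G_D(\cdot,z_0)+s_n/2$, so $\partial G_{D_{s_n}^0}/\partial v_z=\partial G_D/\partial v_z$ on $\partial D_{s_n}^0$. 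Let $u_n$ be the harmonic extension of $|f|^2$ from $\partial D_{s_n}^0$ to $D_{s_n}^0$: subharmonicity of $|f|^2$ yields $u_n\geq|f|^2$ on $D_{s_n}^0$, and the Green-function mean-value formula combined with the previous bound gives $u_n(z_0)\leq C'/(2\pi)$. Since $u_n\geq 0$, Harnack's inequality implies $\{u_n\}$ is locally uniformly bounded on $D$; passing to a locally uniformly convergent subsequence produces a harmonic $u$ on $D$ with $|f|^2\leq u$, whence $f\in H^2_{(c)}(D)$.

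I expect the main obstacle to be the bookkeeping between the two boundary weights on $\partial D_{s_n}$: the coarea step naturally produces the weight $(\partial G_D/\partial v_z)^{-1}$, whereas the Green-function mean-value formula demands $\partial G_D/\partial v_z$, and crossing between them depends on the strict positivity and uniform continuity of this normal derivative on a one-sided neighborhood of $\partial D$. Once that comparison is justified and the extraction of regular values $s_n$ is carried out, the harmonic-majorant construction via Harnack and normal-family compactness is essentially routine.
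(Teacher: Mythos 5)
The paper does not prove Lemma \ref{l:01a} here; it is quoted from \cite{GY-weightedsaitoh}, so there is no internal proof to compare against. Your argument is correct and is essentially the standard (and, in substance, the cited reference's) route: rewrite the hypothesis as $\liminf_{t\to0+}\frac1t\int_{D\setminus D_t}|f|^2<+\infty$, use the coarea formula and $|\nabla(2G_D)|=2\,\partial G_D/\partial v_z$ on regular level curves to select levels $s_n\to0$ with uniformly bounded boundary integrals, pass between the weights $(\partial G_D/\partial v_z)^{\pm1}$ using that $|\nabla G_D|$ is bounded above and below on a collar (analytic boundary, no critical values near $0$ by Lemma \ref{l:discrete}), note $G_{D_{s_n}}(\cdot,z_0)=G_D(\cdot,z_0)+s_n/2$ so the harmonic-measure representation bounds the Dirichlet extensions $u_n$ at $z_0$, and conclude with Harnack plus a normal-family extraction to obtain a harmonic majorant of $|f|^2$ on $D$, i.e.\ $f\in H^2_{(c)}(D)$.
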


   Denote the set of all critical values of $G_{D}(\cdot,z_0)$ by $N\subset(-\infty,0)$. 

\begin{Lemma}\label{l:discrete}
	$N\subset\subset(-\infty,0)$ is a discrete set.
\end{Lemma}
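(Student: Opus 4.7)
The plan is to show that the critical points of $G_D(\cdot,z_0)$ in $D$ form a finite set; then the set of critical values $N$ is automatically finite, discrete, and relatively compact in $(-\infty,0)$. The main tool is the Schwarz reflection principle, applicable because $\partial D$ consists of finitely many analytic Jordan curves.

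First, I would extend $G_D(\cdot,z_0)$ harmonically across $\partial D$. Since each boundary component is a real-analytic Jordan curve and $G_D(\cdot,z_0)$ vanishes on $\partial D$, the Schwarz reflection principle yields a harmonic extension $\tilde G$ of $G_D(\cdot,z_0)$ to an open neighborhood $\Omega$ of $\overline D\setminus\{z_0\}$. In particular, $\partial_z \tilde G$ is a holomorphic function on $\Omega\setminus\{z_0\}$, with a simple pole at $z_0$ coming from the logarithmic singularity $G_D(z,z_0)=\log|z-z_0|+h(z)$ with $h$ harmonic near $z_0$; concretely, $\partial_z\tilde G=\frac{1}{2(z-z_0)}+\partial_z h$ near $z_0$. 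Moreover, $\partial_z\tilde G$ is not identically zero on $\Omega$, since otherwise $G_D$ would be anti-holomorphic on $D\setminus\{z_0\}$, forcing it to be locally constant, which contradicts the existence of its logarithmic singularity at $z_0$.

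Next, I would localize the critical set. A point $z\in D\setminus\{z_0\}$ is a critical point of $G_D(\cdot,z_0)$ exactly when $\partial_z\tilde G(z)=0$. By the identity principle, the zero set of $\partial_z\tilde G$ in $\Omega\setminus\{z_0\}$ is discrete. Two further observations isolate this set: (i) by Hopf's boundary-point lemma the outer normal derivative of $G_D(\cdot,z_0)$ on $\partial D$ is strictly positive (since $G_D<0$ on $D$ and $=0$ on $\partial D$), so no critical points lie on $\partial D$; (ii) from the expansion $\partial_z\tilde G=\frac{1}{2(z-z_0)}+O(1)$ near $z_0$, the critical points cannot accumulate at $z_0$ either. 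Thus the critical set is contained in a compact subset of $D\setminus\{z_0\}$, and being discrete there it must be finite.

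Finally, $N$ is the image of this finite set of critical points under $2G_D(\cdot,z_0)$, so $N$ is a finite subset of $(-\infty,0)$. Finiteness immediately gives both discreteness and the relative compactness $N\subset\subset(-\infty,0)$ (it is bounded away from $0$ because $G_D(\cdot,z_0)$ is strictly negative on the compact set of critical points, and bounded away from $-\infty$ because this set stays away from $z_0$). The main technical point to be careful about is the Schwarz reflection step and the consequent non-vanishing/holomorphy of $\partial_z\tilde G$ across $\partial D$; everything else follows from standard complex-analytic facts.
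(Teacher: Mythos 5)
Your proof is correct and follows essentially the same route as the paper: extend $G_D(\cdot,z_0)$ harmonically across the analytic boundary so that $|\nabla G_D|\neq 0$ near $\partial D$, use the logarithmic singularity to keep critical points away from $z_0$, and combine this with the discreteness of the zeros of the holomorphic function $\partial_z G_D$ on $D\setminus\{z_0\}$. You merely make explicit the ingredients the paper leaves implicit (Schwarz reflection, Hopf's lemma, identity principle) and note the resulting finiteness of the critical set, which is exactly what yields $N\subset\subset(-\infty,0)$ discrete.
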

\begin{proof}
	As $D$ is a planar region bounded by finite analytic Jordan curves, $G_{D}(\cdot,z_0)$ can be extended to a harmonic function on $U\cup D\backslash\{z_0\}$ and $|\nabla G_{D}(\cdot,z_0)|\not=0$ on $\partial D$, where $U$ is a neighborhood of $\partial D$. Note that $G_{D}(\cdot,z_0)-\log|z-z_0|$ is harmonic near $z_0$. Hence we have $N\subset\subset(-\infty,0)$. Note that $G_{D}(\cdot,z_0)$ is harmonic on $D\backslash\{z_0\}$, then the set of all critical points $G_{D}(\cdot,z_0)$ is a discrete subset of $D\backslash\{z_0\}$. Thus, we have $N\subset\subset(-\infty,0)$ is a discrete set.
\end{proof}
 
  For any $t\in[0,+\infty)\backslash -2N$, $D_t$ is a planar region bounded by finite analytic Jordan curves. The following proposition shows that for planar region $D_t$ and trivial weight $\rho\equiv1$, the Ohsawa-Saitoh-Hardy space is just the conjugate Hardy $H^2$ space.

\begin{Proposition}
	\label{l:OSH=H}
	Let $t\in[0,+\infty)\backslash-2N$. Then 
	$$OSH^2(D_t,2G_{D}(\cdot,z_0)+t)=\{fdz:f\in H^2_{(c)}(D_t)\},$$ and 
	$$\|fdz\|^2_{OSH^2(D_t,2G_{D}(\cdot,z_0)+t)}=2\pi\|f\|_{H^2_{(c)}(D_t)}^2$$
	for any $f\in H^2_{(c)}(D_t),$ where $\|f\|_{H^2_{(c)}(D)}:=\left(\frac{1}{2\pi}\int_{\partial D}|f(z)|^2\left(\frac{\partial G_{D}(z,z_0)}{\partial v_z} \right)^{-1}|dz|\right)^{\frac{1}{2}}$.
\end{Proposition}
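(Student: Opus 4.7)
The key simplification is to observe that $G_{D_t}(z,z_0)=G_D(z,z_0)+t/2$ on $D_t$: the difference is harmonic on $D_t$ (same logarithmic singularity at $z_0$), continuous up to the regular boundary $\partial D_t=\{2G_D=-t\}$, and identically $-t/2$ there, so it equals $-t/2$ everywhere on $D_t$ by the maximum principle. Consequently $\psi_t=2G_D+t=2G_{D_t}$ on $D_t$, and $D_{t+s}$ is exactly the sub-level set $\{z\in D_t:2G_{D_t}(z,z_0)<-s\}$. Also, since $f$ is a holomorphic $(1,0)$-form, $|fdz|^2=2|f|^2\,d\lambda_D$.

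For the forward inclusion and norm equality, take $f\in H^2_{(c)}(D_t)$. Since $t\notin -2N$, Lemma \ref{l:discrete} ensures $|\nabla G_D|$ is continuous and strictly positive on a neighborhood of $\partial D_t$. Applying the coarea formula (Lemma \ref{l:coarea}) with $u=2G_D$ on such a neighborhood,
\begin{equation*}
\int_{D_t\setminus D_{t+s}}2|f|^2\,d\lambda_D=\int_0^s\int_{\partial D_{t+u}}|f|^2|\nabla G_D|^{-1}|dz|\,du,
\end{equation*}
so $\tfrac{1}{s}\int_{\{-s<\psi_t<0\}}|fdz|^2$ is the average on $[0,s]$ of the map $u\mapsto\int_{\partial D_{t+u}}|f|^2|\nabla G_D|^{-1}|dz|$. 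Lemma \ref{l:0-4v2}, applied to the region $D_t$ with its own Green function $G_{D_t}$ (so that its ``$D_s$'' is our $D_{t+s}$) and the continuous positive weight $\varphi=|\nabla G_D|^{-1}$ on a neighborhood of $\partial D_t$, yields right-continuity of the inner integral at $u=0$; hence the average converges to $\int_{\partial D_t}|f|^2|\nabla G_D|^{-1}|dz|$. Combining with Lemma \ref{l:partial} (which identifies $|\nabla G_D|=\partial G_D/\partial v_z$ on $\partial D_t$) gives
\begin{equation*}
\|fdz\|^2_{OSH^2(D_t,\psi_t)}=\int_{\partial D_t}|f|^2(\partial G_D/\partial v_z)^{-1}|dz|=2\pi\|f\|^2_{H^2_{(c)}(D_t)},
\end{equation*}
so in particular $fdz\in OSH^2(D_t,\psi_t)$, and the $\liminf$ in the definition is actually a limit.

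For the reverse inclusion, suppose $fdz\in OSH^2(D_t,\psi_t)$, equivalently $\liminf_{s\to 0+}\tfrac{1}{s}\int_{D_t\setminus D_{t+s}}|f|^2\,d\lambda_D<+\infty$. Using again $D_{t+s}=\{z\in D_t:2G_{D_t}(z,z_0)<-s\}$ and the elementary $(1-e^{-s})/s\to 1$ as $s\to 0+$, this is precisely the hypothesis of Lemma \ref{l:01a} applied to $D_t$ with Green function $G_{D_t}$, so $f\in H^2_{(c)}(D_t)$; the norm equality then follows from the forward direction. The main obstacle is bookkeeping: the coarea decomposition is naturally written on $G_D$-level sets, while Lemmas \ref{l:0-4v2} and \ref{l:01a} are stated for the Green function of the ambient region in question. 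The Green-function identity $G_{D_t}=G_D+t/2$ of the first paragraph is exactly what lets these two pictures coincide and makes the application of the two lemmas valid.
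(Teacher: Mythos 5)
Your proposal is correct and takes essentially the same route as the paper's proof: Lemma \ref{l:01a} gives the inclusion $OSH^2(D_t,2G_D(\cdot,z_0)+t)\subset\{fdz:f\in H^2_{(c)}(D_t)\}$, while the coarea formula (Lemma \ref{l:coarea}) combined with Lemma \ref{l:0-4v2} and Lemma \ref{l:partial} gives the reverse inclusion and the norm identity. Your explicit identity $G_{D_t}=G_D+t/2$ is simply a spelled-out version of the paper's one-line reduction to the case $t=0$.
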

\begin{proof}
	As $D$ is a planar regular region bounded by finite analytic Jordan curves and $G_D(\cdot,z_0)$ is harmonic on $D\backslash\{z_0\}$, it suffices to prove the case $t=0$.
	Let $f$ be any holomorphic function on $D$. 
	
	If $fdz\in OSH^2(D,2G_{D}(\cdot,z_0))$, then we have 
	\begin{equation}
		\nonumber
		\begin{split}
		\liminf_{B\rightarrow 0+0}\frac{1}{B}\int_{\{-B\le\psi<0\}}|f|^2d\lambda_D 	&=\liminf_{B\rightarrow 0+0}\frac{1}{2B}\int_{\{-B<\psi<0\}}|fdz|^2 \\
		&=\frac{1}{2}\|f\|^2_{OSH^2(D,\psi)}\\
		&<+\infty,
		\end{split}
	\end{equation}
	where $d\lambda_D$ is the Lebesgue measure on $D$ and $\psi=2G_{D}(z,z_0)$, which implies that $f\in H^2_{(c)}(D)$ by Lemma \ref{l:01a}.
	
	In the following, assume that $f\in H^2_{(c)}(D)$.
Using Lemma \ref{l:coarea} and Lemma \ref{l:0-4v2}, we have 
\begin{equation}
	\nonumber
	\begin{split}
		\lim_{B\rightarrow 0+0}\frac{1}{B}\int_{\{-B<\psi<0\}}|f|^2d\lambda_D&=\lim_{B\rightarrow 0+0}\frac{\int_{0}^{B}\left(\int_{\{\psi=-t\}}|f|^2|\nabla \psi|^{-1}|dz|\right)dt}{B}\\
		&=\frac{1}{2}\int_{\partial D}|f(z)|^2\left(\frac{\partial G_D(z,z_0)}{\partial v_z}\right)^{-1}|dz|,
	\end{split}
\end{equation}
which implies that $f\in OSH^2(D,2G_{D}(z,z_0))$ and 
$$2\pi\|f\|_{H^2_{(c)}(D)}^2=\int_{\partial D}|f(z)|^2\left(\frac{\partial G_D(z,z_0)}{\partial v_z}\right)^{-1}|dz|=\|fdz\|^2_{OSH^2(D,2G_{D}(z,z_0))}.$$

Thus, Proposition \ref{l:OSH=H} holds.
\end{proof}

According to the above proposition, for $t\in-2N$, we define that $$H^2_{(c)}(D_t):=\{f\in\mathcal{O}(D_t):fdz\in OSH^2(D_t,2G_{D}(\cdot,z_0)+t)\}$$
and 
$$\|f\|^2_{H^2_{(c)}(D_t)}:=\frac{1}{2\pi}\|fdz\|^2_{OSH^2(D_t,2G_{D}(z,z_0)+t)}$$
for any $f\in H^2_{(c)}(D_t)$.
For any $t\ge0$, The conjugate Hardy $H^2$ kernel is defined by
$$\hat K_{D_t}(z_0):=\frac{1}{\inf\left\{\|f\|^2_{H^2_{(c)}(D_t)}:f\in H^2_{(c)}(D_t)\,\&\,f(z_0)=1\right\}}.$$

We recall an existence and uniqueness property for $\hat K_{D_t}(z_0)$.
\begin{Lemma}
	[see \cite{GY-saitohfinite}]
	\label{l:uni}
	For any $t\in [0,+\infty)\backslash-2N,$ there exists a unique holomorphic function $f_t\in H^2_{(c)}(D_t)$ such that $f_t(z_0)=1$ and $\|f_t\|^2_{H_{(c)}^2(D_t)}=\frac{1}{\hat K_{D_t}(z_0)}$.
\end{Lemma}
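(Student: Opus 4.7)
The plan is to prove this as a standard Hilbert-space minimization argument applied to $D_t$ in place of $D$. Since $t \in [0,+\infty)\setminus -2N$, $D_t$ is itself a planar region bounded by finitely many analytic Jordan curves, so the preparatory facts about $H^2_{(c)}(D)$, in particular Lemma \ref{l:0-1}, apply verbatim to $H^2_{(c)}(D_t)$.

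First I would check that $H^2_{(c)}(D_t)$ equipped with $\|\cdot\|_{H^2_{(c)}(D_t)}$ is a Hilbert space. On the compact analytic boundary $\partial D_t$ the weight $\partial G_D(\cdot,z_0)/\partial v_z$ is continuous and strictly positive, so its reciprocal is bounded above and below by positive constants and the $H^2_{(c)}(D_t)$-norm of $f$ is equivalent to the usual $L^2(\partial D_t)$-norm of $f_*$. By Lemma \ref{l:0-1}(a) every element of $H^2_{(c)}(D_t)$ has boundary values $f_* \in L^2(\partial D_t)$, and by Lemma \ref{l:0-1}(b) the image $\gamma(H^2_{(c)}(D_t))$ is the annihilator in $L^2(\partial D_t)$ of the family of holomorphic functions on neighborhoods of $\overline{D_t}$, hence a closed subspace. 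The Cauchy-type inverse formula in Lemma \ref{l:0-1}(c) shows $\gamma$ is injective, so $H^2_{(c)}(D_t)$ is isometrically isomorphic to a closed subspace of a Hilbert space, and is itself a Hilbert space.

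Next I would verify that point evaluation at $z_0$ is a bounded linear functional on $H^2_{(c)}(D_t)$. This is immediate from the reproducing formula
\begin{equation*}
f(z_0)=\frac{1}{2\pi\sqrt{-1}}\int_{\partial D_t}\frac{f_*(z)}{z-z_0}\,dz
\end{equation*}
of Lemma \ref{l:0-1}(c) together with Cauchy--Schwarz and the fact that $|z-z_0|^{-1}$ is bounded on the compact set $\partial D_t$ (since $z_0 \in D_t$).

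With these two ingredients in place the conclusion follows from abstract Hilbert space theory. The affine set $A := \{f \in H^2_{(c)}(D_t) : f(z_0) = 1\}$ is closed as the preimage of $\{1\}$ under the continuous functional of the previous step, and is nonempty since the constant function $1$ lies in it. The parallelogram identity applied to $A$ shows that any minimizing sequence for $\|\cdot\|^2_{H^2_{(c)}(D_t)}$ in $A$ is Cauchy and therefore converges to a unique minimizer $f_t \in A$; by definition of $\hat K_{D_t}(z_0)$, $\|f_t\|^2_{H^2_{(c)}(D_t)} = 1/\hat K_{D_t}(z_0)$. The main (and only) real obstacle is the first step, namely recognizing $H^2_{(c)}(D_t)$ as a Hilbert space under this specific weighted boundary norm; once that is done, the existence and uniqueness of the minimizer is routine.
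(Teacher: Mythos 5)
The paper does not prove Lemma \ref{l:uni} at all: it is quoted from \cite{GY-saitohfinite}, so there is no internal proof to compare against. Your argument is correct and self-contained: for $t\notin -2N$ the level set $\partial D_t$ is a finite union of analytic Jordan curves on which $|\nabla G_D(\cdot,z_0)|\neq 0$, so the weight $\bigl(\partial G_D(\cdot,z_0)/\partial v_z\bigr)^{-1}$ is continuous and pinched between positive constants; Lemma \ref{l:0-1}(b) exhibits $\gamma\bigl(H^2_{(c)}(D_t)\bigr)$ as an intersection of kernels of continuous functionals on $L^2(\partial D_t)$, hence closed, and (c) gives injectivity of $\gamma$ and boundedness of evaluation at $z_0$, so the minimization over the nonempty closed affine set $\{f(z_0)=1\}$ yields a unique minimizer, whose squared norm is $1/\hat K_{D_t}(z_0)$ by definition. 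Two small points worth making explicit: the intrinsic conjugate Hardy theory for $D_t$ (Lemma \ref{l:0-1} applied to $D_t$) is phrased with $G_{D_t}(\cdot,z_0)$, and your use of the weight $\partial G_D(\cdot,z_0)/\partial v_z$ is justified because $G_{D_t}(\cdot,z_0)=G_D(\cdot,z_0)+\tfrac{t}{2}$ on $D_t$, so the two normal derivatives coincide on $\partial D_t$; and the bounded point evaluation also guarantees that the infimum defining $\hat K_{D_t}(z_0)$ is strictly positive, so the stated equality is not vacuous. With these remarks your proof is complete and is the standard route one would expect the cited reference to take.
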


The following lemma will be used in the proof of Lemma \ref{l:l-continous}.

\begin{Lemma}
	\label{l:extext}Let $\rho_1\in C([-1,1]\times[0,1])$ be a positive function. Let $\{t_j\}_{j\in\mathbb{Z}_{\ge1}}\in (0,1)$ be a decreasing  sequence satisfying $\lim_{j\rightarrow+\infty}t_j=0$, and let $f_{t_j}\in L^2[0,1]$ for any $j\ge1$, which satisfies that 
	\begin{equation}
		\label{eq:0630a}
		T:=\lim_{j\rightarrow+\infty}\int_0^1|f_{t_j}(x)|^2\rho_1(t_j,x)dx<+\infty.
	\end{equation}
	Then there exists a subsequence of $\{f_{t_j}\}$ (denoted also by $\{f_{t_j}\}$), which weakly converges to $f_0\in L^2[0,1]$ and satisfies that 
	$$\int_0^1|f_0|^2\rho_1(0,x)dx\le T$$
	and 
	$$\int_0^1f_0(x)\rho_2(0,x)dx=\lim_{j\rightarrow+\infty}\int_0^1f_{t_j}(x)\rho_2(t_j,x)dx$$
	for any $\rho_2\in C([-1,1]\times[0,1]).$
\end{Lemma}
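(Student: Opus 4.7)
The plan is to extract a weakly convergent subsequence in $L^2[0,1]$ and then upgrade the two desired conclusions via the uniform continuity of $\rho_1$ and $\rho_2$ on the compact product $[-1,1]\times[0,1]$.

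First I would observe that since $\rho_1$ is continuous and strictly positive on the compact set $[-1,1]\times[0,1]$, there exist constants $0<c\le C<+\infty$ with $c\le \rho_1\le C$ everywhere. The hypothesis \eqref{eq:0630a} then gives
$$c\int_0^1 |f_{t_j}|^2\,dx\le \int_0^1 |f_{t_j}|^2\rho_1(t_j,x)\,dx\longrightarrow T,$$
so $\{f_{t_j}\}$ is bounded in $L^2[0,1]$. By the Banach--Alaoglu theorem (reflexivity of $L^2$), a subsequence, still denoted $\{f_{t_j}\}$, converges weakly to some $f_0\in L^2[0,1]$.

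For the second conclusion, fix any $\rho_2\in C([-1,1]\times[0,1])$. Since $\rho_2$ is uniformly continuous, $\rho_2(t_j,\cdot)\to\rho_2(0,\cdot)$ uniformly on $[0,1]$, and $\rho_2(0,\cdot)\in L^2[0,1]$. Splitting
$$\int_0^1 f_{t_j}(x)\rho_2(t_j,x)\,dx=\int_0^1 f_{t_j}(x)\rho_2(0,x)\,dx+\int_0^1 f_{t_j}(x)\bigl[\rho_2(t_j,x)-\rho_2(0,x)\bigr]dx,$$
the first term tends to $\int_0^1 f_0(x)\rho_2(0,x)\,dx$ by weak convergence, while the second is bounded by $\|f_{t_j}\|_{L^2}\|\rho_2(t_j,\cdot)-\rho_2(0,\cdot)\|_{L^2}$ and therefore tends to $0$. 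This gives the stated limit.

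For the first (norm) conclusion, set $g_j(x):=f_{t_j}(x)\sqrt{\rho_1(t_j,x)}$, so that $\|g_j\|_{L^2}^2\to T$ and $\{g_j\}$ is bounded in $L^2[0,1]$. The uniform convergence $\sqrt{\rho_1(t_j,\cdot)}\to \sqrt{\rho_1(0,\cdot)}$ (use that $\rho_1$ is bounded below by $c>0$, so the square root is uniformly continuous on the range) together with the $L^2$-bound on $\{f_{t_j}\}$ yields
$$\bigl\|g_j-f_{t_j}\sqrt{\rho_1(0,\cdot)}\bigr\|_{L^2}\longrightarrow 0.$$
Multiplication by the bounded continuous function $\sqrt{\rho_1(0,\cdot)}$ is a continuous operator on $L^2$, hence weakly continuous, so $f_{t_j}\sqrt{\rho_1(0,\cdot)}\rightharpoonup f_0\sqrt{\rho_1(0,\cdot)}$. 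Combining these two facts gives $g_j\rightharpoonup f_0\sqrt{\rho_1(0,\cdot)}$ weakly in $L^2$. The weak lower semicontinuity of the $L^2$-norm then produces
$$\int_0^1 |f_0|^2\rho_1(0,x)\,dx=\bigl\|f_0\sqrt{\rho_1(0,\cdot)}\bigr\|_{L^2}^2\le \liminf_{j\to\infty}\|g_j\|_{L^2}^2=T,$$
as required. The only slightly delicate point is making sure to work with $g_j=f_{t_j}\sqrt{\rho_1(t_j,\cdot)}$ rather than $f_{t_j}$ directly, so that the $L^2$-norm bound matches $T$; otherwise everything is a routine combination of Banach--Alaoglu, uniform continuity on a compact product, and weak lower semicontinuity of the norm.
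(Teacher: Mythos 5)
Your proof is correct and takes essentially the same route as the paper: bound $\{f_{t_j}\}$ in $L^2[0,1]$ via the positive lower bound of $\rho_1$, pass to a weakly convergent subsequence, treat $\rho_2$ by splitting off $\rho_2(t_j,\cdot)-\rho_2(0,\cdot)$ and using uniform continuity, and obtain the norm inequality from weak lower semicontinuity applied to the weighted sequence $f_{t_j}\sqrt{\rho_1(t_j,\cdot)}$. The only cosmetic difference is that the paper extracts a second weak limit $\tilde f_0$ of $\rho_1(t_j,\cdot)^{1/2}f_{t_j}$ and identifies it with $f_0\,\rho_1(0,\cdot)^{1/2}$ by testing against continuous functions, whereas you get the same weak convergence directly from the uniform convergence of $\sqrt{\rho_1(t_j,\cdot)}$ and the weak continuity of multiplication by a bounded function; both steps are valid.
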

\begin{proof}
	As $\rho_1>0$ and $\rho_1\in C([-1,1]\times[0,1])$, inequality \eqref{eq:0630a} implies $$\lim_{j\rightarrow+\infty}\int_0^1|f_{t_j}(x)|^2dx<+\infty.$$
	Since $L^2[0,1]$ is a Hilbert space, there exists a subsequence of $\{f_{t_j}\}$ denoted also by $\{f_{t_j}\}$ such that $\{f_{t_j}\}$ weakly converges to $f_0\in L^2[0,1]$ and $\{\rho_1(t_j,\cdot)^{\frac{1}{2}}f_{t_j}\}$ weakly converges to $\tilde f_0\in L^2[0,1]$.
	
	For any $\rho_2\in C([-1,1]\times[0,1])$, we have 
	\begin{equation}
		\label{eq:0630b}
		\begin{split}
			&\lim_{j\rightarrow+\infty}\int_0^1f_{t_j}\rho_2(t_j,x)dx\\
			=&\lim_{j\rightarrow+\infty}\int_0^1f_{t_j}(x)\left(\rho_2(t_j,x)-\rho_2(0,x)\right)dx+\lim_{j\rightarrow+\infty}\int_0^1f_{t_j}(x)\rho_2(0,x)dx\\
			=&\int_0^1f_{0}(x)\rho_2(0,x)dx.
		\end{split}
	\end{equation}
For any $g\in C[0,1]$, equality \eqref{eq:0630b} deduces that 
\begin{equation}
	\nonumber
\lim_{j\rightarrow+\infty}\int_0^1f_{t_j}(x)\rho_1(t_j,x)^{\frac{1}{2}}g(x)dx
		=\int_0^1f_{0}(x)\rho_1(0,x)^{\frac{1}{2}}g(x)dx,
\end{equation}	
	which shows that $\tilde f_0=f_{0}(x)\rho_1(0,x)^{\frac{1}{2}}$ and 
	$$\int_0^1|f_0|^2\rho_1(0,x)dx\le\lim_{j\rightarrow+\infty}\int_0^1|f_{t_j}(x)|^2\rho_1(t_j,x)dx.$$
	
	Thus, Lemma \ref{l:extext} holds.
\end{proof}

The following lemma shows that $\hat K_{D_t}(z_0)$ is right continuous.

\begin{Lemma}
	\label{l:l-continous}
	For any $t_0\ge0$, we have
	$\lim_{t\rightarrow t_0+0}\hat K_{D_t}(z_0)=\hat K_{D_{t_0}}(z_0).$
\end{Lemma}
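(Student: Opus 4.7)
The plan is to establish the two one-sided bounds $\liminf_{t\to t_0+}\hat K_{D_t}(z_0)\ge\hat K_{D_{t_0}}(z_0)$ and $\limsup_{t\to t_0+}\hat K_{D_t}(z_0)\le\hat K_{D_{t_0}}(z_0)$ separately, first under the assumption $t_0\notin -2N$; since $-2N$ is discrete (Lemma \ref{l:discrete}), the critical case $t_0\in -2N$ reduces to this by taking sequences of regular values together with the coarea identity $\|f\|^2_{H^2_{(c)}(D_{t_0})}=\liminf_{B\to 0+}\frac{1}{B}\int_{t_0}^{t_0+B}\|f\|^2_{H^2_{(c)}(D_s)}\,ds$ (from Lemma \ref{l:coarea}) which relates the generalized $OSH^2$-definition at $t_0$ to the classical one nearby.

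For the easy direction, pick for each $\epsilon>0$ a near-minimizer $f\in H^2_{(c)}(D_{t_0})$ with $f(z_0)=1$ and $\|f\|^2_{H^2_{(c)}(D_{t_0})}<1/\hat K_{D_{t_0}}(z_0)+\epsilon$. Since $D_t\subset D_{t_0}$ for $t>t_0$, the restriction $f|_{D_t}$ is a valid competitor in the definition of $\hat K_{D_t}(z_0)$, so $1/\hat K_{D_t}(z_0)\le\|f|_{D_t}\|^2_{H^2_{(c)}(D_t)}$. The key observation is $G_{D_{t_0}}(z,z_0)=G_D(z,z_0)+t_0/2$ on $D_{t_0}$: the inner level sets of $G_{D_{t_0}}$ are precisely $\partial D_{s+t_0}$ and the weight $(\partial G_D/\partial v_z)^{-1}$ agrees with $(\partial G_{D_{t_0}}/\partial v_z)^{-1}$ on each such level set. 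Lemma \ref{l:0-4v2} applied to $D_{t_0}$ with this weight then yields $\|f|_{D_t}\|^2_{H^2_{(c)}(D_t)}\to\|f\|^2_{H^2_{(c)}(D_{t_0})}$ as $t\to t_0+$, and letting $\epsilon\to 0$ closes the direction.

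For the hard direction, take $t_n\to t_0+$ with $t_n\notin -2N$ and the unique extremals $f_n\in H^2_{(c)}(D_{t_n})$ from Lemma \ref{l:uni}, so that $f_n(z_0)=1$ and $\|f_n\|^2=1/\hat K_{D_{t_n}}(z_0)$. The easy direction bounds $\|f_n\|$ uniformly; combining the Cauchy representation (Lemma \ref{l:0-1}(c)), Cauchy-Schwarz, and the uniform bound of $\partial G_D/\partial v_z$ along $\partial D_{t_n}$ (valid since $t_0\notin -2N$) gives $|f_n|\le C_K$ on every compact $K\subset D_{t_0}$, so Montel extracts a locally uniform limit $f_0$ on $D_{t_0}$ with $f_0(z_0)=1$. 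I then parametrize each $\partial D_{t_n}$ over a fixed parameter interval via normal projection onto $\partial D_{t_0}$ (smooth because $t_0\notin -2N$) and apply Lemma \ref{l:extext} to the transferred Fatou boundary values $(f_n)_*$ with weight $\rho_1(t_n,\cdot)$ encoding $(\partial G_D/\partial v_z)^{-1}$ times the Jacobian; this produces a weak $L^2$-limit $\tilde f_0$ with $\int|\tilde f_0|^2\rho_1(t_0,\cdot)\le\liminf_n 2\pi\|f_n\|^2$. Identification of $\tilde f_0$ with $\gamma(f_0)$ and of $f_0$ as an element of $H^2_{(c)}(D_{t_0})$ is obtained by invoking the second conclusion of Lemma \ref{l:extext}: with $\rho_2$ of the form $(z-w)^{-1}$ (times Jacobian) and matching with Lemma \ref{l:0-1}(c), $f_0(w)$ is recovered as the Cauchy integral of $\tilde f_0$; with $\rho_2$ of the form $\phi(z)$ for $\phi$ holomorphic on a neighborhood of $\overline{D_{t_0}}$, Lemma \ref{l:0-1}(b) applied to each $D_{t_n}$ gives $\int_{\partial D_{t_n}}(f_n)_*\phi\,dz=0$, hence $\int_{\partial D_{t_0}}\tilde f_0\phi\,dz=0$, and Lemma \ref{l:0-1}(b) applied to $D_{t_0}$ places $\tilde f_0\in\gamma(H^2_{(c)}(D_{t_0}))$. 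Thus $1/\hat K_{D_{t_0}}(z_0)\le\|f_0\|^2_{H^2_{(c)}(D_{t_0})}\le\liminf_n 1/\hat K_{D_{t_n}}(z_0)$, proving the bound. The main obstacle is this identification step, which requires the auxiliary test functions in $\rho_2$ to fit the continuous-in-both-variables form demanded by Lemma \ref{l:extext}; a subsidiary subtlety is the critical case $t_0\in -2N$, handled by the coarea reduction mentioned above.
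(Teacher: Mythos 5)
For $t_0\notin -2N$ your two one-sided arguments are essentially the paper's proof: restrict a (near-)minimizer of $\hat K_{D_{t_0}}(z_0)$ to $D_t$ and use Lemma \ref{l:0-4v2} (valid because $|\nabla G_D|\neq 0$ near $\partial D_{t_0}$ and $G_{D_{t_0}}=G_D+t_0/2$) for $\liminf_{t\to t_0+}\hat K_{D_t}(z_0)\ge \hat K_{D_{t_0}}(z_0)$; and for the reverse inequality take the extremals from Lemma \ref{l:uni}, use the Cauchy representation and orthogonality relations of Lemma \ref{l:0-1}, extract weak limits of the boundary values via Lemma \ref{l:extext}, and conclude by lower semicontinuity of the norm --- this is exactly the paper's scheme (the paper localizes by cutting the collar into finitely many small pieces rather than by a global normal-projection parametrization, a cosmetic difference).

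The genuine gap is the critical case $t_0\in -2N$, which you dispose of in one sentence by the coarea identity $\|f\|^2_{H^2_{(c)}(D_{t_0})}=\liminf_{B\to 0+}\frac{1}{B}\int_{t_0}^{t_0+B}\|f\|^2_{H^2_{(c)}(D_s)}\,ds$. That identity is correct, but it only controls a \emph{liminf of averages} of the boundary norms of a fixed competitor $f$, and this does not yield the pointwise bound $\limsup_{t\to t_0+}\|f\|^2_{H^2_{(c)}(D_t)}\le\|f\|^2_{H^2_{(c)}(D_{t_0})}$ that your easy direction needs: near a critical value the weight $\left(\frac{\partial G_D}{\partial v_z}\right)^{-1}$ degenerates along the level curves approaching the saddle point on $\partial D_{t_0}$ (the line integral of $|\nabla G_D|^{-1}$ there diverges logarithmically as $t\to t_0+$), so boundary norms at individual levels are not dominated by their averages, and from the averaged bound you can only locate \emph{some} good levels $s\to t_0+$, which gives an inequality along a subsequence, not the required $\limsup$ bound over all $t$. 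The hard direction is equally problematic at $t_0\in -2N$: $\partial D_{t_0}$ is not a union of analytic Jordan curves, so Lemma \ref{l:0-1}, Lemma \ref{l:0-4v2}, the smooth normal projection onto $\partial D_{t_0}$, and the continuity and positivity hypotheses on $\rho_1$ in Lemma \ref{l:extext} (the limiting weight blows up at the critical point) are all unavailable, and nothing in your sketch replaces them to identify the limit as an element of the $OSH$-defined space $H^2_{(c)}(D_{t_0})$ with controlled norm. The paper instead handles $t_0\in -2N$ by a conformal reduction: since $G_{D_{t_0}}(\cdot,z_0)=G_D(\cdot,z_0)+t_0/2$, it takes a biholomorphism $\mu:\tilde D\to D_{t_0}$ onto a region bounded by finitely many analytic Jordan curves, uses Proposition \ref{l:OSH=H} and a change of variables to get $\frac{\pi}{\hat K_{D_{t_0+t}}(z_0)}=|\mu'(\tilde z_0)|^2\frac{\pi}{\tilde K(t)}$ for all $t\ge 0$, and then applies the already-proved regular case to $\tilde D$ at $t=0$. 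You need this (or an equivalent substitute) to complete the critical case; as written your reduction does not work.
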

\begin{proof}
Firstly, we consider the case $t_0\in [0,+\infty)\backslash-2N$.

	For any $t_0\in [0,+\infty)\backslash-2N,$ let $f_{t_0}$ be the unique holomorphic function such that $f_{t_0}(z_0)=1$ and 
	\begin{equation}
		\label{eq:0629a}
		\|f_{t_0}\|^2_{H_{(c)}^2(D_t)}=\frac{1}{\hat K_{D_{t_0}}(z_0)}
	\end{equation}
	 by Lemma \ref{l:uni}. Following from  Lemma \ref{l:0-4v2}, we have 
	 \begin{equation}
	 	\label{eq:0629b}
	 	\begin{split}
	 		 	\|f_{t_0}\|^2_{H_{(c)}^2(D_{t_0})}=\frac{1}{2\pi}\lim_{t\rightarrow t_0+0}\int_{\{2G_D(\cdot,z_0)=-t\}}|f_{t_0}|^2|\nabla G_D(\cdot,z_0)|^{-1}|dz|.
	 	\end{split}
	 \end{equation} 
	 As $\frac{1}{2\pi}\int_{\{2G_D(\cdot,z_0)=-t\}}|f_{t_0}|^2|\nabla G_D(\cdot,z_0)|^{-1}|dz|\ge \frac{1}{\hat K_{D_t}(z_0)}$, it follows from equality \eqref{eq:0629a} and \eqref{eq:0629b} that 
	 \begin{equation}
	 	\label{eq:0619c}
	 	\liminf_{t\rightarrow t_0+0}\hat K_{D_t}(z_0)\ge\hat K_{D_{t_0}}(z_0).
	 \end{equation}
	 
	Then, we will prove $\limsup_{t\rightarrow t_0+0}\hat K_{D_t}(z_0)\le\hat K_{D_{t_0}}(z_0).$ It suffices to consider the case $\limsup_{t\rightarrow t_0+0}\hat K_{D_t}(z_0)>0$.  There exists a decreasing sequence $\{t_j\}_{j\in \mathbb{Z}_{\ge1}}$ such that $\lim_{j\rightarrow+\infty}t_j=t_0$ and
	 \begin{equation}
	 	\label{eq:06299b}
	 	\limsup_{t\rightarrow t_0+0}\hat K_{D_t}(z_0)=\lim_{j\rightarrow+\infty}\hat K_{D_{t_j}}(z_0).
	 \end{equation}
Without loss of generality, assume that $|\nabla G_D(\cdot,z_0)|\not=0$ on $\{-t_1<2G_D(\cdot,z_0)<-t_0\}$. For any $j\ge1$, it follows from Lemma \ref{l:0-1} that  
$$\int_{\partial D_{t_j}}\frac{f_{t_j}(z)}{z-w}dz=f_{t_j}(w)$$
for any $w\in D_{t_j}$
and
$$\int_{\partial D_{t_j}}f_{t_j}(z)\phi(z)dz=0$$
 for any holomorphic function $\phi$ on a neighborhood of $\overline D_{t_j}$.
 Note
that $$\lim_{j\rightarrow+\infty}\frac{1}{2\pi}\int_{\partial D_{t_j}}|f_{t_j}|^2\left(\frac{\partial G_D(z,z_0)}{\partial v_z} \right)^{-1}|dz|=\lim_{j\rightarrow+\infty}\frac{1}{\hat K_{D_j}(z_0)}<+\infty.$$
Dividing $\{z\in D:-t_1<2G_D(z,z_0)<-t_0\}$ into finite small domains  and using Lemma \ref{l:extext}, there exist a subsequence of $\{f_{t_j}\}$ (denoted also by $\{f_{t_j}\}$) and an element $f_0\in L^2(\partial D_{t_0})$ satisfying that 
$$\int_{\partial D_{t_0}}\frac{f_{0}(z)}{z-w}dz=\lim_{j\rightarrow+\infty}\int_{\partial D_{t_j}}\frac{f_{t_j}(z)}{z-w}dz=\lim_{j\rightarrow+\infty}f_{t_j}(w),$$ 	 
$$\int_{\partial D_{t_j}}f_{0}(z)\phi(z)dz=\lim_{j\rightarrow+\infty}\int_{\partial D_{t_j}}f_{t_j}(z)\phi(z)dz=0$$
for any holomorphic function $\phi$ on a neighborhood of $\overline D_{t_0}$
and 
\begin{equation}
	\label{eq:06299a}\frac{1}{2\pi}\int_{\partial D_{t_0}}|f_{0}|^2\left(\frac{\partial G_D(z,z_0)}{\partial v_z} \right)^{-1}|dz|\le\lim_{j\rightarrow+\infty}\frac{1}{2\pi}\int_{\partial D_{t_j}}|f_{t_j}|^2\left(\frac{\partial G_D(z,z_0)}{\partial v_z} \right)^{-1}|dz|,
\end{equation}
which shows that there exists an element in $H^2_{(c)}(D_{t_0})$ (denoted also by $f_0$) whose nontangential boundary limit is $f_0$ on $\partial D_{t_0}$ by Lemma \ref{l:0-1}. Then, we have
$$f_0(z_0)=\lim_{j\rightarrow+\infty}f_{t_j}(z_0)=1.$$
Equality \eqref{eq:06299b} and \eqref{eq:06299a} show that 
$$\limsup_{t\rightarrow t_0+0}\hat K_{D_t}(z_0)\le \frac{1}{\frac{1}{2\pi}\int_{\partial D_{t_0}}|f_{0}|^2\left(\frac{\partial G_D(z,z_0)}{\partial v_z} \right)^{-1}|dz|}\le\hat K_{D_{t_0}}(z_0).$$
Combining equality \eqref{eq:0619c}, we have $\lim_{t\rightarrow t_0+0}\hat K_{D_t}(z_0)=\hat K_{D_{t_0}}(z_0)$.

Now, we consider the case $t_0\in -2N$.

Note that $G_D(\cdot,z_0)$ is harmonic on $D\backslash\{z_0\}$. Using Riemann mapping Theorem repeatedly,  we obtain a biholomorphic mapping $\mu: \tilde D\rightarrow D_{t_0}$, where $\tilde D$ is a planar region  bounded by finite analytic Jordan curves and $\tilde z_0\in\tilde D$ satisfying $\mu(\tilde z_0)=z_0$. It is clear that $G_{\tilde D}(\cdot,\tilde z_0)=G_D(\cdot,z_0)\circ\mu+\frac{t_0}{2}$.
	Denote that 
	$$\tilde K(t):=\frac{1}{\inf\left\{\|\tilde f\|^2_{H_{(c)}^2(\tilde D)}:\tilde f\in  H^2_{(c)}(\tilde D)\,\&\,\tilde f(\tilde z_0)=1\right\}}.$$
Then we have
\begin{equation}
	\label{eq:0630d}
	\begin{split}
\frac{\pi}{\hat K_{D_{t_0+t}}(z_0)}&=\inf_{\tilde f\in\mathcal{O}(D_{t_0+t})\,\&\,\tilde f( z_0)=1}\liminf_{B\rightarrow0+0}\frac{1}{B}\int_{\{-t-t_0-B<2G_D(\cdot,z_0)<-t-t_0\}}|\tilde f|^2d\lambda_{D}
		\\&=\inf_{\tilde f\in\mathcal{O}(D_{t_0+t})\,\&\,\tilde f( z_0)=1}\liminf_{B\rightarrow0+0}\frac{1}{B}\int_{\{-t-B<2G_D(\cdot,z_0)\circ\mu+t_0<-t\}}|\tilde f\circ\mu|^2|\mu'|^2d\lambda_{\tilde D}\\
		&=\inf_{\tilde f\in\mathcal{O}(\tilde D_t)\,\&\,\tilde f(\tilde z_0)=\mu'(\tilde z_0)}\liminf_{B\rightarrow0+0}\frac{1}{B}\int_{\{-t-B<2G_{\tilde D}(w,\tilde z_0)<-t\}}|\tilde f|^2d\lambda_{\tilde D}\\
		&=|\mu'(\tilde z_0)|^2\frac{\pi}{\tilde K(t)}
	\end{split}
\end{equation}
for any $t\ge0$ by Proposition \ref{l:OSH=H},
where $\tilde D_t:=\{w\in \tilde D:2G_{\tilde D}(w,\tilde z_0)<-t\}=\mu^{-1}(\{z\in  D:2G_D(z,z_0)<-t_0-t\})$, $d\lambda_{\tilde D}$ and $d\lambda_D$ are the Lebesgue measures on $\tilde D$ and $D$ respectively.

As $\tilde D$ is a planar region  bounded by finite analytic Jordan curves, we get that 
$$\lim_{t\rightarrow0+0}\tilde K(t)=\tilde K(0)$$ by above discussions. Combining equality \eqref{eq:0630d}, we obtain that
$$\lim_{t\rightarrow t_0+0}\hat K_{D_t}(z_0)=\hat K_{D_{t_0}}(z_0).$$

Thus, Lemma \ref{l:l-continous} has been proved.
\end{proof}

\begin{Remark}
Following the discussion in the above proof, we know that Lemma \ref{l:uni} holds for $t\in -2N$.
\end{Remark}

It follows from 
Proposition \ref{p:2} and Proposition \ref{l:OSH=H} that 
\begin{equation}
	\label{eq:0709a}\frac{\int_{t_1}^{t_0}\frac{1}{\hat K_{D_t}(z_0)}dt}{e^{-t_1}-e^{-t_0}}\le \frac{e^{t_0}}{\hat K_{D_{t_0}}(z_0)}
\end{equation}
holds for any $0\le t_1<t_0<+\infty$. The following Proposition gives a necessary condition for inequality \eqref{eq:0709a} becomes an equality.

\begin{Proposition}
	\label{p:3}
	 If inequality \eqref{eq:0709a} becomes an equality, then  
	 there exists a holomorphic function $F$ on $\{2G_D(\cdot,z_0)<-t_1\}$ such that $\frac{1}{\hat K_{D_t}(z_0)}=\|F\|_{H^2_{(c)}(D_t)}^2$ for a.e. $t\in(t_1,t_0)$ and  $F(z_0)=1$. 
\end{Proposition}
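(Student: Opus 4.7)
The plan is to run the construction from the proof of Proposition \ref{p:2} in the planar setting (with $\psi=2G_D(\cdot,z_0)$, $\varphi\equiv0$, $c\equiv1$, and the ideal $\mathcal{F}_{z_0}=\mathfrak{m}_{z_0}$ at $z_0$, which equals $\mathcal{I}(\psi)_{z_0}$ since $\psi=2\log|z-z_0|+$bounded), but in place of a minimizing sequence I would take the \emph{unique} extremal function $f_{t_0}$ at level $t_0$ supplied by Lemma \ref{l:uni} (together with the Remark after Lemma \ref{l:l-continous} in case $t_0\in-2N$). The strategy is to then track every inequality in the proof of Proposition \ref{p:2} and let the assumed equality in \eqref{eq:0709a} force pointwise equality a.e.

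First I would apply Lemma \ref{lem:GZ_sharp2} to $f_{t_0}\,dz$ on $\{\psi<-t_0\}$ with parameters $t_1<t_0$ and $B>0$ to produce a holomorphic $(1,0)$-form $\tilde F_B$ on $D_{t_1}$ with $\tilde F_B-f_{t_0}\,dz\in\mathfrak{m}_{z_0}$ and a quantitative $L^2$ estimate. Using $e^{-\psi+v_{t_0,B}(\psi)}c(-v_{t_0,B}(\psi))\ge1$, and the fact that $1-b_{t_0,B}(\psi)=0$ and $v_{t_0,B}(\psi)=\psi$ on $\{-t_0<\psi<-t_1\}$, the restriction of the estimate to this region becomes
$$\int_{\{-t_0<\psi<-t_1\}}|\tilde F_B|^{2}\le\frac{e^{t_0+B}(e^{-t_1}-e^{-t_0-B})}{B}\int_{\{-t_0-B<\psi<-t_0\}}|f_{t_0}\,dz|^{2}.$$
Letting $B\to0+0$, the right-hand side tends to $(e^{-t_1}-e^{-t_0})\,e^{t_0}\,\|f_{t_0}\,dz\|^{2}_{OSH^{2}(D_{t_0},\psi_{t_0})}=2\pi\,e^{t_0}(e^{-t_1}-e^{-t_0})/\hat K_{D_{t_0}}(z_0)$ by Proposition \ref{l:OSH=H}, and then by the \emph{assumed} equality in \eqref{eq:0709a} it equals exactly $2\pi\int_{t_1}^{t_0}1/\hat K_{D_t}(z_0)\,dt$.

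Next I would invoke Lemma \ref{l:converge} to extract a subsequence $\tilde F_{B_k}$ converging locally uniformly on $D_{t_1}$ to a holomorphic $(1,0)$-form $F=f\,dz$ on $D_{t_1}$ satisfying
$$\int_{\{-t_0<\psi<-t_1\}}|F|^{2}\le 2\pi\int_{t_1}^{t_0}\frac{1}{\hat K_{D_t}(z_0)}\,dt.$$
Since $\tilde F_B-f_{t_0}\,dz\in\mathfrak{m}_{z_0}$ for every $B$, Lemma \ref{closedness} gives $F-f_{t_0}\,dz\in\mathfrak{m}_{z_0}$, whence $f(z_0)=1$. For any $t\in(t_1,t_0)$ one has $\overline{D_t}\subset D_{t_1}$, so $F$ is holomorphic in a neighborhood of $\overline{D_t}$, hence automatically $f|_{D_t}\in H^{2}_{(c)}(D_t)$.

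To conclude, I would use the coarea formula (Lemma \ref{l:coarea}) together with $\partial G_D/\partial v_z=|\nabla G_D|=|\nabla\psi|/2$ (Lemma \ref{l:partial}) to rewrite
$$\int_{\{-t_0<\psi<-t_1\}}|F|^{2}=2\pi\int_{t_1}^{t_0}\|f\|^{2}_{H^{2}_{(c)}(D_t)}\,dt,$$
which gives $\int_{t_1}^{t_0}\|f\|^{2}_{H^{2}_{(c)}(D_t)}\,dt\le\int_{t_1}^{t_0}1/\hat K_{D_t}(z_0)\,dt$. Combined with the reverse pointwise inequality $\|f\|^{2}_{H^{2}_{(c)}(D_t)}\ge1/\hat K_{D_t}(z_0)$, which holds since $f(z_0)=1$, this forces equality $\|F\|^{2}_{H^{2}_{(c)}(D_t)}=1/\hat K_{D_t}(z_0)$ for a.e. $t\in(t_1,t_0)$. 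The only real obstacle is bookkeeping: keeping track of the $(n,0)$-form vs.\ function normalizations and the factor of $2$ coming from $\sqrt{-1}\,dz\wedge d\bar z=2\,dx\wedge dy$ when passing between Proposition \ref{p:2} (stated for $(n,0)$-forms in the $OSH^{2}$ framework) and the classical Hardy norm via Proposition \ref{l:OSH=H}.
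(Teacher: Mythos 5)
Your proposal is correct and follows essentially the same route as the paper: apply Lemma \ref{lem:GZ_sharp2} to the extremal $f_{t_0}$ from Lemma \ref{l:uni}, let $B\rightarrow0+0$, extract a limit $F$ with $F(z_0)=1$ via Lemmas \ref{l:converge} and \ref{closedness}, and let the assumed equality in \eqref{eq:0709a} force $\|F\|^2_{H^2_{(c)}(D_t)}=\frac{1}{\hat K_{D_t}(z_0)}$ for a.e. $t$, your coarea-based final step being only a rephrasing of the paper's characterization via $\liminf_{\epsilon\rightarrow0+0}\frac{1}{\epsilon}\int_{\{-t-\epsilon<\psi<-t\}}|F|^2d\lambda_D$. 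One small point to supply: to invoke Lemma \ref{l:converge} on all of $D_{t_1}$ (not just the annulus) you also need an $L^2$ bound of $\tilde F_B$ near $\{\psi\le -t_0\}$, which follows from the full estimate of Lemma \ref{lem:GZ_sharp2} together with the triangle inequality and $\int_{D_{t_0}}|f_{t_0}|^2d\lambda_D<+\infty$, exactly as in the paper's argument.
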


\begin{proof}Let $f_{t_0}$ be the holomorphic function on $\{2G_D(\cdot,z_0)<-t_0\}$ such that $f_{t_0}(z_0)=1$ and 
	\begin{equation}
		\label{eq:0709b}
		\frac{1}{\hat K_{D_{t
		_0}}(z_0)}=\|f_{t_0}\|_{H^2_{(c)}(D_{t_0})}^2.
	\end{equation}
	Lemma \ref{lem:GZ_sharp2}
shows that for any $B>0$,
there exists a 
holomorphic function $\tilde{F}_{B}$ on $\{2G_D(\cdot,z_0)<-t_1\}$, such that
$\tilde F_{B}(z_0)=1$
and
\begin{equation}
\label{eq:0709c}
\begin{split}
&\int_{\{\psi<-t_1\}}|\tilde{F}_{B}-(1-b_{t_{0},B}(\psi))f_{t_0}|^{2}d\lambda_D\\
\le&\int_{\{\psi<-t_1\}}|\tilde{F}_{B}-(1-b_{t_{0},B}(\psi))f_{t_0}|^{2}e^{-\psi+v_{t_0,B}(\psi)}d\lambda_D
\\\leq&\left(e^{-t_1}-e^{-t_0-B}\right)
\int_{\{\psi<-t_1\}}\frac{1}{B}\mathbb{I}_{\{-t_{0}-B<\psi<-t_{0}\}}|f_{t_0}|^{2}e^{-\psi}d\lambda_D,
\end{split}
\end{equation}
where $\psi:=2G_D(\cdot,z_0)$.
Note that $\liminf_{B\rightarrow0+0}\frac{1}{B}\mathbb{I}_{\{-t_{0}-B<\psi<-t_{0}\}}|f_{t_0}|^{2}e^{-\psi}d\lambda_D<+\infty$ and 
$\liminf_{B\rightarrow0+0}\int_{\{\psi<-t_1\}}|(1-b_{t_{0},B}(\psi))f_{t_0}|^{2}d\lambda_D\le \int_{\{\psi<-t_0\}}|f_{t_0}|^2d\lambda_D<+\infty$, then inequality \eqref{eq:0709c} implies that 
$$\liminf_{B\rightarrow0+0}\int_{\{\psi<-t_1\}}|\tilde F_B|^2d\lambda_D<+\infty.$$
There is a subsequence of $\{\tilde F_B\}_{B>0}$ denoted by $\{\tilde F_{B_j}\}_{j\in\mathbb{Z}_{\ge1}}$ (when $j\rightarrow+\infty$, $B_j\rightarrow0$), which uniformly convergent to a holomorphic function $F$ on $\{\psi<-t_1\}$ on any compact subset of $\{\psi<-t_1\}$. Then we have $F(z_0)=1$, and it follows from inequality \eqref{eq:0709c}, the Fatou's Lemma and Proposition \ref{l:OSH=H} that 
\begin{equation}
	\label{eq:0709d}
	\begin{split}
		&\int_{\{-t_0\le\psi<-t_1\}}|F|^2d\lambda_D+\int_{\{\psi<-t_0\}}|F-f_{t_0}|^2d\lambda_D\\
		\le&\liminf_{j\rightarrow+\infty}\int_{\{\psi<-t_1\}}|\tilde{F}_{B_j}-(1-b_{t_{0},B_j}(\psi))f_{t_0}|^{2}d\lambda_D\\
		\le&\left(e^{-t_1}-e^{-t_0}\right)e^{t_0}
\liminf_{j\rightarrow+\infty}\int_{\{-t_{0}-B_j<\psi<-t_{0}\}}\frac{1}{B_j}|f_{t_0}|^{2}d\lambda_D\\
=&\left(e^{-t_1}-e^{-t_0}\right)e^{t_0}\pi \|f_{t_0}\|_{H^2_{(c)}(D_t)}^2.
	\end{split}
\end{equation}

By Proposition \ref{l:OSH=H}, we have $\frac{1}{\hat K_{D_t}(z_0)}\le \frac{1}{\pi}\liminf_{\epsilon\rightarrow0+0}\frac{1}{\epsilon}\int_{\{-t-\epsilon<\psi<-t\}}| F|^2d\lambda_D$, hence 
$$\int_{t_1}^{t_0}\frac{1}{\hat K_{D_t}(z_0)}dt\le \frac{1}{\pi}\int_{\{-t_0\le\psi<-t_1\}}|F|^2d\lambda_D$$
and the inequality becomes an equality if and only if $$\frac{1}{\hat K_{D_t}(z_0)}= \frac{1}{\pi}\liminf_{\epsilon\rightarrow0+0}\frac{1}{\epsilon}\int_{\{-t-\epsilon<\psi<-t\}}| F|^2d\lambda_D$$ holds a.e. on $(t_1,t_0)$.
As $\frac{\int_{t_1}^{t_0}\frac{1}{\hat K_{D_t}(z_0)}dt}{e^{-t_1}-e^{-t_0}}= \frac{e^{t_0}}{\hat K_{D_{t_0}}(z_0)}$, it follows from equality \eqref{eq:0709b}, inequality \eqref{eq:0709d} and the above characterization that 
$$\frac{1}{\hat K_{D_t}(z_0)}= \frac{1}{\pi}\liminf_{\epsilon\rightarrow0+0}\frac{1}{\epsilon}\int_{\{-t-\epsilon<\psi<-t\}}| F|^2d\lambda_D=\|F\|^2_{H^2_{(c)}(D_t)}$$ holds a.e. on $(t_1,t_0)$.

Thus, Proposition \ref{p:3} holds.	
\end{proof}

\section{Proofs of Theorem \ref{thm1} and Theorem \ref{thm2}}

In this section, we prove Theorem \ref{thm1} and Theorem \ref{thm2}.

\subsection{Proof of Theorem \ref{thm1}}
Firstly, we prove statement $(1)$.

Proposition \ref{p:1} shows that 
$$\frac{\|F_{t_0}\|_{OSH_{\rho}^2(M_{t_0},\psi_{t_0})}^2}{e^{-t_0}c_+(t_0)}\ge\frac{G(t_1)-G(t_0)}{\int_{t_1}^{t_0}c(t)e^{-t}dt}$$ for any $t_0>t_1\ge0$. Following from the concavity of $G(h^{-1}(r))$ and inequality \eqref{eq:0522a2}, we have 
\begin{equation}
	\label{eq:0610b}
	\frac{-G_+'(t_0)}{e^{-t_0}c_+(t_0)}\ge \frac{\|F_{t_0}\|_{OSH_{\rho}^2(M_{t_0},\psi_{t_0})}^2}{e^{-t_0}c_+(t_0)}\ge \frac{G(t_1)-G(t_0)}{\int_{t_1}^{t_0}c(t)e^{-t}dt}\ge \frac{-G_+'(t_1)}{e^{-t_1}c_+(t_1)}, 
\end{equation}
which implies that  $\frac{\|F_{t}\|_{OSH_{\rho}^2(M_{t},\psi_{t})}^2}{e^{-t_0}c_+(t)}$ is increasing on $[0,+\infty)$.

As $G(h^{-1}(r))$ is concave, it is clear that statement $(3)$ can deduce statement $(2)$. Thus, it suffices to prove  statement $(3)$.

Note that $\lim_{t_1\rightarrow t_0-0}c_+(t_1)=c_-(t_0)$, where $c_-(t_0)=\lim_{t\rightarrow t_0-0}c(t)$. Thus, $\frac{-G_+'(t_0)}{e^{-t_0}c_+(t_0)}=\frac{\|F_{t_0}\|_{OSH_{\rho}^2(M_{t_0},\psi_{t_0})}^2}{e^{-t_0}c_+(t_0)}$, i.e. $-G_+'(t_0)=\|F_{t_0}\|_{OSH_{\rho}^2(M_{t_0},\psi_{t_0})}^2$ if the functions $G'_+$ and $c$ are both continuous at $t_0$ according to inequality \eqref{eq:0610b}. As $c(t)e^{-t}$ is decreasing on $(0,+\infty)$ and $G(h^{-1}(r))$ is concave on $[0,\int_0^{+\infty}c(t)e^{-t}]$, we know that functions $G'_+$ and $c$ are both continuous a.e. on $(0,+\infty)$. Thus, Theorem \ref{thm1} holds.

\subsection{Proof of Theorem \ref{thm2}}

In this section, we prove Theorem \ref{thm2}.

\

\emph{Step 1. $(1)\Rightarrow(2)$.}
Denote that 
$$I(t):=\int_{\{\psi<-t\}}|F_0|^2e^{-\varphi}c(-\psi),$$ where $F_0$ is the unique holomorphic $(n,0)$ form on $M$ satisfying that $(F_0-f)\in H^0(Z_0 ,(\mathcal{O} (K_M) \otimes \mathcal{F})|_{Z_0})$ and $G(0)=\int_{ M}|F_0|^2e^{-\varphi}c(-\psi)$. Note that $I(t)$ is decreasing on $[0,+\infty)$ and $\lim_{t\rightarrow+\infty}I(t)=0$, then 
$I'(t)$ exists a.e. on $(0,+\infty)$ and 
\begin{equation}
	\nonumber
	-I'(t)=\lim_{B\rightarrow0+0}\frac{1}{B}\int_{\{-t-B\le\psi<-t\}}|F_0|^2e^{-\varphi}c(-\psi)\ge H(t).
\end{equation}
Thus, combining with statement $(1)$ in Theorem \ref{thm2}, we have
\begin{equation}
	\label{eq:0610c}
	G(0)=I(0)\ge\int_0^{+\infty}-I'(t)dt\ge \int_0^{+\infty}H(t)dt=\int_0^{+\infty}-G_+'(t)dt=G(0),
\end{equation}
where the last ``=" holds by the absolute continuity of $G(t)$.
Inequality \eqref{eq:0610c} implies that $-I'(t)=H(t)$ a.e. on $(0,+\infty)$ and $I(0)=\int_0^{+\infty}-I'(t)dt$, which show that 
\begin{equation}
	\nonumber
	\begin{split}
	\int_{\{\psi<-t\}}|F_0|^2e^{-\varphi}c(-\psi)
	=&I(t)
	=\int_t^{+\infty}-I'(s)ds
	=G(t)
	=\int_{\{\psi<-t\}}|F_t|^2e^{-\varphi}c(-\psi)	
	\end{split}
\end{equation}
for any $t\ge0$. Following from the uniqueness of $F_t$, we have 
$$F_0|_{\{\psi<-t\}}=F_t$$
for any $t\ge0$.
Thus, we have $H(t)=-G'_+(t)=\|F_0\|_{OSH_{\rho}^2(M_{t},\psi_{t})}^2$ for a.e. $t\in(0,+\infty)$.

\

\emph{Step 2. $(2)\Rightarrow(1)$}

\

As $G(t)=\int_{\{\psi<-t\}}|F|^2e^{-\varphi}c(-\psi)$ for any $t\ge0$, then $G_+'(t)=\|F\|_{OSH_{\rho}^2(M_{t},\psi_{t})}^2$ for any $t\in(0,+\infty)$. Combining $H(t)=\|F\|_{OSH_{\rho}^2(M_{t},\psi_{t})}^2$ for a.e. $t\in(0,+\infty)$, we have 
$$H(t)=-G'_+(t)$$
for a.e. $t\in(0,+\infty)$.

Thus, Theorem \ref{thm2} holds.

\section{Proof of Theorem \ref{thm3}}

In this section, we prove Theorem \ref{thm3}.

Following from the monotonicity of $c(t)e^{-t}$, we know that $c_+(t)=c(t)$ a.e. on $(0,+\infty)$.
Note that $OSH_{\rho}^2(M_{t_0},\psi_{t_0})\subset \mathcal{H}^2(M_{t_0},\rho)$ holds for a.e.  $t_0\in(0,+\infty)$ in Theorem \ref{thm3}, then it follows from Proposition \ref{p:2} that inequality 
$$\frac{\int_{t_1}^{t_0}H(t)dt}{\int_{t_1}^{t_0}c(t)e^{-t}dt}\le \frac{e^{t_0}}{c(t_0)}H(t_0)$$
 holds for a.e.  $t_0\in(0,+\infty)$. Thus, Theorem \ref{thm3} holds by the following lemma and $\int_0^{+\infty}H(t)dt\le G(0)<+\infty$.

\begin{Lemma}
	\label{l:concave}Let $a(t)$ and $b(t)$ be nonnegative Lebesgue measurable functions on $(0,+\infty)$, which satisfy that $\int_0^{+\infty}a(t)dt<+\infty$, $\int_0^{+\infty}b(t)dt<+\infty$, $b(t)>0$ a.e. on $(0,+\infty)$ and 
	\begin{equation}
		\label{eq:0615d}\frac{\int_{t_1}^{t_0}a(t)dt}{\int_{t_1}^{t_0}b(t)dt}\le \frac{a(t_0)}{b(t_0)}
	\end{equation}
	holds for a.e. $t_0\in(0,+\infty)$ and any $t_1\in[0,t_0)$.
	 Then $\hat a(\hat b^{-1}(r))$ is concave on $(0,\int_0^{+\infty}b(t)dt]$, where $\hat a(t)=\int_t^{+\infty}a(s)ds$ and $\hat b(t)=\int_t^{+\infty}b(s)ds$ for $t\ge0$. 
\end{Lemma}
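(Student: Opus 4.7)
The plan is to reduce concavity of $\phi(r) := \hat a(\hat b^{-1}(r))$ to a ratio monotonicity via the substitution $r = \hat b(t)$. Since $b > 0$ a.e.\ and $b \in L^1(0,+\infty)$, $\hat b$ is continuous and strictly decreasing, mapping $[0,+\infty)$ onto $(0,\int_0^{+\infty}b\,dt]$, so $\hat b^{-1}\colon (0,\int_0^{+\infty}b\,dt] \to [0,+\infty)$ is well defined and continuous. I would then invoke the three-chord characterization of concavity: $\phi$ is concave on $(0,\int_0^{+\infty}b\,dt]$ iff for all $0 < r_1 < r_2 < r_3 \le \int_0^{+\infty} b\,dt$,
$$\frac{\phi(r_2)-\phi(r_1)}{r_2-r_1} \ge \frac{\phi(r_3)-\phi(r_2)}{r_3-r_2}.$$
Setting $t_i := \hat b^{-1}(r_i)$ (so $t_3 < t_2 < t_1$), the differences become $r_{i+1}-r_i = \int_{t_{i+1}}^{t_i} b\,ds$ and $\phi(r_{i+1})-\phi(r_i) = \int_{t_{i+1}}^{t_i} a\,ds$, converting concavity into
$$\frac{\int_{t_3}^{t_2} a(s)\,ds}{\int_{t_3}^{t_2} b(s)\,ds} \le \frac{\int_{t_2}^{t_1} a(s)\,ds}{\int_{t_2}^{t_1} b(s)\,ds} \qquad (*)$$
for all $0 \le t_3 < t_2 < t_1$.

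The core step will be to show that for each fixed $\sigma \ge 0$, the ratio $q_\sigma(s) := \bigl(\int_\sigma^s a\bigr)/\bigl(\int_\sigma^s b\bigr)$ is non-decreasing in $s \in (\sigma,+\infty)$. Since $b > 0$ a.e.\ forces $\int_\sigma^s b > 0$ for $s > \sigma$, and both numerator and denominator are absolutely continuous in $s$, $q_\sigma$ is absolutely continuous on every compact subinterval of $(\sigma,+\infty)$. A direct differentiation gives
$$q_\sigma'(s) = \frac{b(s)}{\int_\sigma^s b}\left(\frac{a(s)}{b(s)} - q_\sigma(s)\right) \ge 0 \quad \text{for a.e.\ } s,$$
where the inequality is exactly hypothesis \eqref{eq:0615d} applied with $t_1 = \sigma$ and $t_0 = s$. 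Since an absolutely continuous function with a.e.\ nonnegative derivative is non-decreasing, $q_\sigma$ is non-decreasing on $(\sigma,+\infty)$.

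To finish, I would apply the monotonicity with $\sigma = t_3$ to get $q_{t_3}(t_2) \le q_{t_3}(t_1)$, namely
$$\frac{\int_{t_3}^{t_2} a}{\int_{t_3}^{t_2} b} \le \frac{\int_{t_3}^{t_2} a + \int_{t_2}^{t_1} a}{\int_{t_3}^{t_2} b + \int_{t_2}^{t_1} b},$$
and then invoke the elementary mediant inequality (if $A,C \ge 0$, $B,D > 0$ and $A/B \le (A+C)/(B+D)$, then $A/B \le C/D$) to conclude $(*)$. The principal technical point I anticipate is the absolute continuity of $q_\sigma$ on compact subintervals of $(\sigma,+\infty)$, which is precisely what upgrades the a.e.\ differential inequality $q_\sigma' \ge 0$ to genuine monotonicity; once this is in hand, everything else is a transparent change-of-variables calculation.
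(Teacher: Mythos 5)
Your proof is correct, but it follows a genuinely different route from the paper's. You work on the $t$-side: you show that for each fixed $\sigma$ the ratio $q_\sigma(s)=\bigl(\int_\sigma^s a\bigr)/\bigl(\int_\sigma^s b\bigr)$ is absolutely continuous on compact subintervals of $(\sigma,+\infty)$ (the denominator is bounded away from zero there because $b>0$ a.e.), so the a.e. differential inequality coming from \eqref{eq:0615d} upgrades to genuine monotonicity of $q_\sigma$; the three-chord characterization of concavity plus the mediant inequality then finish the argument. The paper instead passes immediately to $g(r)=\hat a(\hat b^{-1}(r))$, translates \eqref{eq:0615d} into the one-sided chord--derivative inequality $\frac{g(r_1)-g(r_0)}{r_1-r_0}\le g'(r_0)$ for a.e. $r_0$, and proves concavity by contradiction: it subtracts an affine function, takes an interior minimum point $r_5$ of the correction $\tilde g$, and uses the fact that an increasing function dominates the integral of its a.e. derivative to contradict the minimality. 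Your route buys transparency: every differentiation is performed on an absolutely continuous function, so no contradiction argument or minimum-point analysis is needed, and you obtain the slightly stronger intermediate fact that the partial-integral ratio is monotone; the paper's route stays in the variable $r$ in which the concavity statement is phrased (matching the formulation $G(h^{-1}(r))$ used throughout) at the cost of the more delicate contradiction step. The only points worth making explicit in a write-up are the ones you already flagged: the a.e. quotient-rule computation of $q_\sigma'$ (valid at a.e. $s$ where $A'=a$, $B'=b$ and $b(s)>0$) and the elementary verification of the mediant implication $A/B\le (A+C)/(B+D)\Rightarrow A/B\le C/D$ for $B,D>0$.
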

\begin{proof}
	As $\hat a'(t)=a(t)$ and $\hat b'(t)=b(t)$ a.e. on $(0,+\infty)$, inequality \eqref{eq:0615d} implies that
	\begin{equation}
		\nonumber
		\frac{\hat a(t_1)-\hat a(t_0)}{\hat b(t_1)-\hat b(t_0)}\le \frac{\hat a'(t_0)}{\hat b'(t_0)}
	\end{equation}
	holds for a.e. $t_0\in(0,+\infty)$ and any $t_1\in[0,t_0)$. Let $r_1=\hat b(t_1)$ and $r_0=\hat b(t_0)$, and denote that $g(r):=\hat a(\hat b(r))$, then we have
	\begin{equation}
		\label{eq:0615e}
		\frac{g(r_1)-g(r_0)}{r_1-r_0}\le g'(r_0)
	\end{equation}
	holds for a.e. $r_0\in(0,\int_0^{+\infty}b(t)dt)$ and any $r_1\in (r_0,\int_0^{+\infty}b(t)dt]$.
	
	We prove the concavity of $g(r)$ by contradiction: if not, there exists $0< r_2<r_3<r_4\le\int_0^{+\infty}b(t)dt$ such that
	\begin{equation}
		\nonumber
		\frac{g(r_3)-g(r_2)}{r_3-r_2}<\frac{g(r_4)-g(r_2)}{r_4-r_2}<\frac{g(r_4)-g(r_3)}{r_4-r_3}.
	\end{equation}
	Consider 
	$$\tilde g(r)=g(r)-g(r_4)-\frac{g(r_4)-g(r_2)}{r_4-r_2}(r-r_4)$$ on $(0,\int_0^{+\infty}b(t)dt]$. As $g(r)$ is continuous  on $(0,\int_0^{+\infty}b(t)dt]$, then $\tilde g(r)$ is continuous  on $(0,\int_0^{+\infty}b(t)dt]$. Note that $\tilde{g}(r_2)=\tilde{g}(r_4)=0$ and $\tilde{g}(r_3)<0$, then it follows from the continuity of $\tilde{g}(r)$ that there exists $r_5\in(r_2,r_4)$ such that 
	$$\tilde{g}(r_5)=\inf_{r\in[r_2,r_4]}\tilde{g}(r)<0.$$ 
	It is clear that there exists  small enough $\epsilon>0$ such that  
	$$\frac{\tilde{g}(r)-\tilde{g}(r_4)}{r-r_4}>0$$
	for any $r\in(r_5-\epsilon,r_5+\epsilon)\subset(r_2,r_4)$, which implies that
	$$ g'(r)\ge  \frac{{g}(r)-{g}(r_4)}{r-r_4}>\frac{g(r_4)-g(r_2)}{r_4-r_2}$$
	for a.e. $r\in(r_5-\epsilon,r_5+\epsilon)$ by inequality \eqref{eq:0615e}. As $g$ is increasing on $(0,\int_0^{+\infty}b(t)dt]$, we have 
	\begin{equation}
		\nonumber
			g(r_5)-g(r_5-\epsilon)\ge\int_{r_5-\epsilon}^{r_5}g'(r)dr
			>\epsilon \frac{g(r_4)-g(r_2)}{r_4-r_2},
	\end{equation}
	i.e., $\tilde g(r_5)>\tilde  g(r_5-\epsilon)$, which contradicts to $\tilde{g}(r_5)=\inf_{r\in[r_2,r_4]}\tilde{g}(r)$.
	
	 Thus, Lemma \ref{l:concave} holds.
	 \end{proof}

\section{Proof of Corollary \ref{c:2}}

In this section, we prove Corollary \ref{c:2} in three steps.

\

\emph{Step 1. $\hat K_{D_t}(z_0)e^{-t}$ is decreasing.}
\

For any $t\ge0,$ note that for any holomorphic $f$ on $D_t$, if there exists $B>0$ such that $\int_{\{-B-t<2G_D(\cdot,z_0)<-t\}}|f|^2d\lambda_D<+\infty$ then $\int_{D_t}|f|^2d\lambda_D<+\infty$.

It follows from Proposition  \ref{l:OSH=H} that 
\begin{equation}
	\nonumber
	\begin{split}
		\frac{1}{\hat K_{D_t}(z_0)}=&\inf\left\{\|\tilde f\|^2_{H^2_{(c)}(D_t)}:\tilde f\in \mathcal{O}(D_t)\,\&\,\tilde f(z_0)=1 \right\}\\
		=&\frac{1}{\pi}\inf\left\{\liminf_{B\rightarrow0+0}\frac{1}{B}\int_{\{-B-t<2G_D(\cdot,z_0)<-t\}}|f|^2d\lambda_D:\tilde f\in \mathcal{O}(D_t)\,\&\,\tilde f(z_0)=1 \right\}.
	\end{split}
\end{equation}
Lemma \ref{l:l-continous} implies that $\frac{1}{\hat K_{D_t}(z_0)}$ is a Lebesgue measurable function with respect to $t\in[0,+\infty)$. Then Theorem \ref{thm3} tell us that $\int_{-\log r}^{+\infty}\frac{1}{\hat K_{D_t}(z_0)}dt$ is concave with respect to $r\in [0,\int_0^{+\infty}c(t)e^{-t}dt]$. By Lemma \ref{l:l-continous}, $\frac{1}{\hat K_{D_t}(z_0)}=\lim_{t_1\rightarrow t+0}\frac{1}{\hat K_{D_{t_1}}(z_0)}$. Thus, we get that $\hat K_{D_t}(z_0)e^{-t}$ is decreasing with respect to $t\in [0,+\infty)$.

\

\emph{Step 2. $(3)\Rightarrow(1)\text{ and } (2)$.}

Assume that $D$ is simply connected, then there is a biholomorphic map $p:\Delta \rightarrow D$ such that $p(o)=z_0$. Following from Lemma \ref{l:partial},  we have 
\begin{equation}
	\nonumber
	\begin{split}
		\frac{\partial G_{\Delta_t}(\cdot,o)}{\partial v_w}&=\frac{\partial (G_{D_t}(\cdot,z_0)\circ p)}{\partial v_w}=|\nabla (G_{D_t}(\cdot,z_0)\circ p)|\\
		&=|\nabla G_{D_t}(\cdot,z_0)\circ p\cdot\nabla p|=\frac{\partial G_{D_t}(\cdot,z_0)}{\partial v_z}\circ p\cdot |p'|,
	\end{split}
\end{equation}
where $\Delta_t:=\{z\in\mathbb{C}:2\log|z|<-t\}$ for any $t\ge0$.
 We have 
\begin{equation}
	\nonumber
	\begin{split}
		&\frac{1}{\hat K_{D_t}(z_0)}\\
		=&\inf\left\{\frac{1}{2\pi}\int_{\partial D_t}|\tilde f(z)|^2\left(\frac{\partial G_{D_t}(\cdot,z_0)}{\partial v_z} \right)^{-1}|dz|:\tilde f(z)\in H^2_{(c)}(D_t)\,\&\, \tilde f(z_0)=1\right\}\\
		=&\inf\left\{\frac{1}{2\pi}\int_{\partial \Delta_t}|\tilde f(w)|^2\left(\frac{\partial G_{\Delta_t}(\cdot,o)}{\partial v_w} \right)^{-1}|p'(w)|^2|dw|:\tilde f(w)\in H^2_{(c)}(\Delta_t)\,\&\, \tilde f(o)=1\right\}\\
		=&\frac{|p'(o)|^2}{\hat K_{\Delta_t}(o)}
	\end{split}
\end{equation}
and
\begin{equation}
	\nonumber
	\begin{split}
		\frac{1}{B_{D_t}(z_0,z_0)}
		=&\inf\left\{\int_{D_t}|\tilde f|^2d\lambda_{D} :\tilde f\in\mathcal{O}(D_t)\,\&\,\tilde f(z_0)=1 \right\}\\
		=&\inf\left\{\int_{D_t}|\tilde f|^2|p'|^2d\lambda_{\Delta} :\tilde f\in\mathcal{O}(\Delta_t)\,\&\,\tilde f(o)=1 \right\}\\
		=&\frac{|p'(o)|^2}{B_{\Delta_t}(o,o)},
	\end{split}
\end{equation}
where $d\lambda_D$ and $d\lambda_{\Delta}$ are the Lebesgue measures on $D$ and $\Delta$ respectively.
It is clear that $\hat K_{\Delta_t}(o)=e^{t}$ and $B_{\Delta_t}(o,o)=\frac{ e^{t}}{\pi}$, thus statements $(1)$ and $(2)$ hold.

\

\emph{Step 3. $(2)\Rightarrow(1)$}
\

Assume that $\hat{K}_{D_t}(z_0)e^{-t}=c$ is a constant function on $[0,+\infty)$. Hence, we have $\frac{\int_{t_1}^{t_0}\frac{\pi}{\hat{K}_{D_t}(z_0)}dt}{e^{-t_1}-e^{-t_0}}=\frac{\pi}{c}=e^{t_0}\frac{\pi}{\hat{K}_{D_{t_0}}(z_0)}$ for any $0\le t_1<t_0<+\infty$. By Proposition \ref{p:3}, there exists a holomorphic function $F$ on $D$ such that $$\frac{1}{\hat{K}_{D_t}(z_0)}=\frac{1}{2\pi}\int_{\partial D_t}|F|^2\left(\frac{\partial G_{D}(\cdot,z_0)}{\partial v_z}\right)^{-1}|dz|$$ 
a.e. on $[0,+\infty)$, and $F(z_0)=1$.

For any $f\in\mathcal{O}(\{2G_D(\cdot,z_0)<-t\})$ satisfying $\int_{\{2G_D(\cdot,z_0)<-t\}}|f|^2<+\infty$ and $\tilde f(z_0)=1$, we have 
$$\frac{1}{2\pi}\int_{\partial D_t}|\tilde f|^2\left(\frac{\partial G_{D}(\cdot,z_0)}{\partial v_z}\right)^{-1}|dz|\ge\frac{1}{\hat{K}_{D_t}(z_0)},$$
which implies that
\begin{equation}
	\nonumber
	\begin{split}
		\int_{D_t}|\tilde f|^2d\lambda_D&=\int_{-\infty}^{-t}\int_{D_s}|\tilde f|^2\left(\frac{\partial 2G_{D}(\cdot,z_0)}{\partial v_z}\right)^{-1}|dz|ds\\
		&\ge \int_{-\infty}^{-t}\frac{\pi}{\hat K_{D_s}(z_0)} ds\\
		&=\int_{-\infty}^{-t}\int_{D_s}|F|^2\left(\frac{\partial 2G_{D}(\cdot,z_0)}{\partial v_z}\right)^{-1}|dz|ds\\
		&=	\int_{D_t}|F|^2d\lambda_D
			\end{split}
\end{equation}
according to Lemma \ref{l:coarea}. Then we have 
\begin{equation}
	\nonumber
	\begin{split}
		&\int_{\{2G_{D}(\cdot,z_0)<-t\}}|F|^2d\lambda_D\\
		=&\inf\left\{\int_{\{2G_D(\cdot,z_0)<-t\}}|\tilde f|^2d\lambda_D:\mathcal{O}(\{2G_D(\cdot,z_0)<-t\})\,\&\,\tilde f(z_0)=1\right\}
	\end{split}
\end{equation}
for any $t\ge0$, which deduces that statement $(1)$ holds by Theorem \ref{thm2}.

\

\emph{Step 4. $(1)\Rightarrow(3)$}
\

Assume that statement $(1)$ holds, then there exists a holomorphic function $F$ on $D$ such that 
\begin{equation}
	\nonumber\begin{split}
		G(t)&:=\inf\left\{\int_{\{2G_D(\cdot,z_0)<-t\}}|\tilde f|^2d\lambda_D:\mathcal{O}(\{2G_D(\cdot,z_0)<-t\})\,\&\,\tilde f(z_0)=1\right\}\\
		&=\int_{\{2G_{D}(\cdot,z_0)<-t\}}|F|^2
	\end{split}
\end{equation}
 for any $t\ge0$ and $F(z_0)=1$ according to Theorem \ref{thm2}.

Let $p:\Delta\rightarrow D$ be the universal covering map, then there is a holomorphic function $f_{z_0}$ on $\Delta$ such that $|f_{z_0}|=p^*\left(e^{G_D(\cdot,z_0)}\right)$. 
Take $t_0$ large enough such that $D_{t_0}$ is simply connected. By the uniqueness property of the minimal function, we have $F=c\frac{dp_*(f_{z_0})}{dz}$ on $D_{t_0}$, where $c$ is a constant and $p_*(f_{z_0})$ is well-defined on $D_{t_0}$. By the uniqueness property of holomorphic form, we have $p^*(Fdz)=cdf_{z_0}$ on $\Delta$. Hence, we get that $p_*(f_{z_0})$ is single-valued on $D$, which shows that $D$ is simply connected (see \cite{suita72}).

Thus, Corollary \ref{c:2} holds.

%%%------------------------------------------------------------------------

\vspace{.1in} {\em Acknowledgements}. 
The authors would like to thank Dr. Shijie Bao and Dr. Zhitong Mi for checking the manuscript and providing some useful suggestions.  The first named author was supported by National Key R\&D Program of China 2021YFA1003100 and NSFC-11825101.

\bibliographystyle{references}
\bibliography{xbib}

\end{document}